\documentclass{amsart}

\usepackage{amsmath, amsthm, amssymb, amscd}

\usepackage{enumerate}      
\usepackage{stmaryrd}       
\usepackage{xspace}         
\usepackage{verbatim}       
\usepackage{url}            

\usepackage[dvipsnames]{xcolor}

\usepackage{tikz}
\usepackage{tikz-cd}
\usetikzlibrary{
  arrows.meta,               
  decorations.pathmorphing,  
  decorations.markings,      
  positioning                
}
\usetikzlibrary{decorations.pathreplacing}
\usetikzlibrary{patterns}
\usepackage[colorlinks=true,
            linkcolor=blue,
            citecolor=ForestGreen,
            urlcolor=BrickRed,
            bookmarksopen=true,
            bookmarksdepth=3]{hyperref}
\usepackage[active]{srcltx}
\usepackage[all]{xypic}
\SelectTips{cm}{}

{

   \newtheorem{theorem}[subsubsection]{Theorem}
      \newtheorem*{theorem*}{Theorem}
   \newtheorem{proposition}[subsubsection]{Proposition}

   \newtheorem{lemma}[subsubsection]{Lemma}

   \newtheorem{corollary}[subsubsection]{Corollary}
   
   \newtheorem*{conjecture*}{Conjecture}

}
{\theoremstyle{definition}
          \newtheorem*{exercise*}{Exercise}
   
   \newtheorem{example}[subsubsection]{Example}
   \newtheorem*{example*}{Example}
   \newtheorem{definition}[subsubsection]{Definition}
   
   \newtheorem*{definition*}{Definition}
   
   \newtheorem{remark}[subsubsection]{Remark}

}
\newcommand{\QQ}{{\mathbb{Q}}}
\newcommand{\NN}{{\mathbb{N}}}

\newcommand{\PP}{{\mathbb{P}}}
\newcommand{\ZZ}{{\mathbb{Z}}}

\renewcommand{\AA}{{\mathbb{A}}}

\newcommand{\bA}{{\mathbf{A}}}

\newcommand{\cA}{{\mathcal A}}

\renewcommand{\cD}{{\mathcal D}}

\newcommand{\cI}{{\mathcal I}}
\newcommand{\cJ}{{\mathcal J}}

\newcommand{\cO}{{\mathcal O}}

\renewcommand{\cR}{{\mathcal R}}

\def\<{\langle}
\def\>{\rangle}

\newcommand{\ext}{{\operatorname{ext}}}
\newcommand{\inn}{\operatorname{in}}

\newcommand{\Spec}{\operatorname{Spec}}

\newcommand{\cProj}{{{\mathcal P}roj}}

\newcommand{\Sing}{{\operatorname{Sing}}}

\newcommand{\Gal}{{\operatorname{Gal}}}

\newcommand{\divv}{\operatorname{div}}

\newcommand{\lcm}{{\operatorname{lcm}}}

\newcommand{\red}{{\operatorname{red}}}

\def\:{{\colon}}
\def\.{{,\dots,}}

\newcommand{\double}{\genfrac..{0pt}1
{\raise -1pt\hbox{$\scriptstyle\longrightarrow$}}{\raise 3pt\hbox
{$\scriptstyle\longrightarrow$}}}

\renewcommand{\setminus}{\smallsetminus}

\def\tototi{\mathbin{\mathop{\otimes}\limits^{\raise-1pt\hbox
{$\scriptscriptstyle {\rm L}$}}}}

\def\indlim{\mathop{\vrule width0pt height7pt depth
4pt\smash{\lim\limits_{\raise 1pt\hbox to 14.5pt
{\rightarrowfill}}}}}
\def\projlim{\mathop{\vrule width0pt height7pt depth
4pt\smash{\lim\limits_{\raise 1pt\hbox to 14.5pt
{\leftarrowfill}}}}}

\newcommand\displaceamount{3pt}

\newcommand{\doubledown}{\ar@<\displaceamount>[d]\ar@<-\displaceamount>[d]}

\newcommand{\doubleup}{\ar@<\displaceamount>[u]\ar@<-\displaceamount>[u]}

\newcommand{\doubleright}{\ar@<\displaceamount>[r]\ar@<-\displaceamount>[r]}

\newcommand{\ch}{{\operatorname{char}}}

\newcommand{\ord}{{\operatorname{ord}}}
\newcommand{\rord}{{\operatorname{rord}}}

\newcommand{\gr}{{\operatorname{gr}}}

\newcommand{\inv}{{\operatorname{inv}}}

\newcommand{\inte}{{\operatorname{int}}}

\newcommand{\init}{\operatorname{in}} 

\title{Resolution Except for the Normal-Crossing Locus and Galois actions}
\begin{document}


%

\author[J. W{\l}odarczyk] {Jaros{\l}aw W{\l}odarczyk}
\address{Department of Mathematics, Purdue University\\
150 N. University Street,\\ West Lafayette, IN 47907-2067}
\email{wlodarcz@purdue.edu}

\thanks{This research is supported by  BSF grant 2014365 and Simons Foundation grant  MPS-TSM-00008103} 

\date{\today}

\begin{abstract}
In characteristic zero, we construct a canonical, functorial resolution algorithm by weighted blow-ups that strictly preserves the normal crossings (nc) locus, effectively answering Koll\'ar's Problem~9. Operating in full generality, our approach handles both reduced and non-reduced nc singularities alongside the simple normal crossings (snc) exceptional divisor setup, terminating with a normal crossings Deligne-Mumford stack.

The resolution is governed by two fundamental geometric properties: the openness of the nc locus and the topological rigidity of canonical maximal admissible weighted centers (the Center Theorem). We establish these via a direct Galois-theoretic analysis of splitting forms. By viewing general nc singularities as quotients of \'etale-local snc singularities by finite Galois groups permuting their branches, we reveal the intrinsic necessity of weighted blow-ups and stack structures.

Building on the framework of \cite{ATW-weighted} and its logarithmic refinement in \cite{W22,W23}, this structural mechanism yields a robust resolution algorithm. Applications include a canonical compactification of nc Deligne-Mumford stacks and a functorial nc-preserving resolution of subvarieties and stacks.
\end{abstract}\maketitle
\setcounter{tocdepth}{1}
\setcounter{tocdepth}{2}
\tableofcontents

\section{Introduction}

Hironaka's theorem (1964) shows that every algebraic variety over a field of characteristic zero admits a resolution of singularities, and that the exceptional divisor can be arranged to have simple normal crossings (snc). Subsequent functorial algorithms refined this statement by producing simplified canonical resolutions compatible with smooth morphisms.

However, classical desingularization procedures do not distinguish between genuinely singular points and those that already possess a normal crossings structure. Preserving this structure is geometrically and birationally essential: normal crossings (nc) singularities naturally behave as terminal objects of desingularization theory. They arise as boundary strata in moduli problems (e.g., stable curves, KSBA compactifications), in Hodge theory (semistable reduction and degenerations), and in birational geometry (log pairs $(X,D)$ in the Minimal Model Program). Blowing them up further obscures their intrinsic geometry rather than simplifying it.

The guiding principle of this work is that the normal crossings locus should remain strictly fixed by a canonical desingularization procedure. We therefore seek a resolution algorithm that removes all singularities outside the normal crossings locus while leaving that locus untouched, leading to a notion of \emph{resolution up to normal crossings}. 

The immense topological obstacles to this goal were prominently highlighted in the foundational work of Koll\'ar~\cite{Kollar}. Recognizing that blowing up normal crossings obscures intrinsic structure, he posed Problem~9 in~\cite{Kol08}, asking for the description of the smallest class of singularities that must necessarily be admitted if one requires the birational morphism to be an exact isomorphism over the pre-existing nc locus.

A first version of embedded resolution of divisors except for simple normal crossings (snc) appeared in the work of Szab\'o~\cite{Sza94}. More general and canonical resolutions except for SNC singularities were later established across a series of papers (cf.\ \cite{BM12,BLM12,BDMP14,BBL22}), where the desingularization invariant is refined to avoid blowing up snc points and to isolate minimal residual singularities. In that setting, the splitting of components is already visible Zariski-locally. More generally, resolution except for \emph{locally toric} singularities was obtained in~\cite{W22t} using logarithmic and combinatorial methods.

The problem treated here builds on these developments but confronts a fundamentally different geometric obstruction. For general nc singularities, the splitting of branches into distinct, smooth components may occur only after passing to an \'etale or Galois extension. As Koll\'ar's insights suggest, attempting to strictly preserve general nc singularities within the rigid, classical category of schemes invariably leads to a deadlock. The resulting algorithmic sequences generate limiting configurations---minimal singularities---which cannot be lowered further by classical blow-ups without destroying the nc locus itself, and thus must remain intact as inescapable terminal objects. 

Because the nc condition is intrinsically non-local, bypassing this topological deadlock forces the introduction of stack-theoretic methods and more flexible birational transformations, such as weighted blow-ups. This provides a natural framework: a resolution procedure that preserves the nc locus and separates the fused components may be constructed on Galois covering spaces, and seamlessly descended back to the base space via stack-theoretic operations. 

The purpose of this article is to construct a canonical resolution algorithm that completely removes all singularities outside the nc locus while flawlessly preserving every pre-existing nc structure, effectively answering Koll\'ar's problem.

\subsection{Related Work}

During the final preparation of this manuscript, the preprint \cite{BB26} appeared. Shortly thereafter, the brief note \cite{AT26} and later \cite{BBB26} were posted.

In \cite{BB26} and \cite{BBB26}, a version of partial desingularization is obtained for reduced normal crossings singularities using a formal splitting theorem together with a combinatorial analysis of group-circulant singularities. In \cite{AT26}, an abstract framework is outlined for the reduced case based on axiomatic conditions requiring that the class of singularities have ``open support'' and be ``inv-closed.''

While the approach in \cite{BB26} relies on different, external splitting machinery, the strategy sketched in \cite{AT26} essentially converges toward the same inductive principles of openness and rigidity (the Center Theorem) established in this paper. Furthermore, the underlying proof structure proposed in \cite{AT26} specifically the reduction to the splitting of initial forms via the normal cone structurally mirrors the core arguments detailed in our proof.

However, by natively relying on a direct Galois-theoretic analysis, the present article provides a self-contained, elementary treatment that naturally encompasses the general case of non-reduced NC singularities and the SNC setup for exceptional divisors.

\subsection{The General Resolution Principle and the Center Theorem}

Let $X^{\mathrm{nc}}(\mathcal{I})$ denote the locus of points on a smooth variety $X$ where an ideal $\mathcal{I}$ (reduced or not) has at most normal crossings singularities.

The construction of an NC-preserving algorithm rests on two fundamental geometric ingredients. First, we utilize the weighted blow-up resolution framework of McQuillan and Abramovich--Temkin--W\l odarczyk \cite{ATW-weighted, Marzo-McQuillan}, together with its logarithmic refinement developed in \cite{W22, W23}. The centers are chosen as canonical maximal admissible centers supported on the maximal locus of the logarithmic invariant $\mathrm{inv}_{(X,E)}$ \cite{W22, W23}, which refines the invariant of \cite{ATW-weighted}.

Second and this is the key structural property we prove that such canonical weighted centers are topologically rigid with respect to the nc locus. Unlike classical smooth centers, they cannot intersect $X^{\mathrm{nc}}(\mathcal{I})$ partially.

\begin{theorem}\label{center} (Center Theorem)
A canonical maximal admissible weighted center intersects the nc locus $X^{\mathrm{nc}}(\mathcal{I})$ if and only if it is entirely contained in it.
\end{theorem}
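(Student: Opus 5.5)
The argument reduces, étale-locally, to a statement about the invariant: the invariant $\mathrm{inv}_{(X,E)}$ is constant along the canonical center $J$, and an nc point is detected by its invariant together with the splitting of the leading forms. Let $p\in V(J)\cap X^{\mathrm{nc}}(\mathcal I)$ and let $q$ be any other point of the support $V(J)$. Both lie in the maximal locus of $\mathrm{inv}_{(X,E)}$, so $\mathrm{inv}(p)=\mathrm{inv}(q)$. We must show that $q$ is an nc point. Since $V(J)$ is connected locally and the nc locus is open (we prove openness first, or alongside), it suffices to show that the nc points of $V(J)$ form a closed subset of $V(J)$. Equivalently, we show that every point of $V(J)$ near an nc point, and every point specializing to one, is nc. Both statements are étale-local.

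\textbf{Step 1: the form of an nc singularity.} After an étale base change and passage to a Galois extension splitting the branches, an nc point $p$ has the form $\mathcal I=(\prod_i x_i^{a_i})$ in suitable coordinates, intersected with the snc divisor $E$. The Galois group $G$ permutes the $x_i$ while preserving the exponents. For such a monomial ideal, the invariant and the canonical weighted center are computed explicitly, and both are $G$-stable. The center is $J=(x_1^{1/w_1},\dots,x_k^{1/w_k},\dots)$, with weights determined by the $a_i$, possibly combined with the divisorial coordinates of $E$. Its support is the coordinate stratum $\{x_1=\dots=x_k=0\}$. Every point of this stratum is again nc: the defining monomial keeps the same shape along the stratum, since the variables not in the center are units or unchanged. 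This shows that the nc points of $V(J)$ form an open subset.

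\textbf{Step 2: closedness, the main obstacle.} Let $q\in V(J)$ be a specialization of nc points of $V(J)$. We must show that $q$ is itself nc. Work at $q$, with the center $J=(u_1^{1/w_1},\dots,u_k^{1/w_k})$ in maximal-contact coordinates. Admissibility and maximality imply that $\mathcal I$ has a weighted initial form $\mathrm{in}_J(\mathcal I)$ on the weighted normal cone. This form is homogeneous in the $u_j$, and since the invariant is constant along $V(J)$, it is independent, up to a unit, of the parameters along $V(J)$. At the nearby nc points, Step 1 shows that the initial form splits as a product of linear forms in the $u_j$ with multiplicities $a_i$, permuted by Galois. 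The splitting of a form into linear factors is a closed condition, and the form is constant along $V(J)$. Hence $\mathrm{in}_J(\mathcal I)$ at $q$ also splits, over the étale or Galois extension, into independent linear factors $\prod \ell_i^{a_i}$. Independence follows because the number of variables involved is read from the invariant.

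\textbf{Step 3: lift the splitting.} Next we lift the splitting of the initial form to a splitting of $\mathcal I$ itself. This is a Hensel-type argument for coprime factors in the completed local ring. The factors are distinct, and independent linear forms are pairwise coprime, so the factorization of the leading form lifts to $f=\prod g_i^{a_i}$. Here the $g_i$ have independent linear parts, so they form part of a coordinate system. In the non-reduced case the lift is applied to $f^{\mathrm{red}}$-type factors, or to the leading-form decomposition with multiplicities. The Galois group permutes the factors, so by uniqueness of the lift the lifted factorization is Galois-equivariant, and it descends to show that $q$ is nc. Transversality to $E$ is handled in the same way, using that the logarithmic invariant records the divisorial coordinates.

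I expect the obstacle to be the second half of Step 2 together with Step 3: proving that constancy of the invariant forces the initial form to be locally constant along $V(J)$, and that the Hensel lift respects the multiplicities $a_i$ in the non-reduced case.
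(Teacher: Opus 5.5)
\textbf{The gap is in Step~3.} This is exactly the step where the nc hypothesis has to do its work, and your argument does not use it there.

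A factorization $\mathrm{in}_J(f)=\prod_i \ell_i^{a_i}$ into independent, pairwise coprime linear forms does \emph{not} lift to $f=\prod_i g_i^{a_i}$ by a Hensel argument, for two reasons. First, the graded Hensel lemma for $F=G_0H_0$ needs $(G_0,H_0)$ to contain all forms of large degree. This fails for $(x_1,\,x_2\cdots x_k)$, and for any two-group split, once $k\ge 3$. Second, even when a lift $f=\prod G_i$ with $\mathrm{in}(G_i)=\ell_i^{a_i}$ exists, nothing forces $G_i$ to be an $a_i$-th power.

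Two examples show this concretely. The paper's own $xyz+x^4+y^4+z^4$ has maximal admissible center $(x,y,z)^3$ and SNC initial form $xyz$, but an isolated singularity. Similarly, $y(x^2+y^3)$ has maximal admissible center $(x,y)^3$ and initial form $x^2y$. Multiply either by a line $\mathbb A^1_w$: the center becomes the $w$-axis, and the initial form is split with independent factors at every point of it, yet $f$ is nc nowhere on it. Your Steps~2--3 use the nearby nc points only to split the initial form, so the inference of Step~3 would make these examples nc.

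What is missing is a mechanism that transports the nc structure of the \emph{whole series}, not just its leading form, from $\eta_J$ to $q$. The paper does this in the completion $\mathcal O_{V(J)}[[x_1,\dots,x_k]]$ along the entire center:
\begin{enumerate}
\item generic nc gives an SNC factorization over $\overline{\kappa(\eta_J)}[[x]]$ (Lemma~\ref{generic});
\item SNC-resolvability is a condition on which coefficients $c_\alpha\in\mathcal O(V(J))$ vanish, so it holds over $\mathcal O(V(J))$ once it holds generically, and the coordinate-change algorithm then yields a formal factorization there (Proposition~\ref{SNC});
\item Artin approximation makes this factorization \'etale-local (Lemma~\ref{crucial}).
\end{enumerate}

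\textbf{Step~2 also needs repair.} The initial form is not independent of the parameters along $V(J)$ in the given coordinates: already $x^2-(1+z)y^2$ varies along the $z$-axis. Moreover, knowing that $\overline H_q$ splits over $\overline{\kappa(q)}$ is weaker than what the lifting needs. The lifting needs the splitting algebra $R_H$ of $H$ to be \'etale over $q$, so that $f$ acquires a pre-SNC presentation \'etale-locally at $q$.

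Your combination of ``splitting is a closed condition'' with ``coinciding or dependent factors would raise $\mathrm{inv}^1_q$'' does give independence of the reduced factors at $q$. This is the paper's implication $(2)\Rightarrow(4)$ in Proposition~\ref{main}. You must then still deduce unramifiedness. The paper does this in Proposition~\ref{main0}, using generic linear combinations of the root coefficients whose reductions at $q$ are separable.

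Finally, you should first cut down by a maximal contact $y_1,\dots,y_r\in\mathcal I$ to the principal, equal-weight case $J=(x_1,\dots,x_k)^d$ (Lemma~\ref{etale2}). Your Step~1 treats only principal ideals, whereas nc points of codimension $>1$ have the form $(x_1,\dots,x_{k-1},f)$.
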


The proof of this rigidity forms the main technical part of the paper. Crucially, the combination of this rigidity and the openness of the normal crossings locus provides a robust, abstract mechanism for partial desingularization. We can encapsulate this in the following structural guarantee:

\medskip
\noindent \textbf{General Resolution Principle.} \emph{Let $\mathcal{C}$ be any class of singularities on a smooth variety $X$ that is open and satisfies the Center Theorem (i.e., any canonical maximal admissible center intersecting the locus of $\mathcal{C}$ is entirely contained within it). Then the functorial desingularization algorithm automatically yields a canonical resolution up to the class $\mathcal{C}$.}
\medskip

It is vital to emphasize that the success of this principle relies entirely on the precise geometric nature of the desingularization invariant. The canonical logarithmic invariant $\mathrm{inv}$ developed in \cite{W22, W23} and generalizing the invariant of \cite{ATW-weighted} is uniquely equipped for this problem, satisfying the Center Theorem for the class of nc singularities adapted to an SNC exceptional divisor, as it is natively compatible with the normal crossings structure.

By applying this principle to the class $\mathcal{C} = X^{\mathrm{nc}}(\mathcal{I})$ using our specific invariant $\mathrm{inv}$, the NC-preserving algorithm proceeds seamlessly:
\begin{itemize}
  \item By the Openness property, the nc locus $X^{\mathrm{nc}}(\mathcal{I})$ is an open subset, making the locus of worse singularities $X \setminus X^{\mathrm{nc}}(\mathcal{I})$ a closed set.
  \item At each step, the algorithm selects the canonical admissible center associated with the maximum of the geometric invariant $\mathrm{inv}$ restricted to the closed complement $X \setminus X^{\mathrm{nc}}(\mathcal{I})$.
  \item By the Center Theorem, this chosen center is strictly disjoint from $X^{\mathrm{nc}}(\mathcal{I})$. Consequently, the blow-up completely avoids and perfectly preserves the pre-existing normal crossings structure.
  \item Because the invariant strictly drops globally, the process must terminate. The algorithm stops when no points lie outside the nc locus, so that $X = X^{\mathrm{nc}}(\mathcal{I})$.
\end{itemize}

\subsection{Galois groups, weights, and stacks}

The obstruction to eliminating nc singularities without modifying the nc locus
is already visible in classical examples, most notably the pinch point
(Whitney umbrella), as explained by Koll\'ar
\cite[Paragraph 8]{Kollar}; see also
\cite[Corollary 3.6.10]{Fujino} and \cite[Example 1.7]{BM12}.
At a nonzero point of the axis, the pinch point has two analytic branches
which are interchanged when one loops around the origin.
Any birational morphism that is an isomorphism over the generic point
of the axis preserves this monodromy.
Hence the singularity cannot be eliminated without blowing up the axis itself.

This example reflects a general feature of nc singularities: their branches split only after passing to a finite Galois extension, and the original nc structure is recovered as the quotient by a finite group acting by permutation of branches.

Thus nc geometry is inherently equivariant geometry.
Smooth blow-ups performed equivariantly on the splitting cover
descend to weighted blow-ups on the base,
with weights encoding the stabilizer and ramification data
of the finite group action.

Since these stabilizers are intrinsic to the nc structure,
the natural ambient category is that of Deligne-Mumford stacks:
the stack quotient records the finite isotropy groups,
while the coarse space corresponds to the weighted transform.
Stacks therefore arise canonically in nc-preserving resolution.

\medskip

To make this structure canonical, we develop a Galois theory
of multivariate splitting forms.

Let
\[
F(x_1,\ldots,x_k)=\prod_i \ell_i^{a_i}\in K[x_1,\ldots,x_k]
\]
be a homogeneous form whose linear factors $\ell_i$
split only after a finite extension.
We associate to $F$ a canonical splitting field $K_F$
with Galois group
\[
G_F=\mathrm{Gal}(K_F/K),
\]
where $K$ is the fraction field of a ring $R$,
and denote by $R_F$ the integral closure of $R$ in $K_F$.
The morphism
\[\mathrm{Spec}\, R_F[x_1,\ldots,x_k]\longrightarrow \mathrm{Spec}\, R[x_1,\ldots,x_k]
\]
encodes the splitting behavior of $F$.
Here $R$ is the coordinate ring of the canonical center,
and $x_1,\ldots,x_k$ generate its defining ideal.

We prove that the locus where $F$ has distinct linear factors
coincides with the \'etale locus of this morphism.
Thus the nc locus is precisely the locus where the associated
Galois cover is unramified.
On the splitting cover the initial form splits and the singularity becomes formally simple normal crossings along the canonical center; equivariant smooth centers resolving the ramification locus descend to canonical weighted centers downstairs.

This Galois-weight-stack correspondence is the structural input
in the proof of the Center Theorem and explains
why canonical weighted centers cannot partially intersect
the nc locus.\section{Main Theorems}

Let $X$ be a smooth irreducible variety, or more generally a smooth irreducible
Deligne--Mumford stack, over a field of characteristic~$0$.  
Let $E$ be an SNC divisor on $X$, and let $\mathcal{I} \subset \mathcal{O}_X$
be a coherent ideal.

Denote by
\[
X^{\mathrm{nc}}, \quad
X^{\mathrm{nc}}_k, \quad
X^{\mathrm{nc}}_{\le k}, \quad
X^{\mathrm{nc}}_{\mathrm{red}}, \quad
X^{\mathrm{nc}}_{k,\mathrm{red}}, \quad
X^{\mathrm{nc}}_{\le k,\mathrm{red}}
\]
the loci of points $p \in X$ such that the closed substack
\[
Z := \Spec(\mathcal{O}_X/\mathcal{I})
\]
has, at $p$, either smooth or normal crossings (respectively reduced normal
crossings) singularities adapted to $E$.

More precisely, these loci correspond to points where $\mathcal{I}$
defines a subscheme that is smooth or has normal crossings (NC),
NC of codimension $k$, NC of codimension at most $k$,
reduced NC, reduced NC of codimension $k$, or reduced NC of
codimension at most $k$, all adapted to $E$.

More generally, one may fix any prescribed collection of codimensions
for the allowed smooth or NC singularities adapted to $E$.

We write $X^{\mathrm{nc}}_*$ to denote any one of the above NC loci.
\begin{theorem}[NC-preserving principalization]
\label{thm:princ-NC}
Let $(X,E)$ be a smooth irreducible variety, or more generally a smooth
irreducible Deligne-Mumford stack, over a field of characteristic~$0$,
equipped with an SNC divisor $E$, and let $\mathcal{I}\subset \mathcal{O}_X$
be a coherent ideal.

There exists a canonical semi-continuous invariant
\[
\inv_{(X,E)}(\mathcal{I}) : X \to \Sigma,
\]
with values in a well-ordered set (see Section~\ref{sec:invariant},
Definition~\ref{cntr}),
and a canonical sequence of weighted blow-ups
\[
   \Pi\colon X_N \longrightarrow \cdots \longrightarrow X_1
   \longrightarrow X_0 = X
\]
with smooth centers disjoint from the normal crossings locus
$X^{\mathrm{nc}}_*$ such that:

\begin{enumerate}
\item Each center coincides with the locus where the invariant
$\inv_{(X,E)}(\mathcal{I})$ attains its maximal value outside
$X^{\mathrm{nc}}_*$.

\item The invariant strictly decreases after each blow-up away from
the NC locus. In particular, the process terminates.

\item The morphism $\Pi$ is an isomorphism over $X^{\mathrm{nc}}_*$.
\item The process terminates when the controlled transform
$\Pi^{c}\mathcal{I}\cdot\mathcal{O}_{X_N}$
has normal crossings adapted to the SNC divisor $E' \cup D$.
\item The total transform satisfies
\[
\Pi^{-1}\mathcal{I}\cdot\mathcal{O}_{X_N}
= \mathcal{I}_D \cdot \mathcal{I}',
\]
where
\begin{itemize}
\item $\mathcal{I}_D = \mathcal{O}_{X_N}\!\left(-\sum m_i D_i\right)$
is the invertible ideal sheaf of the exceptional divisor
$D = \sum D_i$, with multiplicities $m_i$
accumulated during the resolution process;

\item the controlled transform $\Pi^c(\mathcal{I})=\mathcal{I}'$
has normal crossings adapted to the SNC divisor
$E' \cup D$, where $E'$ denotes the strict transform of $E$.
\end{itemize}
\item In particular, if $X^{\mathrm{nc}}_*$ coincides with the locus of codimension-one
normal crossings of $\mathcal{I}$, then the total transform
\[
\Pi^{-1}\mathcal{I}\cdot\mathcal{O}_{X_N}
\]
is a normal crossings divisor.
\end{enumerate}
\end{theorem}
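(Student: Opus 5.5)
The proof follows the General Resolution Principle from the introduction: we feed the logarithmic weighted resolution algorithm of \cite{ATW-weighted, W22, W23}, restricted to the complement of the nc locus, with two inputs. The first is the openness of $X^{\mathrm{nc}}_*$. The second is the Center Theorem (Theorem~\ref{center}). We take as given the existence of the canonical upper semicontinuous invariant $\inv_{(X,E)}(\mathcal{I})$ with values in a well-ordered set $\Sigma$, its canonical maximal admissible weighted center $J$, and the basic properties established there. These properties are: the support of $J$ is the maximal locus of $\inv$; $J$ is functorial for smooth morphisms and compatible with \'etale localization, hence defined on Deligne--Mumford stacks; and after the stack-theoretic weighted blow-up $X'\to X$ along $J$, the controlled transform satisfies $\inv'<\max\inv$ on the exceptional locus, with $E'\cup D$ remaining SNC.

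\textbf{Step 1 (openness).} Show that $X^{\mathrm{nc}}_*$ is open. This uses the Galois analysis of splitting forms from the introduction. At a point $p$ where $\mathcal{I}$ is nc adapted to $E$, the initial form along the canonical center splits as $\prod \ell_i^{a_i}$ after a finite Galois extension. Since the distinct-factor locus equals the \'etale locus of $\Spec R_F[x]\to \Spec R[x]$, which is open, the nc condition persists in a neighbourhood. Codimension and reducedness conditions are locally constant or open, so all variants $X^{\mathrm{nc}}_k$, $X^{\mathrm{nc}}_{\le k}$, and the reduced ones are also open. Consequently $Z_0 := X\setminus X^{\mathrm{nc}}_*$ is closed, and by semicontinuity and well-ordering, $\inv$ attains a maximum $\sigma_0$ on $Z_0$.

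\textbf{Step 2 (choice of centers).} Consider the locus $\{\inv=\sigma_0\}$ on all of $X$. It may also meet $X^{\mathrm{nc}}_*$, where $\inv$ can be larger or equal. We pass to the open set $U=X\setminus\{\inv>\sigma_0\}\cap \overline{\ldots}$; more cleanly, we take the connected components of the maximal locus of $\inv$ on the open set $U=\{x:\inv(x)\le\sigma_0\}\cup X^{\mathrm{nc}}_*$ and apply the canonical center there. By the Center Theorem, each component of the canonical maximal admissible center either lies inside $X^{\mathrm{nc}}_*$ or is disjoint from it. We select the components meeting $Z_0$, which are therefore disjoint from $X^{\mathrm{nc}}_*$ and closed in $X$. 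This is the main delicate point. We must verify that these components are closed in $X$, not just in $U$. This follows because their closure stays inside the locus where $\inv\ge\sigma_0$ intersected with $Z_0$, and by maximality of $\sigma_0$ on $Z_0$ the invariant equals $\sigma_0$ there. We must also verify that the center is smooth and canonical, which follows from functoriality of the restriction to open sets. This gives (1), and also (3), since the blow-up is an isomorphism away from the center.

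\textbf{Step 3 (induction and conclusion).} After the blow-up, $X^{\mathrm{nc}}_*$ is unchanged; new points over the center have $\inv<\sigma_0$. Over the complement of the center the invariant is unchanged, so the maximum of $\inv$ on the new non-nc locus either drops or its locus shrinks. Since this is well-ordered, and there are finitely many components by quasi-compactness, the procedure terminates, giving (2). Termination means the non-nc locus of the controlled transform is empty, giving (4). Factor (5) follows from the definition of controlled transform in \cite{W23}: each weighted blow-up pulls out $\mathcal{I}_{D_i}^{m_i}$, with $m_i$ the weighted order, and composites multiply. For (6), if only codimension-one nc is allowed, then $\mathcal{I}'$ is locally a monomial of divisors in normal crossings with $E'\cup D$. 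Multiplying it by $\mathcal{I}_D$ yields an nc divisor.

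The main obstacle is Step~2. The Center Theorem is exactly what prevents a center from cutting the nc locus partially. The only remaining care is global closedness of the selected components and compatibility with the stack structure, which I would handle through \'etale-local functoriality of $\inv$.
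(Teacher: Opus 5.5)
Your proposal is correct and follows essentially the same route as the paper's proof. Openness makes $X\setminus X^{\mathrm{nc}}_*$ closed, the Center Theorem forces the canonical center at the maximum $\sigma_0$ of $\inv$ on this set to avoid $X^{\mathrm{nc}}_*$, the invariant drops, and (5)--(6) are read off from the controlled transform. Your Step~2 is more careful than the paper, which simply asserts $V(\mathcal J)\subset Z$ even though the level set $\{\inv=\sigma_0\}$ can have components lying entirely in the nc locus.

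Two small repairs are needed. First, the open set should be $\{\inv\le\sigma_0\}$ alone, since adjoining $X^{\mathrm{nc}}_*$ can reintroduce values $>\sigma_0$ and make your selection empty. Second, openness of $X^{\mathrm{nc}}_*$ follows directly by spreading out the \'etale-local monomial presentation; the Galois \'etale-locus argument you invoke only controls points of the center, not a neighbourhood of $p$ in $X$.
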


\begin{theorem}[Embedded NC-preserving resolution]
\label{thm:embedded-NC}
Let $(X,E)$ be a smooth irreducible variety, or more generally a smooth
irreducible Deligne-Mumford stack, over a field of characteristic~$0$,
equipped with an SNC divisor $E$, and let
$Y \subset X$ be a closed subscheme (respectively, a closed substack).

There exists a canonical sequence of weighted blow-ups
defined by the invariant $\inv_{(X,E)}(Y)$.
\[
   \Pi: X_N \longrightarrow \cdots \longrightarrow X_1
   \longrightarrow X_0 = X
\]
with maximal admissible centers disjoint from the normal crossings locus
$X_*^{\mathrm{nc}}$, such that:

\begin{enumerate}
\item Each center coincides with the locus where the invariant
$\inv_{(X,E)}(Y)$ attains its maximal value outside $X_*^{\mathrm{nc}}$.

\item The invariant strictly decreases after each blow-up away from
the NC locus. In particular, the process terminates.

\item The morphism $\Pi$ restricts to an isomorphism over
$X_*^{\mathrm{nc}}$.

\item The strict transform $\Pi^s(Y)\subset X_N$ has normal crossings
adapted to the SNC divisor $E' \cup D$, where
$D$ is the exceptional divisor and $E'=\Pi^s(E)$.
\end{enumerate}
\end{theorem}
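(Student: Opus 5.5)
We will deduce Theorem~\ref{thm:embedded-NC} from the principalization result, Theorem~\ref{thm:princ-NC}, by the usual reduction from embedded resolution to principalization. We use the structure of the logarithmic invariant of \cite{W22,W23}, which is insensitive to the ideal's exceptional part in the prescribed way. Let $\mathcal{I}=\mathcal{I}_Y$ and set $\inv_{(X,E)}(Y):=\inv_{(X,E)}(\mathcal{I}_Y)$. We run the algorithm of Theorem~\ref{thm:princ-NC} with the locus $X^{\mathrm{nc}}_*$ chosen to consist of the points where $Y$ is smooth or has nc singularities of the allowed codimensions adapted to $E$.

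\textbf{Step 1: choice of centers.} By the Openness property, $X^{\mathrm{nc}}_*$ is open, so its complement is closed. We take the center to be the canonical maximal admissible weighted center over the maximal value of $\inv$ on this complement. By the Center Theorem~\ref{center}, a center meeting $X^{\mathrm{nc}}_*$ would have to lie inside it, so the chosen center is disjoint from $X^{\mathrm{nc}}_*$. This gives (1) and (3) step by step.

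\textbf{Step 2: strict transform vs.\ controlled transform.} We show inductively that on $X_j$ the strict transform $\Pi_j^s(Y)$ coincides, away from the nc locus, with the subscheme cut out by the controlled transform $\Pi_j^c(\mathcal{I})$, up to factors supported on $D$. Here we use the standard fact for weighted blow-ups with the ATW/log invariant. The invariant of the strict transform equals that of the controlled transform at points of the strict transform, because the center has maximal order equal to the first entry of $\inv$, and the exceptional divisor enters the log invariant only through monomial factors. Consequently the nc locus of $\Pi_j^s(Y)$ adapted to $E'\cup D$ agrees with that of $\Pi^c_j\mathcal{I}$ adapted to $E'\cup D$. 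We also need to check that the nc locus upstairs contains the preimage of the old one, which holds since $\Pi$ is an isomorphism there and $D$ does not meet it. The strict decrease in (2) is then inherited from Theorem~\ref{thm:princ-NC}(2).

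\textbf{Step 3: termination.} Once $\Pi^c\mathcal{I}$ has nc adapted to $E'\cup D$, every point lies in the nc locus, so $\Pi^s(Y)$ is nc adapted to $E'\cup D$, which gives (4). The process is formulated on DM stacks, and the weighted blow-ups are stack-theoretic, so the functoriality of $\inv$ under smooth morphisms makes everything descend along étale charts.

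\textbf{Main obstacle.} The delicate point is Step~2. A component of $Y$ that is nc generically but whose strict transform acquires components inside $D$ must not be misclassified. We would handle this by working with the ideal $\mathcal{I}_Y$ on the complement of $D$ and taking its closure, and by verifying via the Galois splitting-cover description that nc of $\Pi^c\mathcal{I}$ at a point of $D$ forces nc of the strict transform adapted to $E'\cup D$. The branches on the splitting cover that are contained in $D$ are precisely the monomial factors removed by the controlled transform.
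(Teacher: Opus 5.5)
Your overall route is the paper's. You run the NC-preserving principalization of Theorem~\ref{thm:princ-NC} on $\mathcal{I}_Y$ with controlled transforms, use Openness and the Center Theorem to keep the centers off $X^{\mathrm{nc}}_*$, and read off the strict transform at the end. The paper's proof does exactly this and simply asserts the last step.

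However, Step~2 as you state it is false: the invariants of the strict and controlled transforms need not agree at points of the strict transform. Take $\mathcal{I}=(x^2+y^3,\,xy^2z)$ on $\mathbb{A}^3$ with $E=\emptyset$. Its maximal admissible center is $(x^2,y^3)$ along the $z$-axis, which is the whole non-NC locus. In the cobordant chart $x=x's^3$, $y=y's^2$ with $s=t^{-1}$:
\begin{itemize}
\item the controlled transform is $(x'^2+y'^3,\,s\,x'y'^2z)$;
\item the strict transform is $(x'^2+y'^3,\,z)$, the transform of the cuspidal curve $\{z=0,\ x^2+y^3=0\}$.
\end{itemize}
At a point of $D$ on the strict transform, $x'y'^2$ is a unit and $u=x'^2+y'^3$ is a parameter. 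So the controlled transform is $(u,sz)$ with $\inv=(1,2,2_+)$, while the strict transform $(u,z)$ has $\inv=(1,1)$. The exceptional divisor does enter only through a monomial factor, but that factor changes the invariant. Your ``consequently'' about equality of the NC loci is therefore not established.

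What you actually need is only one inclusion: the NC locus of $\Pi^c\mathcal{I}$ lies in that of $\Pi^s(Y)$. This is elementary.
\begin{itemize}
\item Since $\Pi^{-1}\mathcal{I}=\mathcal{I}_D\cdot\Pi^c\mathcal{I}$ with $\mathcal{I}_D$ invertible and supported on $D$, the strict transform is the saturation $(\Pi^c\mathcal{I}:\mathcal{I}_{D}^{\infty})$. Saturation commutes with \'etale base change.
\item Work in an \'etale chart where $\Pi^c\mathcal{I}=(u_1,\ldots,u_r,u_{r+1}^{a_{r+1}}\cdots u_k^{a_k})$ is adapted to $E'\cup D$. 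The components of $D$ are cut out by divisorial parameters, never by the free $u_1,\ldots,u_r$.
\item Saturating therefore just deletes the $D$-variables from the monomial. What remains is an NC presentation adapted to $E'\cup D$, or the unit ideal.
\end{itemize}
No Galois cover is needed here.

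Your last sentence also misplaces the mechanism. The controlled transform removes only the invertible factor $\mathcal{I}_D$; it is the saturation that removes residual $D$-monomials such as $s$ in $(u,sz)$. Likewise, the strict transform never has components inside $D$; only the controlled transform can.

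Replace Step~2 by this saturation argument, and read (1)--(2) for $\inv$ of the controlled transform as in Theorem~\ref{thm:princ-NC}. Then your proof is complete and coincides with the paper's.
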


\begin{theorem}[Non-embedded NC-preserving resolution]
\label{thm:nonembedded-NC}
Let $Y$ be a  finite type scheme or a Deligne-Mumford  stack over a field of characteristic $0$. Let $Y^{\mathrm{nc}}$ (respectively $Y^{\mathrm{nc}}_{\red}$) be the NC locus of  $Y$ containing  all the points with at most NC singularities (resp. reduced NC singularities). There exists a  canonical resolution except NC singularities that is a proper birational morphism of Deligne-Mumford  stacks:\[
   \pi: Y' \to Y
\]
such 
  \begin{enumerate}
  \item 
  $Y'$ is a DM stack with at most NC singularities
   \item $\pi$ is an isomorphism over the NC locus $Y^{\mathrm{nc}}$.  
  \item  The inverse image
\[
\pi^{-1}(\Sing(Y))
\]is an SNC-adapted divisor on $Y'$.
\end{enumerate}
\end{theorem}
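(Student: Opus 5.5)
We reduce Theorem~\ref{thm:nonembedded-NC} to the embedded statement, Theorem~\ref{thm:embedded-NC}, and then glue using functoriality.

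\textbf{Local embedded resolution.} Cover $Y$ by étale (or smooth, for stacks) charts $U_\alpha\to Y$ admitting closed embeddings $U_\alpha\hookrightarrow X_\alpha$ into smooth varieties, and take $E_\alpha=\emptyset$. Apply Theorem~\ref{thm:embedded-NC} to $Y_\alpha:=U_\alpha\subset X_\alpha$, with the NC locus taken to be $X^{\mathrm{nc}}$ (respectively $X^{\mathrm{nc}}_{\red}$). This produces a sequence of weighted blow-ups $X_{\alpha,N}\to X_\alpha$ with the following properties:
\begin{enumerate}
\item the strict transform $Y'_\alpha$ has NC singularities adapted to the exceptional divisor $D_\alpha$;
\item the map $X_{\alpha,N}\to X_\alpha$ is an isomorphism over $X_\alpha^{\mathrm{nc}}$, and $X_\alpha^{\mathrm{nc}}\cap U_\alpha=U_\alpha^{\mathrm{nc}}$.
\end{enumerate}
Each step is a weighted blow-up, so $X_{\alpha,N}$ is a smooth DM stack. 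Hence $Y'_\alpha$ is a DM stack with at most NC singularities, and $\pi_\alpha\colon Y'_\alpha\to U_\alpha$ is proper birational.

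\textbf{Independence of the embedding and gluing.} The key point is that the restriction of the algorithm to $Y_\alpha$ does not depend on the embedding. Use the standard re-embedding argument. Any two embeddings become comparable after adding variables, $X\hookrightarrow X\times\AA^m$, and locally any two embeddings of the same codimension differ by an étale automorphism. The invariant $\inv_{(X,E)}(Y)$ of Section~\ref{sec:invariant} is built from the logarithmic invariant of \cite{W22,W23}. After the obvious shift in the embedding-dimension entries, this invariant is unchanged under $X\hookrightarrow X\times\AA^m$, and the canonical maximal admissible centers restrict to the same weighted centers on $Y$. The invariant is also functorial for smooth and étale morphisms, and the NC loci $X^{\mathrm{nc}}_*$ are compatible with étale base change and with the re-embedding. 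Consequently the sequence of centers restricted to $Y$, and thus the weighted blow-ups of $Y$, is intrinsic and compatible with étale pullbacks. The $\pi_\alpha$ then agree on overlaps $U_\alpha\times_Y U_\beta$ and descend to $\pi\colon Y'\to Y$. Properties (1) and (2) of Theorem~\ref{thm:nonembedded-NC} follow from the chartwise statements.

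For (3), work chartwise. The map $\pi_\alpha$ is an isomorphism over $U_\alpha^{\mathrm{nc}}$, so the centers lie over $U_\alpha\setminus U_\alpha^{\mathrm{nc}}\subset \Sing(U_\alpha)$. Then $\pi^{-1}(\Sing Y)$ is the union of $D_\alpha\cap Y'_\alpha$ with the preimage of the singular part of the NC locus. The first piece is an SNC-adapted divisor by the adaptedness in Theorem~\ref{thm:embedded-NC}(4). For the second piece we need $\Sing(Y)$ to be of pure codimension one at NC points. This holds for reduced NC singularities. In the non-reduced case we interpret "SNC-adapted divisor" in the paper's sense, namely the nc-strata together with $D$. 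If needed, we first run Theorem~\ref{thm:princ-NC} on the ideal of the non-NC part so that the exceptional part becomes a genuine divisor.

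\textbf{Main obstacle.} The main obstacle is showing that the algorithm is independent of the embedding, uniformly along the entire sequence. Re-embedding invariance is standard for the invariant of \cite{ATW-weighted}. Here, however, we must check two further things:
\begin{enumerate}
\item the logarithmic refinement of \cite{W22,W23} is re-embedding invariant, including the coordinates transverse to $Y$, which are not exceptional;
\item the choice ``maximal value outside $X^{\mathrm{nc}}_*$'' is intrinsic to $Y$.
\end{enumerate}
For (1), the plan is to show that adding a free variable $t$ to $X$ adds the generator $t$ of order one to the ideal of $Y$. This contributes a leading entry $1$ to the invariant and leaves the remaining entries unchanged; the weighted center then has $t$ with weight $1$, and the blow-up restricts compatibly to $Y$. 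For (2), it suffices that NC-ness of $Y\subset X$ at a point is a property of the local ring of $Y$, since $E=\emptyset$ initially.
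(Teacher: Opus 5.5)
Your construction of $\pi$ and your arguments for (1) and (2) follow the paper's proof. You embed locally (the paper uses quotient presentations $[U/G]\hookrightarrow[\AA^n/G]$; you use \'etale charts) and apply Theorem~\ref{thm:embedded-NC} with $E=\emptyset$. You then get independence of the embedding because a re-embedding $X_1\hookrightarrow X_2$ with $\mathcal I_{X_1}\subset\mathcal I_Y$ only prepends entries $1$ to $\inv$ (this is the paper's $\widetilde{\inv}$), and you glue by functoriality. Your plan to check that the blow-ups of $X\times\AA^m$ restrict to those of $X$ along the whole sequence is a detail the paper leaves implicit.

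\textbf{The gap is in (3)}, which the paper's proof does not treat at all. Your step ``for the second piece we need $\Sing(Y)$ to be of pure codimension one at NC points; this holds for reduced NC singularities'' does not produce an SNC-adapted divisor, and in fact nothing can.
\begin{itemize}
\item Take $Y=\{xy=0\}\subset\AA^2$. Then $Y^{\mathrm{nc}}=Y$, so $\pi$ is an isomorphism by (2), and $\pi^{-1}(\Sing Y)$ is the origin.
\item Any local NC presentation of this curve has the form $\mathcal I_Y=(x_1,\dots,x_{k-1},x_kx_{k+1})$ with $n=k+1$, so no parameter is left to be divisorial. The $x_i$ with $i<k$ are free by definition. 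A divisorial $x_k$ or $x_{k+1}$ would make the local equation of $D$ a zero divisor on $Y$, so $i^{-1}(D)$ would not be Cartier.
\item Hence, by Definition~\ref{nc2}, the only SNC-adapted divisor near the origin is the empty one.
\end{itemize}
More generally, near a point where two branches meet, an SNC-adapted divisor meets each branch in hypersurfaces of that branch not contained in the double locus. At non-reduced NC points such as $x^2=0$ one even has $\Sing(Y)=Y$. So (3) is false as literally stated. Neither reinterpreting ``SNC-adapted divisor'' as ``nc-strata together with $D$'' nor a preliminary run of Theorem~\ref{thm:princ-NC} repairs this.

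What your ``first piece'' does prove, and what is presumably intended, is the version with $\Sing(Y)$ replaced by the non-NC locus. Namely, $\pi^{-1}(Y\setminus Y^{\mathrm{nc}})$, with the structure $D|_{Y'}$, is an SNC-adapted divisor on $Y'$. This follows from Theorem~\ref{thm:embedded-NC}(4) together with the fact that the strict transform $Y'$ has no associated points in $D$. However, you use the set-theoretic equality $\pi^{-1}(Y\setminus Y^{\mathrm{nc}})=D\cap Y'$ without proof, and it needs both inclusions:
\begin{itemize}
\item Every center lies over $X\setminus X^{\mathrm{nc}}_*$, and $X^{\mathrm{nc}}_*\cap Y=Y^{\mathrm{nc}}$. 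This gives $D\cap Y'\subseteq\pi^{-1}(Y\setminus Y^{\mathrm{nc}})$.
\item Conversely, if $y'\in Y'\setminus D$, then $\Pi$ is an isomorphism over a neighborhood of $\pi(y')$. So $Y$ is NC at $\pi(y')$ because $Y'$ is.
\end{itemize}
Relatedly, your claim that $\pi_\alpha$ is birational needs $Y^{\mathrm{nc}}$ to be dense in $Y$. For $Y=V(x^2,y^2)\subset\AA^3$ the NC locus is empty, the first center is the $z$-axis, and the strict transform is empty. The paper makes the same tacit assumption.
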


\begin{theorem}[NC compactification]\label{NC-c}
Let $X$ be a separated Deligne--Mumford stack over a field of characteristic $0$ 
with at most normal crossing singularities (reduced or non-reduced). 
Then there exists a compactification 
\[
X \subset \overline{X}
\]
such that $\overline{X}$ is a proper Deligne--Mumford stack with at most 
normal crossing singularities containing  $X$ as an open dense substack. 
\end{theorem}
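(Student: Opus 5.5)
We reduce Theorem~\ref{NC-c} to a compactification of a possibly singular ambient stack, followed by the non-embedded NC-preserving resolution of Theorem~\ref{thm:nonembedded-NC}.

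\emph{Step one: an arbitrary compactification.} Since $X$ is a separated Deligne--Mumford stack of finite type over a field of characteristic~$0$, we may invoke Nagata compactification for DM stacks (Rydh's version for tame/DM stacks; alternatively, Kresch--Vistoli realize $X$ as a quotient $[U/\mathrm{GL}_n]$ with $U$ quasi-projective). This gives an open dense immersion $X\subset \overline{X}_0$ into a proper DM stack $\overline{X}_0$. If $X$ is non-reduced, we should make sure the compactification is done so that $X$ remains open; open immersions automatically preserve the local structure, so $X$ is an open substack of $\overline{X}_0$. We replace $\overline{X}_0$ by the scheme-theoretic closure of $X$, so that $X$ is dense, including in the non-reduced sense.

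\emph{Step two: resolve except NC.} Apply Theorem~\ref{thm:nonembedded-NC} to $Y=\overline{X}_0$. Since $X$ consists of points with at most NC singularities (reduced or not) and is open, we have $X\subset Y^{\mathrm{nc}}$. We obtain a proper birational $\pi\colon Y'\to \overline{X}_0$ which is an isomorphism over $Y^{\mathrm{nc}}$, hence over $X$, and $Y'$ has at most NC singularities. Set $\overline{X}=Y'$. Then:
\begin{itemize}
\item $\overline{X}$ is proper, as the composition of the proper maps $\pi$ and $\overline{X}_0\to\Spec k$;
\item $\pi^{-1}(X)\cong X$ is open in $\overline{X}$;
\item $\pi^{-1}(X)$ is dense in $\overline{X}$, since $\pi$ is birational and, for a sequence of weighted blow-ups with centers disjoint from the NC locus, no components are created outside the preimage of a dense open set. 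Exceptional divisors lie in the closure of their complements.
\end{itemize}

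\emph{Main obstacle.} The delicate point is the non-reduced case, namely density together with the claim that the construction in Theorem~\ref{thm:nonembedded-NC} is defined on the possibly non-reduced, non-equidimensional stack $\overline{X}_0$. That construction proceeds by local embeddings into smooth stacks and functoriality of Theorem~\ref{thm:embedded-NC} under smooth morphisms, so gluing is canonical. To guarantee density, we would take the closure step carefully, and if needed replace $\overline{X}$ by the closure of $X$ in $Y'$. This replacement is still proper. It still has NC singularities, since embedded components disjoint from $X$ are excluded by the strict transform property. We would check this by noting that, in the embedded version, the strict transform of the closure equals the closure of the strict transform away from the exceptional locus.
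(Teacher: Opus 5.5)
Your proposal is correct and follows essentially the same route as the paper: compactify $X$ by Rydh's theorem, then apply the non-embedded NC-preserving resolution (Theorem~\ref{thm:nonembedded-NC}), which is an isomorphism over the NC locus and hence over the open substack $X$. Your extra care about density, which the paper passes over, is sound, because each strict transform is the scheme-theoretic closure of the preimage of the complement of a center disjoint from $X$, so density of $X$ propagates inductively. Note only that the parenthetical Kresch--Vistoli alternative requires extra hypotheses (for instance a quasi-projective coarse space) and is not needed for the argument.
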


\subsection{Normal Crossing Schemes}
Throughout the paper we work primarily with smooth varieties over a field of characteristic zero, and occasionally with regular schemes. All constructions, definitions, and results extend verbatim, using local charts, to smooth Deligne-Mumford stacks over a field of characteristic zero. Accordingly, we will not distinguish between these settings in what follows.
\begin{definition} Let $X$ be a regular scheme, and $E$ be an SNC divisor on $X$. Then  a system or a partial system $x_1,\ldots,x_k$ of local parameters on $X$ is called {\it adapted} to $E$ if any component of $E$  can be written as a Weil divisor ${\rm div}(x_i)$.  At a point \( p \in U \), a coordinate \( x_i \) is called \emph{divisorial} if \( p \in V(x_i) = E_i \) for some component \( E_i \subseteq E \), and \emph{free} otherwise. 
\end{definition}
\begin{definition}[Normal Crossings Divisor and Ideal]
Let $X$ be a regular scheme and let $E$ be an SNC divisor on $X$.

\smallskip
\noindent
\textup{(1)} An effective Cartier divisor $D$ on $X$ is said to have 
\emph{normal crossings (NC) adapted to $E$} at a point $p \in X$ 
if there exists an \'etale neighborhood $U \to X$ of $p$ and a regular 
system of parameters
\[
x_1, \ldots, x_n \in \mathcal O_{U,p},
\]
adapted to the pullback $E_U$ of $E$, such that the pullback of $D$ to $U$ 
is defined by
\[
x_1^{a_1} x_2^{a_2} \cdots x_r^{a_r} = 0,
\]
for some $1 \le r \le n$ and integers $a_i \ge 1$. 
If, in addition, all $a_i=1$, we say that $D$ is a 
\emph{reduced normal crossings divisor}.

\smallskip
\noindent
\textup{(2)} More generally, a coherent ideal sheaf 
$\mathcal I \subset \mathcal O_X$, or the associated closed subscheme 
$\Spec(\mathcal O_X/\mathcal I)$, is said to have 
\emph{normal crossings (NC)} (respectively \emph{reduced normal crossings}) 
of codimension $k$ adapted to $E$ at $p \in X$ 
if there exists an \'etale neighborhood $U \to X$ of $p$ 
and a regular system of parameters
\[
x_1, \ldots, x_n \in \mathcal O_{U,p},
\]
adapted to $E_U$, such that
\[
\mathcal I\mathcal O_U
=
(x_1, \ldots, x_{k-1}, f),
\qquad
f = x_k^{a_k} x_{k+1}^{a_{k+1}} \cdots x_r^{a_r},
\]
where $x_1, \ldots, x_{k-1}$ are free, and the integers $a_i \ge 1$ (respectively $a_i=1$).

Equivalently, the smooth subscheme $V(x_1,\ldots,x_{k-1})$ of 
codimension $k-1$ carries a normal crossings divisor defined by $f$.

If such presentations exist on an open neighborhood of $p$, 
we say that $D$ (respectively $\mathcal I$) is 
\emph{simple normal crossings (SNC)} adapted to $E$.
\end{definition}
\begin{definition}[NC schemes (non-embedded)] \label{nc2}
A scheme $Y$ is called \emph{normal crossings}
(resp.\ \emph{reduced NC})
if, Zariski-locally on $Y$, there exists a closed embedding
\[
  i : Y \hookrightarrow X
\]
into a regular scheme $X$ such that the defining ideal
$\mathcal{I}_Y \subset \mathcal{O}_X$
is an NC ideal in the sense above.

A Cartier divisor $E$ on an NC scheme $Y$
is called \emph{SNC-adapted}
if, Zariski-locally on $Y$, there exists a closed embedding
\[
  i : (Y,E) \hookrightarrow (X,D)
\]
into a regular scheme $X$ equipped with an SNC divisor $D$
such that:
\begin{itemize}
\item $\mathcal{I}_Y \subset \mathcal{O}_X$ is an NC ideal adapted to $D$, and
\item $i^{-1}(D)=E$ as Cartier divisors on $Y$.
\end{itemize}
\end{definition}\section{Resolution by Weighted blow-ups} 
In this section we provide a brief summary of stack-theoretic and cobordant weighted blow-ups in order to clarify the general context of the nc algorithm, which can be viewed as a modification of the standard weighted blow-up resolution via the Center Theorem. The proofs of the Center Theorem and the main results rely only on very basic properties of weighted blow-ups. Consequently, readers interested primarily in the core arguments may safely skip this section and refer back to it only when needed.
\subsubsection{\texorpdfstring{\textbf{$\QQ$-ideals}}{Q-ideals}}
We start by recalling the basic notions of ideal sheaves and Rees algebras we need, starting by the notion of {\it valuative $\QQ$-ideals $\cJ$} (or simply  {\it $\QQ$-ideals $\cJ$}), introduced in \cite{ATW-weighted} and closely related to \emph{idealistic exponents} of \cite{Hironaka-idealistic}. In the simplest case they could be directly related to the notion of  {\it rational powers of ideals} considered by Huneke-Swanson (\cite[Section 10.5]{HS06}). They naturally generalize the notion of ideals and can be used for the compact description of the centers. The following definition is an adaptation of the more general definition from \cite{ATW-weighted} to a simpler case as in \cite{ W22,W23}:

\begin{definition}[$\QQ$-ideal] By  a {\it $\QQ$-ideal $\cJ$} on a normal noetherian scheme $X$ we mean an equivalence  class  of  formal expressions  $\cJ^{1/n}$, where $\cJ$ is the ideal on $X$ and $n\in \NN$ is a natural number.  We say that $\cJ^{1/n}$ and $\cI^{1/m}$ are {\it equivalent} if the integral closures of
$\cJ^{m}$ and $\cI^{n}$ are equal:
\[
(\cJ^m)^\inte=(\cI^m)^\inte.
\]
\end{definition}

\begin{remark} If $\cJ^{1/n}$ and $\cI^{1/m}$ represent the same $\QQ$-ideal for some ideals $\cI$, and $\cJ$ and $m,n\in \NN$ then the ideals $\cI$ and $\cJ$ are called {\it projectively equivalent} with respect to the coefficient $m/n$ following Rush \cite{Rush07}.

\end{remark}
 One naturally considers  the operations of addition $\cI^{1/n}+\cJ^{1/m}=(\cI^m+\cJ^n)^{1/mn}$, and multiplication $\cI^{1/n}\cdot\cJ^{1/m}=(\cI^m\cJ^n)^{1/mn}$. Any  $\QQ$-ideal $\cJ=\cI^{1/n}$ on $X$ defines the {\it ideal of sections $$\cJ_X:=\{f\in \cO_X\mid f^n\in \cI^{\inte}\}.$$}
 are known as the {\it rational powers of $\cI$} and were considered by Huneke-Swanson in \cite{HS06}. Consequently one can associate  with any $\QQ$-ideal $\cJ$  the graded algebra of $\QQ$-ideals
 
\subsection{ Canonical invariants and maximal admissible centers}
\subsubsection{Canonical invariant in the non-logarithmic setup} In \cite{ATW-weighted}, a \emph{center} is a $\QQ$-ideal  of the form $\cJ=(x_1^{a_1},\ldots,x_k^{a_k})$, where $(x_1,\ldots,x_k)$ is a partial  system of local parameters.  We shall always assume that $$a_1\leq\ldots\leq a_k$$ in the presentation of the center. We now introduce the canonical invariant $\inv$ used in the weighted 
resolution of singularities, following~\cite{W22,W23}, as a variation 
of the construction in~\cite{ATW-weighted}. 
The invariant originates from the inductive definition given in~\cite{ATW-weighted}. 
In~\cite{W22,W23} it was reformulated both in the framework of 
$\QQ$-ideals and in terms of Rees algebras, and further extended to the 
logarithmic setting for the purposes of weighted logarithmic resolution.

We begin with the non-logarithmic setup. To this end, we consider the ordered set
\[
\QQ_+ := \{0,1,2,\ldots\} \cup \{\infty\},
\]
equipped with the natural total order (and later with the induced 
lexicographic order on finite sequences).\begin{definition}[Canonical invariant]\label{def:CanonicalInvGeneral}
Let $\cI \subset \cO_X$ be a coherent ideal and let $p\in X$.
A  center
\[
\cJ=(x_1^{a_1},\ldots,x_k^{a_k})
\]
(in a system of regular parameters at $p$) is called
\emph{$\cI$-admissible at $p$} if
\[
\cI_p \subset \cJ_{X,p}.
\]

The \emph{canonical invariant} of $\cI$ at $p$ is the maximal multi-index
\[
\inv_p(\cI)
:= \max_{\mathrm{lex}}
\left\{
(a_1,\ldots,a_k)
\;\middle|\;
\cI_p \subset (x_1^{a_1},\ldots,x_k^{a_k})_{X,p}
\right\},
\]
where the maximum is taken with respect to the lexicographic order
over all $\cI$-admissible monomial centers at $p$, with $(x_1,\ldots,x_k)$ ranging over all partial regular systems of parameters of $\cO_{X,p}$.

An $\cI$-admissible center $\cJ$ is called
\emph{canonical} (or \emph{maximal admissible})
if its exponent vector coincides with $\inv_p(\cI)$.
\end{definition}
It is proven that canonical center $\cJ$ of any ideal $\cI$ exists and is unique.

\subsubsection{Rational Rees algebra} It is convenient to consider rationally graded Rees algebras in our context.
\begin{definition}\cite{ W22,W23} A \emph{rational Rees algebra} (or simply \emph{Rees algebra}) is a finitely generated $\cO_X$-algebra
\[
R = \bigoplus_{a \in \Gamma} R_a t^a \subset \cO_X[t^{1/w_R}],
\]
where $\Gamma$ is a finitely generated additive subsemigroup of $\QQ_{\geq 0}$, $R_0 = \cO_X$, and $R_a \cdot R_b \subseteq R_{a+b}$. The minimal $w_R \in \QQ_{>0}$ such that $\Gamma \subseteq (1/w_R)\cdot \ZZ_{\geq 0}$ is called the \emph{grading denominator}.
\end{definition}

\subsubsection{Regular centers vs. Rees centers}
There is a natural correspondence between the center $\cJ:=(x_1^{a_1},\ldots,x_k^{a_k})$  and their Rees version:
\begin{itemize}
\item Rees centers $\cA_\cJ=\cO_X[x_1 t^{1/a_1}, \ldots, x_k t^{1/a_k}]^\inte$, with $a_i\in \QQ_{>0}$.
\item Extended Rees centers  $\cA_\cJ^{\ext}=\cO_X[t^{-1/w_A},x_1 t^{1/a_1}, \ldots, x_k t^{1/a_k}]$, where $w_A=\lcm(a_1,\ldots,a_k)$
\end{itemize}

Here $"\inte"$ stands for the internal closure in $\cO_X[t^{1/w_A}]$. One can show that that the nonnegative part 
$$\cA^{\ext}_{\cJ \geq 0}:=\cO_X[t^{-1/w_A},x_1 t^{1/a_1}, \ldots, x_k t^{1/a_k}] \cap \cO_X[t^{1/w_A}]$$ coincides with $$\cA^{\ext}_{\cJ \geq 0}=\cA_\cJ.$$

\bigskip
\subsection{Canonical invariant for Rees algebras}

The invariant can be expressed most naturally in terms of 
Rees algebras with rational gradings.

\begin{definition}[Canonical invariant {\cite[Def.~3.1.7]{ W22,W23}}]
\label{def:CanonicalInvGeneral-Rees}
Let $X$ be a regular scheme, and let $\mathcal{I}$ be an ideal sheaf, or more 
generally let $\mathcal{R}$ be a Rees algebra on $X$.  

A \emph{weighted center} at a point $p\in X$ is an algebra of the form
\[
   \mathcal{A}_J
   = \mathcal{O}_{X,p}
     \bigl[x_1 t^{1/a_1},\ldots,x_k t^{1/a_k}\bigr]^{\mathrm{int}},
\]
where $a_1\le\cdots\le a_k$ are positive rational numbers and 
$(x_1,\ldots,x_k)$ is a regular system of parameters at $p$.

We say that $\mathcal{A}_J$ is \emph{$\mathcal{I}$-admissible} 
(resp.\ \emph{$\mathcal{R}$-admissible}) at $p$ if
\[
   \mathcal{I}_p\, t \subset \mathcal{A}_{J,p},
   \qquad
   \text{resp. } 
   \mathcal{R}_p \subset \mathcal{A}_{J,p}.
\]

The \emph{canonical invariant} at $p$ is defined by
\[
   \inv_p(\mathcal{I})
   \ (\text{resp.\ } \inv_p(\mathcal{R}))
   :=
   \max_{\mathrm{lex}}
   \left\{
     (a_1,\ldots,a_k)
     \,\middle|\,
     \mathcal{A}_J
     \text{ is admissible at } p
   \right\}.
\]

The corresponding center
$
   J=(x_1^{a_1},\ldots,x_k^{a_k})
$
and the associated weighted algebra $\mathcal{A}_J$
are called \emph{maximal admissible centers} if their exponent vector
equals the canonical invariant.
\end{definition}

\begin{remark}
\label{rem:inv-compat-QIDL}
Let 
\[
\mathcal{A}_J
=
\mathcal{O}_{X,p}
\bigl[x_1 t^{1/a_1},\ldots,x_k t^{1/a_k}\bigr]^{\mathrm{int}}
=
\bigoplus_{q\in\QQ_{\ge0}} (\mathcal{A}_J)_q t^q
\]
be a weighted center and let 
\(
J=(x_1^{a_1},\ldots,x_k^{a_k})
\)
be the corresponding $\QQ$-ideal.
Then the degree $1$ component satisfies
$
(\mathcal{A}_J)_1 = J_{X,p}.
$
Hence
\[
\mathcal{I}_p\, t \subset \mathcal{A}_{J,p}
\quad\Longleftrightarrow\quad
\mathcal{I}_p \subset J_{X,p}.
\]
Therefore the definition of $\inv_p(\mathcal{I})$ via weighted centers
coincides with the previous $\QQ$-ideal definition.
\end{remark}

\begin{example} $\cI t=(x^2+y^3+w^4)t\subset  \cO_X[x t^{1/2},yt^{1/3},wt^{1/4}]^{\inte} $
$$\inv_0(\cI)=(2,3,4).$$
\end{example}

\subsection{Weighted, stack-theoretic, and cobordant blow-ups}

For the weighted ideal 
\[
\cJ = (x_1^{1/w_1}, \ldots, x_k^{1/w_k}),
\]
the classical weighted blow-up is given by
\[
Y = \cProj\!\big(\cO_X[x_1 t^{w_1}, \ldots, x_k t^{w_k}]^{\!\inte}\big).
\]

The stack-theoretic weighted blow-up (see \cite[3.1]{ATW-weighted}) is the quotient Deligne-Mumford stack
\[
\left[
\big(\Spec_X(\cO_X[x_1 t^{w_1},\ldots,x_k t^{w_k}]^{\!\inte}) 
   \setminus V(t^{w_1}x_1,\ldots,t^{w_k}x_k)\big)
   /G_m
\right].
\]

The \emph{cobordant weighted blow-up} was introduced and studied in 
\cite{ W22,W23}. 
It is defined by the morphism \(B_+ \to X\), where
\[
B = \Spec_X\!\big(\mathcal{O}_X[t^{-1}, x_1 t^{w_1},\ldots,x_k t^{w_k}] \big),
\qquad
B_+ := B \setminus V(x_1 t^{w_1},\ldots,x_k t^{w_k}).
\]
Its stack-theoretic version is the morphism
\[
[B_+/G_m] \to X,
\]
 which provides a presentation of the stack-theoretic weighted blow-up,
independently considered in~\cite{Quek-Rydh}.

Here $V$ is the vertex typically representing the worst singularities of the transform which are linked to the singularities of the center. The vanishing locus $D=V(t^{-1})$ is  the exceptional divisor mapping to the center and $V(\cJ)$ such that $$ B_-:= B \setminus V(t^{-1})\simeq X\times  G_m$$
is the trivial part of the cobordant blow-up. (\cite{ W22,W23}  )

\begin{figure}[ht]
\centering
\begin{tikzpicture}[scale=1.2, every node/.style={font=\small}, >=Latex]

\begin{scope}[shift={(0,0)}]

\foreach \x in {-1.3, 1.3} {
  \draw[thick] (\x,-0.5) -- (\x,0.65);
  \draw[thick, ->] (\x,0.65) -- (\x,1.15); 
  \draw[thick] (\x,1.15) -- (\x,1.8);
}

\draw[blue, thick, ->] (0,-0.2) -- (0,0.78) node[below left, blue] {};
\node[blue,thick] at (0,0.4) {$V$ vertex};

\foreach \angle in {140, 120, 90, 60, 40}
  \draw[green!50!black, thick, ->] (0,0.8) -- ++(\angle:1.2);

\node[green!50!black] at (1.1,2) {$D$ exc. divisor};

\draw[very thick] (-1.5,-0.8) -- (1.5,-0.8);
\fill[red] (0,-0.8) circle (1.5pt);
\node[below, red] at (0,-0.8) {$V(\mathcal{J})$ center};

\node at (0,-2.0) {$B$ full cobord. blow-up};

\end{scope}

\begin{scope}[shift={(3.5,0)}]

\foreach \x in {-1.3, 1.3} {
  \draw[thick] (\x,-0.5) -- (\x,0.65);
  \draw[thick, ->] (\x,0.65) -- (\x,1.15);
  \draw[thick] (\x,1.15) -- (\x,1.8);
}

\foreach \angle in {140, 120, 90, 60, 40}
  \draw[green!50!black, thick, ->] (0,0.8) -- ++(\angle:1.2);

\node[green!50!black] at (1.1,2) {$D$ exc. divisor};

\node[green!50!black,very thick,draw, circle, inner sep=1.5pt] at (0,0.8) {};
\draw[very thick] (-1.5,-0.8) -- (1.5,-0.8);
\fill[red] (0,-0.8) circle (1.5pt);
\node[below, red] at (0,-0.8) {$V(\mathcal{J})$ center};

\node at (0,-2.0) {$B_+ = B \setminus V$ cob. blow-up};

\end{scope}

\begin{scope}[shift={(7,0)}]

\foreach \x in {-1.3, -0.9, -0.5, 0.5, 0.9, 1.3} {
  \draw[thick] (\x,-0.5) -- (\x,0.65);
  \draw[thick, ->] (\x,0.65) -- (\x,1.15);
  \draw[thick] (\x,1.15) -- (\x,1.8);
}

\draw[blue, thick, ->] (0,-0.2) -- (0,0.8) node[above right, blue] {$V$};

\draw[very thick] (-1.5,-0.8) -- (1.5,-0.8);
\fill[red] (0,-0.8) circle (1.5pt);
\node[below, red] at (0,-0.8) {$V(\mathcal{J})$ center};

\node at (0,-2.0) {$B \setminus D = X \times \mathbb{G}_m$};

\end{scope}

\end{tikzpicture}
\caption{Cobordant blow-up: the role of the vertex $V$ and the exceptional divisor $D$. (Reproduced from \cite{ W22}.)}\label{F1}
\label{fig:cobordant-blowup}
\end{figure}
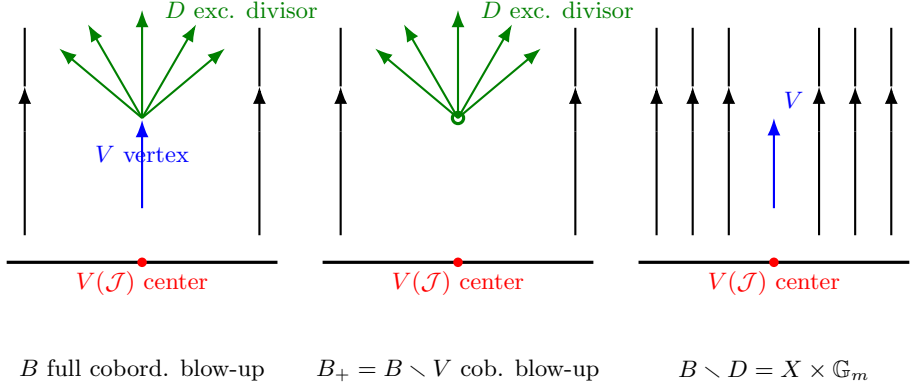

\begin{remark}
The \emph{cobordant blow-up} from   \cite{ W22,W23}   is the natural extension of the stack-theoretic weighted blow-up as in \cite{ATW-weighted} and is obtained by directly generalizing the classical extended Rees algebra for the ideal of a smooth center, (see \cite{Rees56,Eis95,HS06})
\[
\cO_X[t^{-1},\, t\cI],
\]
to the weighted setting, providing a smooth birational model for weighted blow-ups.  
Its key features are:

\begin{enumerate}
\item \textbf{Birational-cobordism interpretation:}  
   The morphism \(B\to X\times \bA^1\) is a smooth birational cobordism in the sense used in the proof of the Weak Factorization Theorem \cite{W00} \cite{W03} and presents the weighted blow-up within a unified birational framework.  
   In fact, the induced morphism \(B_+/G_m \to X\) is the ordinary weighted blow-up.

\item \textbf{Smoothness of \(B\):}  
   The total space \(B\) is nonsingular, which yields a simple description of the transforms of ideals, vector fields, and related structures directly on $B$.  
   Since the morphism \(B\to X\) is affine, one works in a single chart with coordinates \(t^{-1}, x_1 t^{w_1},\ldots,x_k t^{w_k}\), which is advantageous even compared to the standard smooth blow-up.  
   The construction was designed for applications beyond the classical case, including positive characteristic and foliations.

\item \textbf{Relation with Cox rings:}  
   The cobordant blow-up admits a natural formulation via Cox rings of the underlying birational morphism, extending the toric picture to more general settings.\cite{W23b}

\item \textbf{Comparison with deformation to the normal cone:}  
   The secondary projection \(B \to \AA^1\) induces a deformation to the normal bundle, giving a certain resemblance to the Fulton--MacPherson deformation  
   \(\mathrm{Bl}_J(X \times \PP^1)\to\PP^1\), where \(\cJ\) is the ideal of the point \(p\in X\times\{\infty\}\), particularly in the classical case where all weights are equal to \(1\). (\cite{Fulton}) 
   The purposes, however, are fundamentally different: the cobordant blow-up is \emph{birational} in nature and provides a smooth presentation of the weighted blow-up, whereas the Fulton--MacPherson construction is \emph{deformational}, designed for normal-cone deformations and not suited to producing smooth weighted models.  
   Indeed, for weighted centers the space \(\mathrm{Bl}_J(X \times \PP^1)\) is typically singular, in contrast with the smooth total space \(B\). On the other hand the local chart of this construction produces the extended Rees algebra as mentioned above which is the language suitable for weighted generalization.
\end{enumerate}


\end{remark}

\subsection{Cobordant blow-ups at the maximal admissible centers}
Let 
\[
\cJ=(x_1^{a_1},\ldots,x_k^{a_k})
\]
be a maximal admissible center.
The admissibility condition can be expressed in terms of the extended
Rees algebra
\[
\cI t \subset 
\cA_\cJ^{\ext}
=
\cO_X\!\left[
  t^{-1/w},\, x_1 t^{1/a_1}, \ldots, x_k t^{1/a_k}
\right].
\]

Rescaling the parameter $t \mapsto t^{w}$, where
$w$ is a multiple of
\[
w_A=\lcm(a_1,\ldots,a_k),
\]
one obtains the algebra of the full cobordant blow-up along $\cJ$:
\[
\cO_B
=
\cO_X\!\left[
  t^{-1},\, x_1 t^{w_1}, \ldots, x_k t^{w_k}
\right],
\qquad
w_i=\frac{w}{a_i}.
\]
In this algebra we have
\[
\cI \cdot t^{w} \subset \cO_B .
\]

The total transform of $\cI$ in $\cO_B$ is 
\[
\cO_B \cdot \cI .
\]
Factoring out the exceptional divisor defined by $t^{-1}$, 
equivalently dividing by the corresponding power of $t^{-1}$,
the ideal
\[
\cO_B \cdot (\cI t^{w}) \subset \cO_B
\]
is the \emph{controlled transform} of $\cI$.\subsection{Maximal contact and coefficient ideal}
Denote by $\mathcal{D}_X$, and more generally by $\mathcal{D}_X^i$ for $i \ge 0$, 
the sheaf of derivations (respectively, differential operators of order $\le i$) 
on a smooth variety $X$. 

For an ideal sheaf $\mathcal{I}$ and an integer $i \ge 0$, 
let $\mathcal{D}_X^i(\mathcal{I})$ be the ideal generated by 
$D(f)$, where $f \in \mathcal{I}$ and $D \in \mathcal{D}_X^i$.

\medskip
\noindent
\textbf{Maximal admissibility relation.}
Consider
\[
\cI t^a \subset \cO_X[m_p t],
\qquad
a=\ord_p(\cI).
\]
Acting by $t^{-1}\cD_X = \mathrm{span}(t^{-1}\partial_{x_i})$ 
enlarges the left-hand side while preserving the right-hand side
\[
\cO_X[m_p t]
=
\cO_X \oplus m_p t \oplus m_p^2 t^2 \oplus \cdots .
\]
In degree $1$ this gives
\[
\cD_X^{a-1}(\cI)t
\subset
m_p t
=
(x_1,\ldots,x_n)t.
\]
Hence there exists a coordinate
\[
x_1 \in \cD_X^{a-1}(\cI),
\]
called a \emph{single Hironaka maximal contact}.

\medskip
\noindent
\textbf{Maximal contact for weighted admissibility.}
Consider now the maximal admissibility relation
\[
\cI t
\subset
\cA_\cJ
=
\cO_{X,p}
[x_1 t^{1/a_1},\ldots,x_k t^{1/a_k}]^{\mathrm{int}} .
\]
By maximality,
\[
a_1=\ord_p(\cI)\le a_2\le\cdots\le a_k.
\]
Acting by the rescaled derivations $t^{-1/a_1}\cD_X$ 
again enlarges the left-hand side while preserving the right-hand side. 
As above, we obtain a distinguished graded coordinate
\[
x_1 t^{1/a_1}
\in
T(\cI)
:=
\cD_X^{a_1-1}(\cI)\, t^{1/a_1}
\subset
\cA_\cJ,
\]
where $T(\cI)$ is called the \emph{cotangent ideal} of $\cI$.

Thus (after a coordinate change) we construct the first graded coordinate 
$x_1 t^{1/a_1}$ of the maximal admissible center.

\medskip
More generally, in the absence of divisors, one obtains a {\it maximal contact} in the form
\[
\overline{x_1}t^{1/a_1}
:=
(x_{11}t^{1/a_1},\ldots,x_{1k_1}t^{1/a_1}),
\]
where $(x_{11},\ldots,x_{1k_1})$ is a maximal partial system of local parameters contained in the cotangent ideal
\[
T(\cI)=\cD_X^{a_1-1}(\cI).
\]\subsection{Coefficient ideals}\cite{ W22}

Given any Rees algebra $R=\bigoplus R_at^a=\cO_X[f_jt^{b_j}]$
the  {\it coefficient ideal} $C_{\overline{x}t^{1/a_1}}(R)$  of  $R$ with respect to $\overline{x}_1t^{1/a}$ is   a Rees $\cO_{V(\overline{x}_1)}$- algebra generated  by  the graded coefficients  of the generators write $f_j$ in $\widehat{\cO}_{X,{V(\overline{x}_1)}}$  as $$f_j\equiv \sum c_{j\alpha} \overline{x}^\alpha =\sum_{  |\alpha|<  b_j a_1} c_{j\alpha} \overline{x}^\alpha + \sum_{  |\alpha|\geq  b_j a_1} c_{j\alpha} \overline{x}^\alpha \in \widehat{\cO}_{X,V(\overline{x}_1)}$$  

\[
C_{\overline{x} t^{1/a_1}}(R) := \mathcal{O}_{V(\overline{x}_1)}\left[c_{j\alpha|{V(\overline{x}_1)}} \, t^{b_j - |\alpha|/a_1} \,\big|\, |\alpha| < b_j a_1 \right],
\]
for the coordinate system $(\overline{x}_1,y)$.
\begin{proposition}{Reduction Property of the Coefficient Ideal} \cite{ W22}
\begin{itemize}
    \item Let $R$ be the Rees algebra of order $a_1$ at a point $p \in X$, with maximal contact given by $\overline{x}_1 t^{1/a_1}$.
    \item Let $\mathcal{A} = \mathcal{O}_X$ be the maximal center for $R$.
\end{itemize}

Then:
\begin{itemize}
    \item The restriction $\mathcal{A}_{|V(\overline{x}_1)}$ is a maximal admissible center for the coefficient ideal $C_{\overline{x}_1 t^{1/a_1}}(R)$.
    \item Moreover, we can write:
    \[
    \mathcal{A} = \mathcal{O}_X\left[\overline{x}_1 t^{1/a_1}, \mathcal{A}_{|V(\overline{x}_1)}\right]^{\mathrm{int}}
    \]
\end{itemize}
\end{proposition}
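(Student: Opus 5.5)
The plan is to reduce everything to a monomial comparison in a coordinate system where the maximal contact $\overline{x}_1$ is part of the center. Read the hypothesis as saying that $\mathcal{A}$ is the maximal admissible center of $R$ at $p$ (the equality ``$\mathcal{A}=\mathcal{O}_X$'' in the statement I take to be a misprint for this). Write
\[
\mathcal{A}=\mathcal{O}_{X,p}\bigl[x_1t^{1/a_1},\ldots,x_kt^{1/a_k}\bigr]^{\mathrm{int}},\qquad a_1=\ord_p(R)\le a_2\le\cdots\le a_k .
\]

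\textbf{Step 1: put the maximal contact into the center.} The operators $t^{-1/a_1}\mathcal{D}_X$ enlarge $R$ while preserving $\mathcal{A}$, as in the discussion of maximal contact for weighted admissibility. Hence the cotangent ideal $T(R)=\mathcal{D}_X^{a_1-1}(R)\,t^{1/a_1}$ lies in $\mathcal{A}$, and in particular in its degree-$1/a_1$ part. By the integral-closure description of $\mathcal{A}$, this part is spanned modulo $m_p^2$ by the $x_i$ with $a_i=a_1$. So the parameters $x_{11},\ldots,x_{1k_1}$ of $\overline{x}_1$ can replace $k_1$ of the weight-$a_1$ coordinates of the center without changing $\mathcal{A}$.

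Conversely, there can be no further weight-$a_1$ coordinates. Each such $x_i$ satisfies $x_i t^{1/a_1}\in\mathcal{A}$, and I expect maximality of the lex vector to force $x_i$ into $\mathcal{D}^{a_1-1}(R)$, contradicting the maximality of the system $\overline{x}_1$. After this coordinate change,
\[
\mathcal{A}=\mathcal{O}_X\bigl[\overline{x}_1t^{1/a_1},\,y_2t^{1/a'_2},\ldots\bigr]^{\mathrm{int}}\quad\text{with } a'_j>a_1 .
\]
I expect this step to be the main obstacle, since it requires the stability of $\mathcal{A}$ under $t^{-1/a_1}\mathcal{D}_X$ to be used in both directions. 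My first attempt at the converse would be to apply $t^{-(a_1-1)/a_1}\mathcal{D}^{a_1-1}$ to a generator of $R$ of order exactly $a_1$. Its image has a linear term, and the weight count then shows that this linear term must lie in the span of the coordinates of weight $a_1$.

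\textbf{Step 2: admissibility is a coefficientwise condition.} Pass to $\widehat{\mathcal{O}}_{X,V(\overline{x}_1)}\cong\widehat{\mathcal{O}}_{V(\overline{x}_1)}[[\overline{x}_1]]$. Since completion is faithfully flat and integral closure of monomial algebras commutes with it, membership in $\mathcal{A}$ can be tested there. The algebra $\mathcal{A}$ is graded monomial: $c(y)\,\overline{x}_1^{\alpha}t^{b}\in\mathcal{A}$ if and only if $c\,t^{\,b-|\alpha|/a_1}\in\mathcal{A}_{|V(\overline{x}_1)}$. When $|\alpha|\ge b\,a_1$ the exponent $b-|\alpha|/a_1$ is $\le 0$, so this condition holds automatically. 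Expanding each generator as $f_jt^{b_j}=\sum_\alpha c_{j\alpha}\overline{x}_1^\alpha t^{b_j}$ therefore gives
\[
R\subset\mathcal{A}\iff C_{\overline{x}_1t^{1/a_1}}(R)\subset\mathcal{A}_{|V(\overline{x}_1)} .
\]
The same argument gives the equivalence for any center of the form $\mathcal{O}_X[\overline{x}_1t^{1/a_1},\widetilde{\mathcal{B}}]^{\mathrm{int}}$, where $\mathcal{B}$ is a weighted center on $V(\overline{x}_1)$ and $\widetilde{\mathcal{B}}$ is a lift of its coordinates.

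\textbf{Step 3: maximality transfers, and the decomposition follows.} Every $C$-admissible center $\mathcal{B}$ on $V(\overline{x}_1)$ lifts, by Step 2, to an $R$-admissible center $\mathcal{O}_X[\overline{x}_1t^{1/a_1},\widetilde{\mathcal{B}}]^{\mathrm{int}}$. Its exponent vector is $(a_1,\ldots,a_1,\text{exponents of }\mathcal{B})$, with $a_1$ repeated $k_1$ times. By Step 1, $\mathcal{A}$ has exactly this shape, and the lexicographic order compares such vectors through their tails. Hence lex-maximality of $\mathcal{A}$ for $R$ is equivalent to lex-maximality of $\mathcal{A}_{|V(\overline{x}_1)}$ for $C$. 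This gives the first claim. The displayed identity $\mathcal{A}=\mathcal{O}_X[\overline{x}_1t^{1/a_1},\mathcal{A}_{|V(\overline{x}_1)}]^{\mathrm{int}}$ is then just the form of $\mathcal{A}$ obtained in Step 1.
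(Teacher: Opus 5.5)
\paragraph{Verdict.}
The paper gives no proof of this proposition; it only cites \cite{W22}. Apart from one claim, your argument is correct and follows the standard route for this reduction. Stability of $\mathcal{A}$ under $t^{-1/a_1}\mathcal{D}_X$ puts $T(R)$ inside $\mathcal{A}_{1/a_1}t^{1/a_1}$, so $\overline{x}_1$ can be made part of the weight-$a_1$ block. Admissibility of $\mathcal{O}_X[\overline{x}_1t^{1/a_1},\widetilde{\mathcal{B}}]^{\mathrm{int}}$ is then a coefficientwise condition, and lexicographic maximality passes to the tails.

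The replacement in Step 1 needs one line of justification. The graded pieces of $\mathcal{A}$ are the ideals $\{\nu\ge b\}$ of the monomial valuation with $\nu(x_i)=1/a_i$. This valuation does not change if you swap weight-$a_1$ coordinates for elements of $\nu$-value $\ge 1/a_1$, provided their linear parts, together with those of the untouched coordinates, stay independent.

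\paragraph{The converse in Step 1.}
The claim that fails is that $\mathcal{A}$ has no weight-$a_1$ coordinates beyond $\overline{x}_1$. The principle you invoke for it is false as stated: a weight-$a_1$ coordinate of the maximal center need not lie in $\mathcal{D}^{a_1-1}(R)$. For $\mathcal{I}=(x^2+y^3+z^3)$ the center is $(x^2,y^3,z^3)$. Since $\nu(yz)=2/3\ge 1/2$, the element $x+yz$ is a legitimate weight-$2$ coordinate of this center. Yet $x+yz\notin\mathcal{D}^1(\mathcal{I})=(x,y^2,z^2)$. At best a statement about linear parts modulo $m_p^2$ could hold, and your sketch does not give even that. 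Differentiating a generator of order $a_1$ and reading off its linear term shows that linear parts of $T(R)$ lie in the weight-$a_1$ span. That is the direction you already have, not its converse.

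\paragraph{Why you do not need it.}
Replace $a'_j>a_1$ by $a'_j\ge a_1$, which holds because $a_1$ is the smallest weight of $\mathcal{A}$. Then in Step 3, any $C$-admissible $\mathcal{B}$ with $\inv(\mathcal{B})>\inv(\mathcal{A}_{|V(\overline{x}_1)})$ has all weights $\ge a_1$. This gives three things.
\begin{enumerate}
\item The lift of $\mathcal{B}$ does not depend on the choice of lifted coordinates. Two choices differ by elements of $(\overline{x}_1)$, which have $\nu$-value $\ge 1/a_1$. In particular it agrees with the lift compatible with the splitting used to define $C$, so Step 2 applies to it.
\item Its exponent vector really is $(a_1,\ldots,a_1,\inv(\mathcal{B}))$.
\item The same remark shows that $\mathcal{A}$ equals the lift of $\mathcal{A}_{|V(\overline{x}_1)}$, which is the displayed identity.
\end{enumerate}

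\paragraph{Recovering the strict inequality.}
If you want $a'_j>a_1$, it follows after the proposition. The first weight of $\mathcal{A}_{|V(\overline{x}_1)}$ is $\operatorname{ord}_p(C)$. Suppose some coefficient $c_{j\alpha}$ with $|\alpha|<a_1b_j$ had order exactly $a_1b_j-|\alpha|$. Then the degree-$a_1b_j$ initial form of $f_j$ would involve a variable outside $\overline{x}_1$. In characteristic zero, some derivative of $f_j$ of order $a_1b_j-1$ would then have a linear part independent of $\overline{x}_1$. That derivative gives an element of $T(R)$ contradicting the maximality of $\overline{x}_1$. Hence $\operatorname{ord}_p(C)>a_1$.
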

Consequently we have a very simple inductive formula
\begin{proposition}\cite{ W22}
$$\inv_p(\cR)=(\ord_p(\cR),\inv_p(C_{x_1t^{1/a_1}}(R)),$$
where $a_1=\ord_p(\cR),$ and $x_1t^{1/a_1}$ is a single maximal contact.
\end{proposition}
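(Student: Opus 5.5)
We establish the inductive formula $\inv_p(\cR)=(a_1,\inv_p(C_{x_1t^{1/a_1}}(\cR)))$ by comparing the two sides through the Reduction Property of the coefficient ideal stated just above. The maximal admissible center $\cA$ of $\cR$ at $p$ is unique, and its first graded coordinate can be taken to be a single maximal contact $x_1t^{1/a_1}$ with $a_1=\ord_p(\cR)$. This was shown in the discussion of maximal contact: the rescaled derivations $t^{-1/a_1}\cD_X$ preserve $\cA_\cJ$, and this forces a parameter $x_1\in T(\cR)$ into degree $1/a_1$. Consequently $\inv_p(\cR)$ begins with $a_1$, because admissibility forces $a_1\le \ord_p(\cR)$, while the center $(x_1^{a_1},\dots)$ built from maximal contact realizes equality.

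Next we invoke the Reduction Property. It gives
$$\cA=\cO_X[x_1t^{1/a_1},\cA_{|V(x_1)}]^{\inte},$$
with $\cA_{|V(x_1)}$ the maximal admissible center of $C_{x_1t^{1/a_1}}(\cR)$ on $V(x_1)$. If $\cA_{|V(x_1)}=\cO_{V(x_1)}[x_2t^{1/a_2},\dots,x_kt^{1/a_k}]^{\inte}$ in coordinates on $V(x_1)$, we lift these coordinates to $X$. Then $\cA=\cO_X[x_1t^{1/a_1},\dots,x_kt^{1/a_k}]^{\inte}$, so the exponent vector of $\cA$ is $(a_1,a_2,\dots,a_k)$, where $(a_2,\dots,a_k)=\inv_p(C_{x_1t^{1/a_1}}(\cR))$. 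Since $\cA$ is maximal admissible, its exponent vector equals $\inv_p(\cR)$, which gives the formula.

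Two compatibility checks remain. First, we must have $a_1\le a_2$, so that the concatenated vector is correctly ordered. This holds because every generator $c_{j\alpha}t^{b_j-|\alpha|/a_1}$ of the coefficient ideal has order at least $a_1(b_j-|\alpha|/a_1)$. That bound comes from $f_jt^{b_j}\in\cA$ and the nonnegativity of the weights, and it gives $\ord_p(C)\ge a_1$. Second, the right-hand side must be independent of the choice of single maximal contact. This follows because the left-hand side is intrinsic, so once the equality is proved for one choice, independence is automatic.

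The main obstacle is the lifting step: $\cO_X[x_1t^{1/a_1},\cA_{|V(x_1)}]^{\inte}$ must be a genuine weighted center with lifted coordinates, and lexicographic maximality must transfer. Here we lean entirely on the cited Reduction Property rather than reprove it. If we needed more, we would argue via formal completion along $V(x_1)$: write any admissible $\cA'$ for $\cR$ with first weight $a_1$. Its restriction to $V(x_1)$ is then admissible for $C$, by expanding the generators $f_j$ in powers of $x_1$. This shows that the lexicographic comparison for $\cR$ reduces to the comparison for $C$.
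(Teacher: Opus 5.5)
Your proposal is correct and follows the paper's route. The paper gives no separate argument: it presents the formula as an immediate consequence (``Consequently'') of the Reduction Property cited from \cite{W22}, and your step of lifting the coordinates of the maximal admissible center of $C_{x_1t^{1/a_1}}(\cR)$ and reading off the exponent vector of $\cA=\cO_X[x_1t^{1/a_1},\cA_{|V(x_1)}]^{\inte}$ is exactly that deduction. The Reduction Property is stated in the paper for the multiple contact $\overline{x}_1t^{1/a_1}$, and like the paper you apply it implicitly to a single contact $x_1t^{1/a_1}$; your extra checks ($\ord_p(C)\ge a_1$, independence of the choice of contact) are harmless but not needed.
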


\subsection{Logarithmic resolution} \label{sec:invariant}

Before passing to the logarithmic setting, recall that the invariant 
$\inv_p(\mathcal{I})$  was originally in\cite{ATW-weighted}
considered for regular schemes without boundary divisors in a slightly different language.  
In the presence of a simple normal crossings divisor
\[
   D=\bigcup_i D_i,
\]
this invariant admits a natural logarithmic refinement  developed in ~\cite{ W22,W23}, 
which is 
compatible with the previously considered  construction used in the boundary-free case.

Choose local coordinates $(x_1,\ldots,x_n)$ {\it adapted to $D$}, such that each component $D_i$ 
is locally given by $x_j=0$ for some $j$.  
We distinguish:
\begin{itemize}
  \item \emph{free} coordinates $x_i$ (not vanishing along $D$), and
  \item \emph{divisorial} coordinates $x_j$ defining components of $D$.
\end{itemize}

To reflect that divisorial variables behave as heavier components, associate 
to the weights $a_i$ the symbols
\[
   b_j =
   \begin{cases}
     a_j, & \text{if } x_j \text{ is free},\\[4pt]
     a_{j+}, & \text{if } x_j \text{ is divisorial},
   \end{cases}
\]
where $a_{j+}$ is a formal symbol satisfying
\[
a_j < a_{j+} < b
\quad \text{for every } b>a_j \text{ in } \mathbb{R}.
\]
\begin{definition}[Canonical invariant {\cite[Def.~3.1.7]{ W22,W23}}]
\label{cntr}
Let $X$ be a regular scheme, with an SNC divisor $E$ and let $\mathcal{I}$ be an ideal sheaf, or more 
generally let $\mathcal{R}$ be a Rees algebra on $X$.  

A \emph{weighted center} at a point $p\in X$ is a Rees algebra of the form
\[
   \mathcal{A}_J
   = \mathcal{O}_{X,p}
     \bigl[x_1 t^{1/a_1},\ldots,x_k t^{1/a_k}\bigr]^{\mathrm{int}},
\]
where $a_1\le\cdots\le a_k$ are positive rational numbers and 
$(x_1,\ldots,x_k)$ is a regular system of parameters at $p$ adapted to $E$.

We say that $\mathcal{A}_J$ is \emph{$\mathcal{I}$-admissible} 
(resp.\ \emph{$\mathcal{R}$-admissible}) at $p$ if
\[
   \mathcal{I}_p\, t \subset \mathcal{A}_{J,p},
   \qquad
   \text{resp. } 
   \mathcal{R}_p \subset \mathcal{A}_{J,p}.
\]

The \emph{canonical logarithmic invariant} at $p$ is defined by
\[
   \inv_p(\mathcal{I})
   \ (\text{resp.\ } \inv_p(\mathcal{R}))
   :=
   \max_{\mathrm{lex}}
   \left\{
     (b_1,\ldots,b_k)
     \,\middle|\,
     \mathcal{A}_J
     \text{ is admissible at } p
   \right\}.
\]

The corresponding center
$
   J=(x_1^{a_1},\ldots,x_k^{a_k})
$
and the associated weighted algebra $\mathcal{A}_J$
are called \emph{maximal admissible centers} if their exponent vector
equals the canonical invariant.
\end{definition}

Centers and the invariant can be presented more compactly as:
\[ \cA_\cJ = \cO_X[\overline{x}_1t^{1/a_1}, \ldots, \overline{x}_kt^{1/a_k}]^\inte, \]
with blocks \( \overline{x}_i = (x_{i1}, \ldots, x_{ik_i}) \), where $a_1<\ldots<a_k$  and associated weights \( \overline{b}_i :=(\underbrace{a_i,\ldots,a_i}_{\text{free}}, \underbrace{a_{i+},\ldots,a_{i+}}_{\text{divisorial}})
 \).
Consequently for an ideal $\cI$ we have
\[ \inv_p(\cI) = \max \{ (\overline{b}_1, \ldots, \overline{b}_k) \mid \cI t \subseteq \cA_\cJ \}. \]
For Rees algebra $R$, we repectively write the invariant:
\[ \inv_p(R) := \max \{ (\overline{b}_1, \ldots, \overline{b}_k) \mid R \subseteq \cA_\cJ \}. \]

The centers $\cA_\cJ$ (or respectively $\cJ$ in the $\QQ$-ideal form) for which the maximum is unique is called {\it maximal admissible center} for an ideal $\cI$ or a Rees algebra $R$. These centers are used in the weighted logarithmic resolution as well as in our partial resolution. 
Here the part $\overline{x}_1t^{1/a_1}$ of a maximal admissible center is a maximal contact. It can be computed directly from Rees algebra $R$ or ideal $\cI$.

For a Rees center $\mathcal A_{\mathcal J}$, define
\[
\inv(\mathcal J)
=\inv(\mathcal A_{\mathcal J})
=(\overline b_1,\ldots,\overline b_k),
\qquad
\inv^1(\mathcal A_{\mathcal J})=\overline b_1,
\qquad
\ord(\mathcal A_{\mathcal J})=a_1.
\]

Note that $\mathcal J$ is maximal admissible for $\mathcal I$
(resp.\ for $R$) at $p\in X$ if and only if
\[
\inv(\mathcal J)=\inv_p(\mathcal I)
\quad
(\text{resp.\ }\inv(\mathcal J)=\inv_p(R)).
\]

Define
\[
\inv^1_p(R)
:= \max\bigl\{\inv^1(\mathcal \cJ)\mid R_p\subseteq \mathcal A_{\cJ,p}\bigr\}.
\]
This provides a useful first approximation to the invariant $\inv$.
It extends the notion of the order of the Rees algebra $R$ at $p$,
defined by
\[
\ord_p(R)
:= \min_{a{>0}}
\left\{\frac{\ord_p(R_a)}{a}\right\}.
\]

In particular,
\[
\ord_p(R)=\ord(\mathcal A_{\mathcal J})=a_1,
\]
where $\mathcal {\mathcal J}
$
is a maximal admissible center for $R$.
Let \( R = \bigoplus R_b \) be a Rees algebra of order \( a_1 \) at a point \( p \in X \), so that \( R_b \subseteq m_p^{{\lceil b a_1 \rceil}} \) for all \( b \). Then the associated graded algebra of \( R \) at \( p \) \[
\gr_p(R) := \bigoplus_{ba_1\in \NN} \frac{R_b + m_p^{{ b a_1 } + 1}}{m_p^{{ b a_1 } + 1}} t^b 
\subseteq\kappa(p)\left[\left( \frac{m_p}{m_p^2} \right) t^{1/a_1} \right],
\]
where $\kappa(p)$ is the residue field at $p\in X$.
This generalizes the  standard construction of $\gr_p(\cI)$.
We shall use in this paper \begin{lemma}\cite[Lemma 2.4.27]{ W22}. \label{gr}If $R$ has order $a_1$ at $p$, then:
\begin{itemize}
  \item $\ord_p(R) = \ord_p(\gr_p(R))$,
  \item $\inv_p^1(R) = \inv_p(\gr_p(R)) = \inv_p^1(\gr_p(R))$.
\end{itemize}
\end{lemma}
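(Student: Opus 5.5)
The statement to prove is Lemma~\ref{gr}: if $R$ has order $a_1$ at $p$, then $\ord_p(R)=\ord_p(\gr_p(R))$ and $\inv^1_p(R)=\inv_p(\gr_p(R))=\inv^1_p(\gr_p(R))$. Throughout, $\gr_p(R)$ is viewed as a Rees algebra on the tangent space $T_p=\Spec\kappa(p)[m_p/m_p^2]$, at its origin. The plan is to reduce both statements to the comparison of leading forms.

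\textbf{Equality of orders.} Since $R_b\subseteq m_p^{\lceil ba_1\rceil}$ for all $b$, the graded pieces of $\gr_p(R)$ are homogeneous forms of degree $ba_1$ in the linear coordinates of $T_p$. Hence $\ord(\gr_p(R))\ge a_1$. For the reverse inequality, pick $b$ realizing the minimum in $\ord_p(R)=\min_b\ord_p(R_b)/b$. This minimum is attained, since $R$ is finitely generated and the ratio is controlled on generators. Then some $f\in R_b$ has $\ord_p f=ba_1$ exactly, and its initial form $\init(f)$ is a nonzero element of $\gr_p(R)_b$. This gives $\ord(\gr_p(R))\le a_1$, hence equality.

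\textbf{Description of $\inv^1$ via leading forms.} By the maximal contact discussion, a center $\cA_\cJ$ containing $R$ with first weight $a_1$ has first block $\overline{x}_1t^{1/a_1}$, where $\overline{x}_1$ is a partial system of parameters. The next weights are $>a_1$, or equal to $a_1$ but divisorial, which gives $a_{1+}$. For such a center, the elements of $R_b$ of $m_p$-order exactly $ba_1$ have leading forms in $\kappa(p)[\overline{x}_1]$. This holds because monomials involving the higher-weight variables have weighted degree $ba_1$ only if their ordinary degree is $<ba_1$, which is impossible. Consequently $\overline b_1$ records the following data: the minimal number $k_1$ of linear forms (free ones first, then divisorial ones, lexicographically) needed to express all degree-$ba_1$ leading forms of $R$. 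By definition this is exactly the data in $\gr_p(R)$.

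Conversely, given linear forms $\ell_1,\dots,\ell_{k_1}$ in which $\gr_p(R)$ is expressed, lift them to parameters $x_{1i}$. The goal is then a center $\cO_X[x_{1i}t^{1/a_1}, \text{others }t^{1/a'}]$ with $a'>a_1$ containing $R$. This is the key step. I would argue it as follows: for $f\in R_b$, write $f=\init(f)+(\text{higher order})$. The higher order terms lie in $m_p^{ba_1+1}$, and this is contained in the center provided $a'$ is chosen close enough to $a_1$. The finitely many generators of $R$ make such a choice possible uniformly. When the forms $\ell_i$ include divisorial coordinates, choose the lifts adapted to $E$, which is possible since $E$ is SNC.

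\textbf{The graded algebra and the main obstacle.} For $\gr_p(R)$, which is homogeneous with respect to the linear coordinates of $T_p$, the same reasoning gives $\inv_p(\gr_p(R))$. Since $\gr_p(R)$ lives in the subring generated by $(\ell_i)t^{1/a_1}$, its full invariant is just $\overline b_1$. This yields $\inv_p(\gr)=\inv_p^1(\gr)=\inv_p^1(R)$.

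The main obstacle is the lifting step: maximality with respect to the lexicographic order, including the divisorial symbols $a_{1+}$, must coincide up and down. In particular we must check that divisorial linear forms in $\gr$ correspond to adapted parameters. Here the SNC assumption and the adaptedness of the coordinates of $T_p$ to the induced divisor should suffice.
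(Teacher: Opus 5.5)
The paper gives no proof of this lemma; it is quoted from \cite[Lemma 2.4.27]{W22}, so there is no internal argument to compare with. Your proposal is a correct, self-contained proof. Its core is to show that admissible centers of first weight $a_1$ have the same possible first blocks for $R$ at $p$ and for $\gr_p(R)$ at the origin of $T_p$. These are exactly the $E$-adapted blocks of linear forms $\overline\lambda$ with $\gr_p(R)\subseteq\kappa(p)[\overline\lambda\, t^{1/a_1}]$, counted by their free and divisorial members. This closes the chain $\inv^1_p(R)\le\inv_p(\gr_p R)\le\inv^1_p(\gr_p R)\le\inv^1_p(R)$.

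The paper's maximal-contact section suggests a different route: read the first block off the cotangent ideal, whose linear parts come from derivatives of order $ba_1-1$ of leading forms. Your route avoids differential operators entirely. It uses only the monomial description of $(\cA_\cJ)_b$ and finite generation of $R$, which is what lets you choose the auxiliary weight $a'>a_1$. It also treats the divisorial entries $a_{1+}$ by the same argument as the free ones, which the derivative route does not do directly.

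A few points should be tightened.
\begin{enumerate}
\item Your degree count is stated backwards. A monomial $x^\alpha z^\beta\in(\cA_\cJ)_b$ with $\beta\neq 0$ satisfies $|\alpha|+|\beta|>a_1\bigl(|\alpha|/a_1+\sum_j\beta_j/a_{(j)}\bigr)\ge ba_1$. That strict inequality is why such a monomial does not contribute to the degree-$ba_1$ leading form.
\item Say explicitly that no admissible center has first weight $>a_1$, and that first weight $<a_1$ gives a strictly smaller $\inv^1$, since $a'_+<a_1$ for $a'<a_1$.
\item Say explicitly that $\inv\le\inv^1$ always, because a proper prefix is lexicographically larger (the convention of \cite{ATW-weighted}; without it the maxima defining $\inv$ would not exist). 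This is what makes the single-block center realize $\inv_p(\gr_p R)$.
\item Centers on $T_p$ may use nonlinear parameters. Since $\gr_p(R)$ is homogeneous, your leading-form argument at the origin shows that $\gr_p(R)$ already lies in the algebra generated by their linear parts times $t^{1/a_1}$.
\item In the lifting step, you must lift the leading form to a polynomial in $\overline{x}_1$ with coefficients in $\mathcal{O}_{X,p}$. A generator $f_jt^{b_j}$ then imposes $a'\le a_1+1/b_j$ if $b_ja_1\in\NN$, and $a'\le\lceil b_ja_1\rceil/b_j$ otherwise. Both bounds exceed $a_1$.
\item The divisorial check you flag as the main obstacle is immediate. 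Divisorial linear forms on $T_p$ are scalar multiples of the $\overline{x}_{E_j}$, so lift them to $x_{E_j}$. Free ones are linearly independent modulo all divisorial forms, so any lifts, together with the $x_{E_j}$, extend to an adapted system of parameters at $p$.
\end{enumerate}
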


Moreover we can write in the logarithmic setting
\begin{proposition}
$$\inv_p(\cR)=(\inv^1_p(\cR),\inv_p(C_{\overline{x}_1t^{1/a_1}}(R)),$$
where $a_1=\ord_p(\cR),$ and $\overline{x}_1t^{1/a_1}$ is a  maximal contact of $R$.
\end{proposition}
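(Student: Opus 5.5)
The statement to prove is the logarithmic inductive formula
\[
\inv_p(\cR)=\bigl(\inv^1_p(\cR),\,\inv_p(C_{\overline{x}_1t^{1/a_1}}(R))\bigr).
\]
I will reduce it to the two facts already available: the Reduction Property of the Coefficient Ideal and Lemma~\ref{gr}. The plan is to fix a maximal admissible center
\[
\cA_\cJ=\cO_X\bigl[\overline{x}_1t^{1/a_1},\ldots,\overline{x}_kt^{1/a_k}\bigr]^{\inte}
\]
for $R$ at $p$, in coordinates adapted to $E$. Its first block $\overline{x}_1t^{1/a_1}$ is a maximal contact, and its first weight block $\overline{b}_1$ is by definition $\inv^1(\cA_\cJ)$. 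The proof then has two halves: identify the first block with $\inv^1_p(R)$, and identify the remaining blocks with the invariant of the coefficient ideal.

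\textbf{First half.} I show $\inv^1(\cA_\cJ)=\inv^1_p(R)$. The inequality $\le$ is immediate from the definition of $\inv^1_p$ as a maximum over admissible centers. For $\ge$, I pass to the graded algebra. By Lemma~\ref{gr}, $\inv^1_p(R)=\inv_p(\gr_p(R))$. The algebra $\gr_p(R)$ lives in the polynomial ring over $\kappa(p)$ on $m_p/m_p^2$, and in degree $1/a_1$ it is spanned by the initial forms of $T(R)$.

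Its maximal admissible center is determined by its linear part. The free directions come from the linear forms in $T(R)$. The divisorial directions are the divisorial coordinates $x_j$ whose initial forms lie in that span; these receive the symbol $a_{1+}$.

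A lexicographically larger first block for $R$ would produce, after taking initial forms, a larger admissible first block for $\gr_p(R)$, which is a contradiction. So the lexicographic maximum over all centers is realized by $\cA_\cJ$, and its first block is exactly $\overline{b}_1=\inv^1_p(R)$.

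\textbf{Second half.} I apply the Reduction Property: $\cA_{|V(\overline{x}_1)}$ is a maximal admissible center for $C_{\overline{x}_1t^{1/a_1}}(R)$, and
\[
\cA=\cO_X\bigl[\overline{x}_1t^{1/a_1},\cA_{|V(\overline{x}_1)}\bigr]^{\inte}.
\]
The restricted center has weight vector $(\overline{b}_2,\ldots,\overline{b}_k)$. The divisorial/free labels are inherited because $E$ restricts to an SNC divisor on $V(\overline{x}_1)$ in adapted coordinates. Hence $\inv_p(C_{\overline{x}_1t^{1/a_1}}(R))=(\overline{b}_2,\ldots,\overline{b}_k)$, and concatenating with the first half gives the formula.

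\textbf{Main obstacle: independence of choices.} The right-hand side must not depend on the choice of maximal contact $\overline{x}_1$. In the logarithmic case $\overline{x}_1$ may contain divisorial coordinates, and these cannot be moved freely by coordinate changes.

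I would handle this in two steps. First, two maximal contacts differ by an automorphism preserving $E$ that is congruent to the identity modulo the ideal generated by the block. Second, the coefficient ideals with respect to two such contacts are then related by this automorphism restricted to $V(\overline{x}_1)$, up to integral closure. This follows by expanding the generators $f_j$ in $\widehat{\cO}_{X,V(\overline{x}_1)}$ and checking that terms with $|\alpha|\ge b_ja_1$ do not contribute.

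Since $\inv_p$ is invariant under automorphisms preserving $E$, the two invariants agree, and the formula is well defined.
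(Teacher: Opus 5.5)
The paper gives no separate proof of this formula. As with its non-logarithmic version (introduced by ``Consequently''), it is recorded as an immediate consequence of the Reduction Property of the coefficient ideal, and your second half is exactly that deduction. Your two auxiliary parts, however, do not hold up as written, and neither is needed.

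\textbf{First half.} The contradiction you invoke is circular. Lemma~\ref{gr} only gives $\inv^1_p(R)=\inv_p(\gr_p(R))$. A larger admissible first block for $\gr_p(R)$ therefore contradicts nothing unless you already know $\overline{b}_1=\inv_p(\gr_p(R))$, which is the claim itself.

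Your recipe for the logarithmic maximal center of $\gr_p(R)$ is also wrong. Divisorial coordinates can be forced into the first block without their initial forms lying in the linear span of $T(R)$. Take $f=(z+w)^2$ with $E=V(z)\cup V(w)$. The linear part of $T(f)=(z+w)$ is spanned by $z+w$, which belongs to no adapted system. Hence the maximal admissible center is $(z^2,w^2)$ and $\inv_0(f)=(2_+,2_+)$, whereas your recipe produces a single free direction.

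The identity $\overline{b}_1=\inv^1_p(R)$ has an elementary proof that bypasses $\gr_p$ entirely:
\begin{enumerate}
\item Any admissible $\cJ'$ satisfies $\ord(\cA_{\cJ'})\le\ord_p(R)=a_1$, so $\inv^1(\cJ')>\overline{b}_1$ forces $\ord(\cA_{\cJ'})=a_1$.
\item Both first blocks are then nondecreasing words in $a_1<a_{1+}$.
\item Every later entry of $\inv(\cJ)$ or $\inv(\cJ')$ is some $a_i$ or $a_{i+}$ with $a_i>a_1$, hence exceeds $a_{1+}$. An exhausted sequence counts as larger, as usual.
\end{enumerate}
So the lexicographic comparison of $\inv(\cJ')$ with $\inv(\cJ)$ is decided inside the first blocks. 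Thus $\inv^1(\cJ')>\overline{b}_1$ would give $\inv(\cJ')>\inv(\cJ)$, contradicting maximality.

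\textbf{Independence of the maximal contact.} This step is unnecessary. The Reduction Property is stated for an arbitrary maximal contact $\overline{x}_1t^{1/a_1}$. Its second assertion, $\cA=\cO_X[\overline{x}_1t^{1/a_1},\cA_{|V(\overline{x}_1)}]^{\inte}$, supplies a presentation of $\cA$ whose first block is that contact. Applying it directly to the given contact proves the formula for every choice at once.

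The argument you sketch is also not correct as stated. For $f=x^2+y^5$, both $x$ and $x+y^4$ are maximal contacts in $T(f)=(x,y^4)$. Any automorphism carrying one to the other moves $V(x)$, so it is not the identity modulo $(x)$ and does not restrict to $V(x)$. Moreover, transporting the coefficient ideal along such an automorphism requires it to be compatible with $R$ up to integral closure. That is the nontrivial content of a glueing or homogenization lemma. It does not follow from discarding the terms with $|\alpha|\ge b_ja_1$.
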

\subsection{Resolution by weighted centers}

The main theorem of~\cite{ATW-weighted} establishes a canonical version of
resolution by weighted blow-ups.
For our purposes, we require its canonical \emph{logarithmic} counterpart,
which was presented in~\cite{ W22,W23}.
Recall that the approach of~\cite{ATW-weighted} does not treat the logarithmic case.

The logarithmic version was developed in full generality in~\cite{ W22,W23},
by extending and refining the formalism originally introduced in the
boundary-free setting.
A later and more elaborate treatment of the logarithmic situation was given
in~\cite{ABQTW}, following a somewhat different approach, which however is computationally more involved and not well suited for the methods and the results developed in this paper.

\begin{theorem}[Canonical weighted logarithmic resolution 
{\cite{W22,W23}}]
\label{thm:weighted-log-resolution}
(See also~\cite{ATW-weighted,Marzo-McQuillan,ABQTW,Quek}.)

Let $(X,E)$ be a smooth logarithmic pair over a field of characteristic~$0$,
and let $\mathcal{I}$ be an ideal (or Rees algebra) on $X$.
There exists a semi-continuous invariant
\[
\inv_{(X,E)}(\mathcal{I}) : X \to \Sigma
\]
with values in a well-ordered set, and an associated maximal admissible center
$\mathcal{J}$, such that:

\begin{enumerate}
\item $V(\mathcal{J})$ is precisely the locus where $\inv$ attains its maximal value;

\item if $\sigma : X_1 \to X$ is the weighted blow-up of $\mathcal{J}$,
with controlled transform $\mathcal{I}_1$ and divisor $E_1$, then
\[
\max \inv_{(X_1,E_1)}(\mathcal{I}_1)
<
\max \inv_{(X,E)}(\mathcal{I});
\]

\item the construction is functorial for smooth morphisms.
\end{enumerate}

Iterating the procedure yields a canonical sequence of weighted blow-ups
at maximal admissible centers adapted to $E$.

If $\mathcal{I}=\mathcal{I}_Z$ is the ideal of a closed (generically reduced) subscheme
(or Deligne-Mumford substack) $Z\subset X$, the strict transform of $Z$
becomes smooth and meets the exceptional divisor with SNC,
giving a canonical embedded logarithmic resolution of $Z$.

In general, the process yields principalization of $\mathcal{I}$:
its total transform becomes a locally monomial ideal supported on an SNC divisor.
\end{theorem}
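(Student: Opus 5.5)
This theorem is quoted from \cite{W22,W23}, so the plan reconstructs the argument there rather than building anything new on top of it. I would follow the inductive scheme of \cite{ATW-weighted}, adapted to the logarithmic setting.

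\textbf{Step 1: existence and uniqueness of the center.} For each $p$, I would show that a maximal admissible center exists and is unique as a Rees algebra $\mathcal{A}_{\mathcal J}$. The argument is by induction on the length of $\inv_p$, using the formula
\[
\inv_p(\mathcal R)=\bigl(\inv^1_p(\mathcal R),\inv_p(C_{\overline{x}_1t^{1/a_1}}(\mathcal R))\bigr).
\]
\begin{enumerate}
\item The first block $\overline{x}_1 t^{1/a_1}$ is taken to be a maximal contact, namely the free parameters in the cotangent ideal $T(\mathcal R)$. Lemma~\ref{gr} computes $\inv^1_p$ from $\gr_p(\mathcal R)$, and the divisorial parameters of weight $a_{1+}$ are detected as monomial factors there.
\item The reduction property of the coefficient ideal then produces the remaining blocks on $V(\overline{x}_1)$.
\item Independence of the choice of maximal contact comes from the fact that any two choices differ by an automorphism preserving $\mathcal{A}_{\mathcal J}$. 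I would show this via the derivation-closure argument: $t^{-1/a_1}\mathcal D_X$ preserves $\mathcal A_{\mathcal J}$. This argument also gives étale and smooth functoriality, which is (3).
\end{enumerate}

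\textbf{Step 2: semicontinuity and the maximal locus.} Upper semicontinuity follows inductively. Order is upper semicontinuous, maximal contacts extend to a neighborhood, and coefficient ideals are compatible with restriction to nearby points. Then the centers at the points of maximal invariant glue, by uniqueness, to a global $\mathcal{J}$ with $V(\mathcal J)=$ the maximal locus, which is (1). Well-orderedness of $\Sigma$ follows because the weights have bounded denominators: they are controlled by $\ord$ and inductively by the degrees of the generators.

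\textbf{Step 3: strict drop of the invariant.} This is the main obstacle, claim (2). I would work on the cobordant blow-up $B=\Spec \mathcal O_X[t^{-1},x_it^{w_i}]$, where everything happens in one smooth chart. The controlled transform is $\mathcal O_B\cdot\mathcal R t^{w}$, with new coordinates $x_i'=x_it^{w_i}$.

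At points of $B_+$ some $x_i'$ is a unit, so I would localize at $x_i'\neq0$. The key computation is that the degree-one piece of the transformed Rees algebra contains the weighted-homogeneous initial form $\init_{\mathcal J}(f)(x'_1,\dots,x'_k)$ of admissible generators. By maximality of $\mathcal J$, this initial part is not contained in any strictly larger center; otherwise one could enlarge the invariant. The argument then goes inductively through the blocks:
\begin{itemize}
\item either $\ord$ drops;
\item or the first block becomes divisorial, because $t^{-1}$ is new and exceptional, so $a_1$ is replaced by $a_{1+}$ only on non-free coordinates and the log invariant compares favorably;
\item or one passes to the coefficient ideal, which is again a controlled transform under the induced center.
\end{itemize}
I expect the delicate point to be comparing the log symbols $a_{j+}$ with the new exceptional divisor $V(t^{-1})$. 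I would handle it by checking that the transformed maximal contact can never contain $t^{-1}$-related parameters with a larger weight. Descending from $[B_+/G_m]$ gives the statement on $X_1$.

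\textbf{Step 4: iteration and termination.} Iteration terminates because $\Sigma$ is well ordered. At the end $\inv$ is minimal, which forces $\mathcal R$ to be monomial in the divisorial coordinates. That gives principalization, and for $\mathcal I=\mathcal I_Z$ it gives the smooth strict transform transversal to the exceptional divisor $E$.
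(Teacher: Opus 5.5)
The paper gives no proof of this theorem. It is imported from \cite{W22,W23} (building on \cite{ATW-weighted}), so your outline can only be measured against the architecture of those papers. Steps 1 and 2 are in line with it. Step 3, which you rightly call the crux, does not establish the strict inequality in (2).

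The sentence ``by maximality of $\mathcal J$, this initial part is not contained in any strictly larger center'' concerns centers at $p$, i.e.\ along the vertex $V(x_1t^{w_1},\dots,x_kt^{w_k})$, which is exactly what $B_+$ removes. A center at a point $p'\in B_+$ over $p$ does not induce a center at $p$, so maximality at $p$ gives no direct control of $\inv_{p'}$. In any case, a containment argument of this type yields at best $\le$, never $<$.

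Your middle case is also backwards. Since $a_1<a_{1+}$, if entries of the first block at $p'$ become divisorial (for instance because the exceptional coordinate $t^{-1}$ enters it), the lexicographic invariant goes \emph{up}. This case has to be excluded, not counted as a drop. Your closing remark, that the transformed maximal contact must never involve $t^{-1}$ with a larger weight, is precisely the claim left unproved.

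What is missing is a mechanism forcing strictness. One that works cleanly on $B$ is degeneration to the weighted normal bundle.
\begin{enumerate}
\item The divisor $V(t^{-1})\subset B$ is $N_{\mathcal J}$, and the controlled transform restricts there to $\init_{\mathcal J}(\mathcal I)$.
\item Restricting to the divisorial hypersurface $V(t^{-1})$ cannot lower $\inv$. An adapted center at $p'$ either does not involve $t^{-1}$, and then restricts to a center with the same invariant. Or it has $t^{-1}$ as a coordinate, and then restricts to one with that entry deleted, which is lexicographically larger when truncated sequences count as larger.
\item By the lemma in the paper's subsection on the weighted normal bundle, $\mathcal J$ stays maximal admissible for $\init_{\mathcal J}(\mathcal I)$ along the zero section.
\item This ideal is $G_m$-homogeneous, and $\inv$ is $G_m$-invariant by functoriality. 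Every orbit in $V(t^{-1})\cap B_+=N_{\mathcal J}\setminus 0$ has a zero-section point in its closure.
\item Upper semicontinuity then gives $\inv\le\inv(\mathcal J)$ on $N_{\mathcal J}\setminus 0$. Equality at one point would put a whole orbit accumulating on the zero section inside the maximal locus. That contradicts (1) for $\init_{\mathcal J}(\mathcal I)$ on $N_{\mathcal J}$, which says the maximal locus is the zero section.
\item Points of $B_+\setminus V(t^{-1})\cong (X\setminus V(\mathcal J))\times G_m$ are handled by (1) on $X$. Smooth functoriality then descends the drop to $[B_+/G_m]$.
\end{enumerate}

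Step 4 also needs a correction. The process ends with \emph{trivial} controlled transform (not a monomial $\mathcal R$), so the total transform is $\mathcal I_D$. The embedded resolution of $Z$ is read off at an earlier stage, where the strict transform of $Z$ is itself the center; this is where generic reducedness is used.
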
\subsection{Invariant associated with NC singularities}
\begin{lemma} \label{invar} Let $\cI$ be  a Normal Crossing of codimension $r+1$ at $p\in X$ so that in an \'etale neighborhood $\cI$ admits a presentation
\[
\cI = (u_1, \ldots, u_r, m),
\qquad 
m = u_{r+1}^{a_{r+1}} \cdots u_k^{a_k},
\]
where $u_1, \ldots, u_k$ form a partial system of local parameters and 
$a_{r+1} + \cdots + a_k = d$, where $u_1,\ldots,u_{r+t}$ are free and  $u_{r+t+1},\ldots,u_{r}$, are divisorial. 
Then the maximal admissile center at $p\in X$ is given by $$\cJ=(u_1,\ldots,u_{r},u^d_{r+1},\ldots,u^d_{r+t},u^d_{r+t+1},\ldots,u^d_{k}),$$ and the invariant is given by $$\inv_p(\cI)=\inv(\cJ)=(\underbrace{1, \ldots, 1}_{r \text{ times}}, \underbrace{d, \ldots, d}_{t \text{ times}},\underbrace{d_+, \ldots, d_+}_{k-r-t \text{ times}})$$
\end{lemma}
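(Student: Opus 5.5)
The plan is to compute the invariant through the inductive formula
\[
\inv_p(\cR)=\bigl(\inv^1_p(\cR),\ \inv_p(C_{\overline{x}_1t^{1/a_1}}(\cR))\bigr),
\]
applied twice. The first application peels off the smooth part $u_1,\ldots,u_r$; the second treats the monomial $m$ on $V(u_1,\ldots,u_r)$.

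Both the centers and the invariant are compatible with \'etale base change, since they are functorial for smooth morphisms. So I may work directly with the presentation $\cI=(u_1,\ldots,u_r,m)$. I read the statement's indexing as follows: $u_1,\ldots,u_r$ are free, $u_{r+1},\ldots,u_{r+t}$ are the free coordinates occurring in $m$, and $u_{r+t+1},\ldots,u_k$ are the divisorial ones.

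\textbf{Admissibility.} First I check that $\cJ$ is $\cI$-admissible. We have $u_it\in\cA_\cJ$ for $i\le r$ because $u_i$ has weight $1$. For the monomial, $mt=\prod_{j>r}(u_jt^{1/d})^{a_j}$ lies in $\cA_\cJ$ since $\sum_{j>r} a_j/d=1$. This gives the lower bound $\inv_p(\cI)\ge\inv(\cJ)$.

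\textbf{Upper bound, first block.} Assume $r\ge1$. Then $\ord_p(\cI)=1$, and the cotangent ideal $T(\cI)=\cD^0_X(\cI)=\cI$ contains exactly the $r$ independent parameters $u_1,\ldots,u_r$ modulo $m_p^2$. Indeed $m\in m_p^{d}$, and if $d=1$ the statement degenerates to a smooth ideal and is handled the same way. By Lemma~\ref{gr}, $\inv^1_p(\cI)=\inv_p(\gr_p\cI)$. The degree-one part of $\gr_p(\cI)$ is spanned by the images of $u_1,\ldots,u_r$, all free. Hence $\inv^1_p(\cI)=(1,\ldots,1)$ with $r$ entries, and $\overline{x}_1t=(u_1t,\ldots,u_rt)$ is a maximal contact.

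\textbf{Coefficient ideal and second block.} Since $m$ does not involve $u_1,\ldots,u_r$, the coefficient ideal $C_{\overline{x}_1t}(\cI t)$ on $V(u_1,\ldots,u_r)$ is generated by $m\,t$. Its order is $d$, so every further entry of any admissible center is at least $d$ in the lexicographic comparison, and the first of them equals $d$.

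For the free coordinates, I apply the differential operator $\partial_{u_{r+1}}^{a_{r+1}}\cdots\partial_{u_j}^{a_j-1}\cdots$ of total order $d-1$ to $m$. This shows that $\cD^{d-1}(m)$ contains each free $u_j$ with $r<j\le r+t$, up to a unit. Therefore these $t$ free coordinates form a maximal contact of weight $d$. Conversely, no other free parameter can appear with weight $d$: the degree-$d$ part of $\gr$ is the single monomial $m$, and its cotangent span is exactly $(u_{r+1},\ldots,u_{r+t})$ together with divisorial coordinates.

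\textbf{Divisorial coordinates.} After restricting further to $V(u_{r+1},\ldots,u_{r+t})$, the remaining coefficient Rees algebra is generated by the monomial $\prod_{j>r+t}u_j^{a_j}\,t^{b}$ in divisorial coordinates. In the logarithmic setting, divisorial coordinates cannot be used as maximal contact beyond their order. The log invariant of such a monomial algebra is $d_+$ in each divisorial coordinate that appears, because the remaining weighted degree must still equal $1$ with uniform weight $1/d$.

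\textbf{Main obstacle.} I expect the hardest step to be this final divisorial block. I must show that no admissible adapted center achieves a lexicographically larger tail, for example by assigning some divisorial $u_j$ a weight larger than $d$, or by using fewer or more coordinates. The approach I would try first is a weighted-degree count on the monomial $m$. Admissibility forces $\sum_j a_j/a'_j\ge1$ over the coordinates of any admissible center. Since divisorial coordinates are fixed up to units, they cannot be replaced by combinations. Enlarging any weight beyond $d$ would then force some other coordinate below $d$, contradicting the order computed in the coefficient-ideal step. Uniqueness of the maximal center then identifies it with $\cJ$.
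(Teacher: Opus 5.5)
Your proposal is correct and follows the paper's route: the paper also runs the maximal-contact/coefficient-ideal algorithm, first with maximal contact $(u_1,\ldots,u_r)t$ at order $1$, then on the coefficient algebra generated by $mt$, which has order $d$. The one difference is that the paper takes the whole tuple $(u_{r+1},\ldots,u_k)$, divisorial coordinates included, as a single weight-$d$ maximal-contact block inside $\mathcal{D}^{d-1}(m)$; the next coefficient ideal then vanishes, so the divisorial step you flag as the main obstacle never arises, and your separate treatment of it, while sound, is unnecessary.
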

\begin{proof} We follow the algorithm from \cite[Section 3.3]{ W22}. We compute the invariant in the given \'etale neighborhood. Consider the algebra $R_1=\cO_X[\cI t]$.
Its order is equal $$\ord_p(R_1)=\ord_p(\cI)=1,$$ with the tangent ideal $$T(\cI)=\cD^{1-1}(\cI)=\cD^0(\cI)=\cI,$$ and a graded multiple maximal contact $\overline{u}_1t$ where $$\overline{u}_1:=(u_1,\ldots,u_{r}).$$
Then $$R_2:=C_{\overline{u}_1t}(R_1)=(mt)$$ is generated by the only coefficient $m$ with respect to presenataion in $\overline{u}_1$. Thus $$\ord_p(R_2)=\ord_p(mt)=\ord_p(m)=d,$$ In this case multiple maximal contact is given by 
$\overline{u}_2t^{1/d}$, where $$\overline{u}_2:=(u_{r+1},\ldots,u_k)\subset T(m)=\cD^{d-1}(m).$$ Note that
$R_3=C_{\overline{u}_2t^{1/d}}(R_2)=0$. This determines the Rees algebra of the center $$\cA_{\cJ}=\cO_X[\overline{u}_1t,\overline{u}_2t^{1/d}],$$ associated with $\QQ$-ideal center $$\cJ=(\overline{u}_1,\overline{u}_2^d)=(u_1,\ldots,u_r,u^d_{r+1},\ldots,u^d_k)$$ and the invariant $$\inv_p(\cI)=\inv(\cJ)=(\underbrace{1, \ldots, 1}_{r \text{ times}},, \underbrace{d, \ldots, d}_{t \text{ times}},\underbrace{d_+, \ldots, d_+}_{k-r-t \text{ times}}).$$

\end{proof}

\subsection{ Admissible centers and  NC singularities}
\subsubsection{Split neighborhood}
\begin{lemma}[Standard Splitting Lemma]\label{split}
Let $Y \subset X$ be a smooth closed subvariety of a smooth variety $X$.
Then for any point $p \in Y$, there exists an \'etale neighborhood $U$ of $p$ and a smooth morphism
\[
  \pi: U \to Y \cap U
\]
such that $\pi$ restricts to the identity on $Y \cap U$. In other words, $U$ admits a smooth retraction onto $Y \cap U$.
\end{lemma}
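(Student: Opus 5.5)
Since $Y$ is smooth at $p$, we work in \'etale coordinates: there is an \'etale neighborhood $U \to X$ of $p$ and a regular system of parameters $x_1,\ldots,x_n$ at $p$ such that $Y \cap U = V(x_1,\ldots,x_c)$, where $c = \operatorname{codim}_X Y$. We may take $U$ to be an open subset of $X$ on which a Zariski-local regular system of parameters exists. Such a system gives an \'etale morphism
\[
\phi=(x_1,\ldots,x_n)\colon U\to \AA^n ,
\]
and $Y\cap U$ is the preimage of the coordinate subspace $\AA^{n-c}=V(x_1,\ldots,x_c)$.

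The natural candidate for the retraction is the map $(x_{c+1},\ldots,x_n)\colon U\to \AA^{n-c}$. This map does not land in $Y\cap U$, so we correct it by a fibre product. Set
\[
U' := U\times_{\AA^{n-c}} (Y\cap U),
\]
where $U \to \AA^{n-c}$ is the projection $(x_{c+1},\ldots,x_n)$ and $Y\cap U\to\AA^{n-c}$ is the restriction of $\phi$. The restriction $Y \cap U \to \AA^{n-c}$ is \'etale, because $Y\cap U = \phi^{-1}(\AA^{n-c})$ and \'etaleness is stable under base change.

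The key steps are as follows.

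(1) The first projection $U'\to U$ is \'etale, being a base change of the \'etale map $Y\cap U\to \AA^{n-c}$. The diagonal-type map $Y\cap U\to U'$, $y\mapsto (y,y)$, is a section over $Y \cap U$. Taking $p'=(p,p)$, the pair $(U',p')$ is therefore an \'etale neighborhood of $p$ in $X$.

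(2) The second projection $\pi\colon U'\to Y\cap U$ is the base change of $U\to \AA^{n-c}$. This map is smooth: it factors as the \'etale map $\phi$ followed by the smooth linear projection $\AA^n\to\AA^{n-c}$. Hence $\pi$ is smooth of relative dimension $c$.

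(3) We identify the preimage of $Y$ in $U'$. It is $(Y\cap U)\times_{\AA^{n-c}}(Y\cap U)$, and since $Y\cap U\to\AA^{n-c}$ is \'etale, the diagonal is open and closed in this product. After shrinking $U'$ to a Zariski neighborhood of $p'$ that avoids the other components, the preimage of $Y$ becomes exactly the diagonal copy of $Y\cap U$. On this copy $\pi$ is the identity. Renaming $U'$ as $U$ then gives the lemma.

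The only delicate point is step (3). We must ensure that shrinking $U'$ removes the off-diagonal components of $(Y\cap U)\times_{\AA^{n-c}}(Y\cap U)$ while keeping the diagonal copy of $Y$ intact near $p'$. The tool here is that an unramified separated morphism has an open and closed diagonal. The off-diagonal part is closed in $U'$ and does not contain $p'$, so we remove it. Alternatively, we can pass to the henselization, or work formally, where the off-diagonal part simply disappears. Finally, we keep a regular system of parameters adapted to $E$: the coordinates $x_{c+1},\ldots,x_n$ pull back along $\pi$ and, together with $x_1,\ldots,x_c$, still form one, which is what later arguments need.
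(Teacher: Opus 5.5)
Your proof is correct and is essentially the paper's argument. Both form the fibre product of the projection onto the tangential coordinates with the \'etale map $Y\cap U\to\AA^{n-c}$ (the paper's $U_0\times_{\AA^k}Y_0$), so that the first projection is \'etale, the second is a smooth retraction, and the diagonal copy of $Y$ is open and closed in the preimage of $Y$. Your explicit removal of the closed off-diagonal part of $(Y\cap U)\times_{\AA^{n-c}}(Y\cap U)$ is in fact a bit more careful than the paper's choice of a component containing the diagonal, which by itself need not make the preimage of $Y$ equal to the diagonal.
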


\begin{proof}
Choose a system of local parameters (coordinates) $x_1, \dots, x_{n}$ on $X$ near $p$,  such that $Y=V(x_{k+1},\ldots, x_n)$. 
This defines a smooth morphism
\[
  \phi: U_0 \longrightarrow \AA^k, \quad \phi(q) = (x_1(q), \dots, x_k(q)).
\]
whose restriction
 $$\phi_{|Y_0}: Y_0 := Y \cap U_0\to \AA^k$$ is \'etale.  
Define $U$ to be a component of the fiber product $U_0 \times_{\AA^k} Y_0$ which is \'etale over $U_0$ containing the component $Y_0\hookrightarrow Y_0\times_{\AA^k} Y_0$.  
Then $U$ is an \'etale neighborhood of $p\in U_0$ admitting the desired smooth retraction  $U\to Y_0=Y\cap U\hookrightarrow U$.
\end{proof}

\begin{definition}[Split Neighborhood]
Let $Y \subset X$ be a smooth closed subvariety.  
An \'etale neighborhood $U$ of a point $p \in X$ is called a \emph{$Y$-split neighborhood} if there exists a smooth morphism
\[
  \pi: U \longrightarrow Y \cap U
\]
that restricts to the identity on $Y \cap U$.  

If $Y = V(\cJ)$ is defined by a center $\cJ$, we also refer to such a neighborhood as a \emph{$\cJ$-split neighborhood}.
\end{definition}

 \subsubsection{Normal Crossings and the Maximal Admissible Center}


\begin{lemma}\label{etale2}
Let $\cJ$ be a maximal admissible center for an ideal $\cI$ on a smooth
affine variety $X$, and let $p\in V(\cJ)$.

Assume that $\cI$ has normal crossings of codimension $r+1$ and order $d$
in a neighborhood of $p$. Then, after choosing suitable local parameters
$y_1,\ldots,y_k$ at $p$, we may write
\[
\cJ = (y_1,\ldots,y_r,y_{r+1}^d,\ldots,y_{r+t}^d),
\]
with $(y_1,\ldots,y_r)\subset \cI$.

Let $Z:=V(y_1,\ldots,y_r)$. Then $\cI_{|Z}$ has normal crossings of
codimension $1$ at $p$, and
\[
\cJ_{|Z}=(y_{r+1|Z}^d,\ldots,y_{r+t|Z}^d)
\]
is its maximal admissible center. Moreover,
\[
\inv_p(\cI)
=
\inv(\cJ)
=
(\underbrace{1,\ldots,1}_{r\text{ times}},\inv_p(\cI_{|Z})).
\]

Conversely, if $\cI\supset (y_1,\ldots,y_r)=\cI_Z$ and
$\cI_{|Z}$ has normal crossings of codimension $1$ at $p$,
then $\cI$ has normal crossings of codimension $r+1$ at $p$.
\end{lemma}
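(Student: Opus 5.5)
The plan is to reduce everything to Lemma~\ref{invar} together with uniqueness of the maximal admissible center and the inductive structure of the invariant via coefficient ideals. Since $\cI$ has normal crossings of codimension $r+1$ and order $d$ at $p$, Lemma~\ref{invar} gives an étale presentation $\cI=(u_1,\ldots,u_r,m)$ with $m=u_{r+1}^{a_{r+1}}\cdots u_k^{a_k}$, $\sum a_i=d$. It also gives the maximal admissible center $(u_1,\ldots,u_r,u_{r+1}^d,\ldots,u_k^d)$. By uniqueness of the canonical center, this $\QQ$-ideal coincides with $\cJ$ étale-locally at $p$. Hence its first block is intrinsic: in degree one, $\overline{u}_1 t$ is the maximal contact coming from the cotangent ideal $T(\cI)=\cI$ (the order is $1$). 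The ideal $(u_1,\ldots,u_r)$ is therefore the degree-$1$ part of the Rees center intersected with the span of the linear-in-$t$ coordinates. Concretely, the $a_1=1$ block of $\cJ$ equals $\cI+\mathfrak m_p^2$ modulo $\mathfrak m_p^2$ lifted inside $\cI$. I will write $\cJ=(y_1,\ldots,y_r,y_{r+1}^d,\ldots,y_{r+t}^d)$ with $y_1,\ldots,y_r$ chosen as elements of $\cI$ whose differentials span $(\cI+\mathfrak m_p^2)/\mathfrak m_p^2$. These define the same smooth subvariety as $V(u_1,\ldots,u_r)$ in a neighborhood, by Nakayama and the fact that $\cI\cap$(order-one elements) generates the maximal contact. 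Here $t$ is relabeled as the number of remaining parameters $k-r$.

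Next, set $Z=V(y_1,\ldots,y_r)$. I will choose the étale coordinates so that $u_i=y_i$ for $i\le r$. This is possible by replacing $u_1,\ldots,u_r$ with the $y_i$, which generate the same ideal $(u_1,\ldots,u_r)$ modulo $m$-terms. The ideal $(y_1,\ldots,y_r)$ is contained in $\cI$ and has the right span. Then $(y_1,\ldots,y_r,m)=\cI$, and I argue $(y_1,\ldots,y_r)=(u_1,\ldots,u_r)$ after modifying the $u_i$ by multiples of $m$. Then $\cI_{|Z}=(m_{|Z})$ is a monomial in the restricted parameters $u_{r+1|Z},\ldots,u_{k|Z}$, which form a system adapted to $E_{|Z}$. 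So $\cI_{|Z}$ has normal crossings of codimension $1$. Applying Lemma~\ref{invar} on $Z$ (with $r=0$) shows that the maximal admissible center is $(u_{r+1|Z}^d,\ldots,u_{k|Z}^d)$, which equals $\cJ_{|Z}$. The formula $\inv_p(\cI)=(1,\ldots,1,\inv_p(\cI_{|Z}))$ follows from comparing the two invariants computed in Lemma~\ref{invar}. Equivalently, it is the logarithmic inductive formula $\inv_p(\cR)=(\inv^1_p,\inv_p(C_{\overline{y}_1t}(R)))$, since the coefficient ideal with respect to $\overline{y}_1t$ is exactly $\cI_{|Z}t$.

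For the converse, suppose $\cI\supset\cI_Z=(y_1,\ldots,y_r)$ and $\cI_{|Z}$ is étale-locally $(\bar m)$ with $\bar m$ a monomial in adapted parameters $\bar u_{r+1},\ldots,\bar u_k$ on $Z$. By Lemma~\ref{split} I pass to a $Z$-split étale neighborhood with retraction $\pi:U\to Z$ and lift $\bar u_j$ to $u_j=\pi^*\bar u_j$, so that $y_1,\ldots,y_r,u_{r+1},\ldots$ is a regular system. The divisorial ones can be taken to be the equations of the components of $E$, since $E$ is transverse to $Z$ because the $y_i$ are free. Then $\cI=\cI_Z+\pi^*(\bar m)\cdot\cO_U=(y_1,\ldots,y_r,m)$, because $\cI/\cI_Z=\cI_{|Z}$. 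This is an NC presentation of codimension $r+1$.

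I expect the main obstacle to be the first step. There one must show that the free linear block of the canonical center can be realized by elements of $\cI$ itself (not just of $\cJ_X$), and that the remaining coordinates restrict compatibly with $E$. Uniqueness of $\cJ$ only determines it as a $\QQ$-ideal. I will handle this through the degree-$1$ component $(\cA_\cJ)_1\cap\cI$ and the cotangent ideal $T(\cI)=\cI$, which are intrinsic. A secondary care point is the adaptedness of lifted divisorial coordinates in the converse.
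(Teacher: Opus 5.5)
Your proposal is correct and follows essentially the same route as the paper. Both arguments take the étale NC presentation, realize the weight-one block of $\cJ$ by parameters $y_1,\ldots,y_r\in\cI$, read off $\inv$ via the maximal contact/coefficient-ideal formula, and prove the converse by pulling the monomial back along a $Z$-split neighborhood base-changed to the étale chart on $Z$. Your choice of the $y_i$ via the image of $\cI$ in $\mathfrak m_p/\mathfrak m_p^2$, followed by re-choosing the presentation with $u_i=y_i$, is a more careful version of the paper's terse ``after multiplying by a unit'' step.

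Two small corrections. Drop the aside that $V(y_1,\ldots,y_r)=V(u_1,\ldots,u_r)$ for the original $u_i$: it is false (e.g.\ $y_1=u_1+u_2u_3$ for $\cI=(u_1,u_2u_3)$), and it is your subsequent re-choice of coordinates that makes the argument work. Also, for $d=1$ the image of $\cI$ in $\mathfrak m_p/\mathfrak m_p^2$ is $(r+1)$-dimensional, so in that case take only $r$ free directions.
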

\begin{proof}If $\cI$ has normal crossings of codimension $r+1$ at $p\in X$,
then there exists an \'etale neighborhood $U\to X$ of $p$ with a presentation
\[
\cI = (y_1,\ldots,y_r,m),
\qquad
m = y_{r+1}^{a_{r+1}}\cdots y_k^{a_k},
\]
where $y_1,\ldots,y_k$ form a partial system of regular parameters
adapted to $E$, and $a_{r+1}+\cdots+a_k=d$.
After multiplying by a unit, we may assume that
$y_1,\ldots,y_r \in \cO_{X,p}$.

Let $Z:=V(y_1,\ldots,y_r)$. Then $\cI_{|Z}$ is NC of codimension $1$
at $p$. Moreover, $(y_1,\ldots,y_r)\subset \cI$ defines a maximal contact,
and
\[
\inv_p(\cI)
=
(\inv^1_p(\cI),\inv_p(\cI_{|Z}))
=
(\underbrace{1,\ldots,1}_{r\text{ times}},\inv_p(\cI_{|Z})).
\]

Conversely, suppose $\cI_Z=(y_1,\ldots,y_r)\subset \cI$ and
$\cI_{|Z}$ is NC of codimension $1$ at $p$.
Passing to an \'etale neighborhood, we may assume that
$Z\subset X$ splits, i.e.\ there exists a smooth projection
$X\to Z$.
Since $\cI_{|Z}$ is NC, there exists an \'etale neighborhood
$U_Z\to Z$ such that $\cO_{U_Z}\cdot\cI_{|Z}$ is generated by a monomial.
Then, in the induced \'etale neighborhood
$X_U := X\times_Z U_Z \to X$,
the ideal $\cO_{X_U}\cdot\cI$ is SNC.\end{proof}



\section{The Galois group of a splitting form}
\subsection{Splitting forms}
\begin{definition}
Let $K$ be a field.
A homogeneous polynomial (or \emph{form})
\[
F = \sum_{\alpha} c_\alpha x^\alpha \;\in\; K[x_1,\ldots,x_k]
\]
of degree $d$ is said to \emph{split} if there exists a field extension $L/K$ such that
\[
   F \;=\; \prod_{i=1}^{r} \ell_i(x_1,\ldots,x_k)^{a_i},
\]
where each $\ell_i(x)$ is a distinct linear form of the shape
\[
   \ell_i(x) = \sum_{j=1}^{k} \alpha_{ij} x_j,
   \qquad \alpha_{ij} \in L,
\]
and $a_i \geq 1$.  
In this situation, we call $F$ a \emph{splitting form}.

We say that $F$ is \emph{monic} if the coefficient of $x_1^d$ in $F$ equals $1$.  
A splitting is called \emph{monic} if in each $\ell_i(x)$ the coefficient $\alpha_{i1}$ of $x_1$ is equal to $1$.
\end{definition}
Recall
\begin{lemma} \label{1} Let $K$ be an infinite field, and 
\[
F = \sum_{\alpha} c_\alpha x^\alpha \;\in\; K[x_1,\ldots,x_k]
\]
be a spitting form of degree $d$. Then by a linear coordinate change $$x'_1=x_1, \quad,x_i'=x_i-\lambda_i x_1,$$ for $i\geq 2, \lambda_i\in K$ and rescaling $F$ one can assume that $F$ is monic. Moreover if $\ch(K)=0$ we can assume that $\lambda_i\in \QQ$.

\end{lemma}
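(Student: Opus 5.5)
The plan is to reduce monicity to the nonvanishing of a single polynomial at a point of $K^{k-1}$, and then use that $K$ is infinite.

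First I work out how $F$ transforms under the substitution. Since $x_i' = x_i - \lambda_i x_1$, we have $x_i = x_i' + \lambda_i x_1'$ for $i\ge 2$ and $x_1 = x_1'$. Rewrite $F$ in the new coordinates as $F'(x_1',\ldots,x_k') := F(x_1', x_2'+\lambda_2 x_1',\ldots,x_k'+\lambda_k x_1')$, which is again a form of degree $d$ over $K$. Setting $x_2'=\cdots=x_k'=0$ shows that the coefficient of $(x_1')^d$ in $F'$ equals
\[
F(1,\lambda_2,\ldots,\lambda_k).
\]
So it suffices to find $\lambda=(\lambda_2,\ldots,\lambda_k)\in K^{k-1}$ with $c:=F(1,\lambda)\neq 0$. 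After that, replacing $F'$ by $c^{-1}F'$ (the rescaling) makes it monic.

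The splitting property survives this process. Each $\ell_i$ transforms to a linear form $\ell_i'$ over $L$, distinct forms remain distinct because the coordinate change is invertible, and $c^{-1}$ can be absorbed into one factor. In fact the splitting can be made monic as well. The coefficient of $x_1'$ in $\ell_i'$ is $\ell_i(1,\lambda)$, and $\prod_i \ell_i(1,\lambda)^{a_i} = c \neq 0$, so each $\ell_i(1,\lambda)\neq 0$. We may therefore normalize each factor by dividing by $\ell_i(1,\lambda)$, with the product of the resulting constants equal to $c$.

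The main step is showing that $\lambda$ exists. Here $g(\lambda):=F(1,\lambda_2,\ldots,\lambda_k)$ is the dehomogenization of $F$, and it is a nonzero polynomial whenever $F\neq 0$: every monomial $x^\alpha$ of degree $d$ goes to $\lambda^{(\alpha_2,\ldots,\alpha_k)}$, and these are distinct monomials because $\alpha_1 = d-\sum_{i\ge 2}\alpha_i$ is determined by the rest. A nonzero polynomial over an infinite field has a non-root in $K^{k-1}$. This follows by the standard induction on the number of variables, using that a nonzero one-variable polynomial has finitely many roots.

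When $\ch(K)=0$, the subfield $\QQ\subset K$ is already infinite and $g$ is a nonzero polynomial in $K[\lambda]$. The same induction, applied with values restricted to the infinite set $\QQ$, gives a non-root in $\QQ^{k-1}$: a nonzero polynomial in $K[t]$ has finitely many roots in $K$, so it misses some rational value. Hence we can take $\lambda_i\in\QQ$. The only point needing care is this last remark, that the non-root can be chosen in an infinite subset of $K$ rather than in all of $K$. I regard it as routine.
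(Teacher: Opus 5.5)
Your proof is correct and follows the paper's argument: choose $\lambda$ with $F(1,\lambda_2,\ldots,\lambda_k)\neq 0$, which is possible because the dehomogenization of $F$ is a nonzero polynomial and $K$ (or $\QQ\subset K$) is infinite, then rescale by the inverse of that value. You also fill in details the paper leaves out: you identify the leading coefficient explicitly as $F(1,\lambda)$ (the paper writes the evaluation point in a different order), you check that the splitting is preserved, and you justify taking $\lambda_i\in\QQ$.
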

\begin{proof}We use the standard argument from the proof of the Noether
Normalization Theorem (see \cite[Chapter~5, Ex.~16]{AM}).
Consider a general linear coordinate change
\[
x_i' = x_i - \lambda_i x_1 \quad (i=2,\ldots,k),
\]
such that
\[
\alpha := F(\lambda_1,\ldots,\lambda_{k-1},1) \neq 0.
\]
Replacing $F$ by $\alpha^{-1}F$, we may assume that $F$ becomes monic
in $x_1$.\end{proof}
\subsection{Galois extensions of  splitting forms}
\begin{proposition} \label{unique}
With the above notation, suppose $F$ is a  splitting form over an infinite field.   Then by Lemma \ref{1} we can modify it and assume that it is monic.
Then one can associate to monic $F$ a normal field extension
\[
   K_F := K(\alpha_{ij}) \subseteq L,
\]
generated by the coefficients $\alpha_{ij}$ of the linear factors $\ell_i$ appearing in a monic splitting of $F$.  
This field $K_F$ is well-defined up to isomorphism, and is independent of the choice of homogeneous coordinates on $\mathbb{P}^{k-1}$.

The natural action of the Galois group $G_F = \Gal(K_F/K)$ on $K_F$ extends to an action on the polynomial ring
\[
   K_F[x_1,\ldots,x_k],
\]
where $G_F$ acts trivially on the variables $x_j$.  
Under this action, $G_F$ permutes the linear forms $\ell_i$ occurring in the factorization of $F$ over $L$.
\end{proposition}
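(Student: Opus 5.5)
The plan has four steps: show the monic splitting is unique, deduce that $K_F$ is normal over $K$, show independence of coordinates, and finally describe the Galois action on the factors.

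\textbf{Uniqueness of the monic splitting.} Let $F$ be monic in $x_1$ and write a splitting $F=\prod_i \ell_i^{a_i}$ over $L$. The coefficient of $x_1^d$ in this product is $\prod_i \alpha_{i1}^{a_i}=1$, so every $\alpha_{i1}\neq 0$. Rescaling each $\ell_i$ by $\alpha_{i1}^{-1}$ then gives a monic splitting; the scalars cancel because their product is $1$. Next, $L[x_1,\ldots,x_k]$ is a UFD, and linear forms are irreducible there. Hence the multiset of monic factors $\{\ell_i\}$ with multiplicities $a_i$ is uniquely determined by $F$ inside any field containing the coefficients. More canonically, choose an algebraic closure $\overline K$ of $K$. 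The factorization of $F$ in $\overline K[x]$ is unique, so the monic linear factors and their coefficients $\alpha_{ij}$ are well-defined elements of $\overline K$. They are algebraic over $K$, since $F$ already splits over the finitely generated extension $L$ and one can replace $L$ by the algebraic closure of $K$ in $L$. Thus $K_F:=K(\alpha_{ij})\subset\overline K$ is well-defined, and any two choices of $L$ give $K$-isomorphic fields.

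\textbf{Normality.} Take $\sigma\in\mathrm{Aut}(\overline K/K)$. Since $F$ has coefficients in $K$, we get $F=\sigma(F)=\prod_i \sigma(\ell_i)^{a_i}$, where $\sigma$ acts on coefficients only. Each $\sigma(\ell_i)$ is again monic in $x_1$, so by uniqueness $\sigma$ permutes the $\ell_i$, preserving the multiplicities $a_i$. Consequently $\sigma$ permutes the finite set $\{\alpha_{ij}\}$ and maps $K_F$ into itself. A finite extension stable under all $K$-automorphisms of $\overline K$ is normal. In characteristic $0$ it is also separable, hence Galois; in general at least the normal closure statement holds.

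\textbf{Independence of coordinates.} Let $x'=Ax$ with $A\in GL_k(K)$, and assume $F$ becomes monic in $x'_1$ after rescaling by a constant $c\in K^*$. The factors of $F$ in the new coordinates are the old $\ell_i$ expressed in $x'$, namely $\ell_i\circ A^{-1}$. Each such factor is normalized by dividing by its $x'_1$-coefficient $\beta_i$, which is a $K_F$-linear combination of the $\alpha_{ij}$. So the new monic coefficients are $K$-rational expressions in the $\alpha_{ij}$, which places them in $K_F$. The symmetric argument in the other direction gives equality of the two fields inside $\overline K$.

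I expect this step to be the main subtlety. The issue is that $K_F$ is a priori defined by a normalization tied to a choice of coordinates, and one must check that the ratios $\alpha_{ij}/\alpha_{i1}$, which are the intrinsic data of the points $[\ell_i]\in\PP^{k-1}(\overline K)$, generate the same field. The clean formulation I would use is that $K_F$ is the field generated by the fields of definition of the finitely many points $[\ell_i]$ in the dual projective space. This field is manifestly coordinate-independent. Monicity in $x_1$ merely selects the affine chart $\alpha_{i1}=1$, in which the coordinates of these points are exactly the $\alpha_{ij}$.

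\textbf{The Galois action.} Let $G_F$ act on $K_F[x_1,\ldots,x_k]$ through the coefficients, leaving the variables fixed. This is a ring automorphism fixing $K[x]$ and in particular fixing $F$. By the normality argument above, each $\sigma\in G_F$ maps every $\ell_i$ to some $\ell_{\pi(i)}$ with $a_{\pi(i)}=a_i$. This is exactly the statement that $G_F$ permutes the linear factors.
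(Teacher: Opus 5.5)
Your proposal is correct. On coordinate independence and on the permutation action it follows the paper: change coordinates by $A\in GL_k(K)$, renormalize each factor by its $x_1'$-coefficient, conclude by symmetry, and then use uniqueness of factorization, with the monic normalization removing the unit ambiguity.

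You genuinely differ on \emph{normality}. The paper specializes $F$ to the univariate polynomials
\[
\Phi_j(x_1)=F(x_1;\,x_j=-1,\,x_{j'}=0 \text{ for } j'\neq 1,j)=\prod_i (x_1-\alpha_{ij})^{a_i}\in K[x_1],
\]
and observes that $K_F$ is the splitting field of the family $\{\Phi_j\}$. You instead fix $\overline K$ and use uniqueness of the monic factorization to show that every $\sigma\in\mathrm{Aut}(\overline K/K)$ permutes the $\ell_i$. You then apply the stability criterion for normality.

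Each route buys something different. Yours makes ``well-defined up to isomorphism'' precise, since $K_F$ becomes a canonical subfield of $\overline K$. It also gets normality and the permutation action from one mechanism, and your dual-projective-space formulation makes coordinate independence transparent. The paper's route is more elementary and explicit, and it is load-bearing later. In the proof that $X_H\to X$ is \'etale, the description of $R_H$ as generated by roots of $\Phi_j$ is combined with the generic specializations $\Psi_\ell$. This is what lets the distinct-roots criterion apply.

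There is one weak spot. Your justification that the $\alpha_{ij}$ are algebraic over $K$ (``one can replace $L$ by the algebraic closure of $K$ in $L$'') is circular, because it presupposes that $F$ already splits over that subfield. Your normality argument needs $\alpha_{ij}\in\overline K$, so this step must actually be proved. The fix is one line and is exactly the paper's observation: since $F$ is monic in $x_1$, each $\alpha_{ij}$ is a root of the monic degree-$d$ polynomial $\Phi_j\in K[x_1]$.
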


\begin{definition}
Given a splitting form $F$ over an infinite field, its associated splitting field $K_F$, and the group $G_F=\Gal(K_F/K)$ as above,  
we call $G_F$ the \emph{Galois group of $F$}.\end{definition}

\begin{proof}
We first show the uniqueness of the associated Galois extension.  
Fix a coordinate system $x_1,\ldots,x_k$.  
If $x'_1,\ldots,x'_k$ is another linear coordinate system over $K$, then the coefficients $\beta_{ij}$ of the linear forms with respect to $x'_1,\ldots,x'_k$ are related to the coefficients $\alpha_{ij}$ by an invertible matrix $A=(a_{ij})$ with entries in $K$, namely
\[
   (x'_1,\ldots,x'_k)^T = A (x_1,\ldots,x_k)^T,
   \qquad [\beta_{ij}] = A [\alpha_{ij}].
\]
Hence $K(\alpha_{ij}) = K(\beta_{ij})$.  
Although this change of coordinates may produce linear forms that are not monic, after rescaling each form to make it monic we obtain coefficients $\alpha'_{ij}$ with
\[
   K(\alpha'_{ij}) \subseteq K(\alpha_{ij}).
\]
By symmetry, the reverse inclusion also holds, so $K(\alpha_{ij})=K(\alpha'_{ij})$.  
Thus the splitting field $K_F=K(\alpha_{ij})$ is independent of the choice of homogeneous coordinates.

\medskip

Next, write the splitting in monic form:
\[
   \ell_i(x) \;=\; x_1 + \sum_{i=2}^k a_{ij} x_j,
   \qquad
   F \;=\; \prod^s_{i=1} \ell_i(x)^{a_i}.
\]
Expanding $F$ as a polynomial in $x_1$ with coefficients in $K[x_2,\ldots,x_k]$, we obtain
\[
   F(x_1,\ldots,x_k) 
   \;=\; x_1^m + b_{m-1}(x_2,\ldots,x_k)\,x_1^{m-1} + \cdots + b_0(x_2,\ldots,x_k).
\]

For each $j=2,\ldots,k$, define a univariate polynomial
\[
   \Phi_j(x_1) := F\!\bigl(x_1;\; x_j=-1, \; x_{j'}=0 \ \text{for}\ j'\neq 1,j\bigr)\in K[x_1].\quad \text{Then}
\]
\[
  \Phi_j(x_1) = \prod_{t} (x_1 - \alpha_{ij})^{a_i} \in K[x_1],
\]
so the elements $\alpha_{tj}$ are roots of $\Phi_j(x_1)$.  
Thus $K_F$ is the splitting field of the family of polynomials $\{\Phi_j(x_1)\}_{j=2}^k$, and in particular $K_F/K$ is normal.

\medskip

Finally, the action of $G_F=\Gal(K_F/K)$ on $K_F$ extends to the polynomial ring $K_F[x_1,\ldots,x_k]$ by acting trivially on the variables $x_i$.  
Since
$
   F = \prod_j \ell_j^{a_j}
$
is $G_F$-invariant, and the factorization of $F$ is unique up to permutation of the $\ell_j$ and multiplication by units, it follows that $G_F$ permutes the linear forms $\ell_j$.  
Because we have normalized the forms to be monic in $x_1$, no nontrivial unit multiples occur, and hence $G_F$ acts by permuting the $\ell_j$ themselves.
\end{proof}\begin{corollary}
Assume $K$ is an infinite perfect field.  
Let 
\[
   F = \prod_i \ell_i^{a_i} \;\in\; K[x_1,\ldots,x_k]
\]
be a splitting form with distinct $\ell_i\in L[x_1,\ldots,x_k]$, for some field extension $L/K$.  Then the associated field extension $K_F/K$ is Galois. Moreover, if we define the reduced form
\[
   F^{\mathrm{red}} := \prod_i \ell_i \;\in\; K[x_1,\ldots,x_k],
\]
then the Galois groups coincide:
\[
   G_{F^{\mathrm{red}}} = G_F.
\]
\end{corollary}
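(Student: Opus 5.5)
By Lemma \ref{1} I first make a $K$-linear change of coordinates and a rescaling by an element of $K^\times$ so that $F$ is monic. I then normalize every linear factor $\ell_i$ so that its $x_1$-coefficient is $1$; after this normalization the $\ell_i$ are still pairwise distinct. By Proposition \ref{unique}, $K_F=K(\alpha_{ij})$ is a normal extension of $K$ and is independent of the chosen coordinates. Since $\operatorname{Gal}(K_F/K)$ permutes the normalized $\ell_i$, it only remains to prove separability. I expect that to be the main step, although it is easy: every $\alpha_{ij}$ is a root of the polynomial $\Phi_j(x_1)\in K[x_1]$ from the proof of Proposition \ref{unique}. Because $K$ is perfect, each irreducible factor of $\Phi_j$ is separable, so every $\alpha_{ij}$ is separable over $K$. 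Hence $K_F$ is generated by separable elements and is normal, and therefore $K_F/K$ is Galois.

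Next I show that $F^{\mathrm{red}}=\prod_i\ell_i$ has coefficients in $K$. The group $G_F$ acts on the set $\{\ell_i\}$ of distinct monic linear forms. Since $F$ is invariant and factorization in $K_F[x_1,\ldots,x_k]$ is unique, this action preserves the multiplicities $a_i$, although that fact is not needed here. The product of all members of a permuted set is fixed by $G_F$, so the coefficients of $F^{\mathrm{red}}$ lie in $K_F^{G_F}=K$, where the last equality uses that $K_F/K$ is Galois. Thus $F^{\mathrm{red}}\in K[x_1,\ldots,x_k]$ is a monic splitting form with the distinct monic factors $\ell_i$.

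Finally I compare the Galois groups. By definition, $K_{F^{\mathrm{red}}}$ is generated over $K$ by the coefficients of the monic factors of $F^{\mathrm{red}}$. These factors are exactly the same $\ell_i$, with the same coefficients $\alpha_{ij}$, so $K_{F^{\mathrm{red}}}=K(\alpha_{ij})=K_F$ as subfields of $L$. Consequently $G_{F^{\mathrm{red}}}=\operatorname{Gal}(K_F/K)=G_F$, and the two groups act identically on the set $\{\ell_i\}$.

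The only subtle point is the normalization at the start. The coordinate change from Lemma \ref{1} has to be applied simultaneously to $F$ and $F^{\mathrm{red}}$. I would use the same $\lambda_i$ for both: if the coefficient of $x_1$ in each $\ell_i$ is nonzero, then $F^{\mathrm{red}}$ is monic up to a scalar exactly when $F$ is. The independence of coordinates proved in Proposition \ref{unique} then shows that the choice of normalization does not affect either field.
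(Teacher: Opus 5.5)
Your proof is correct and follows the same route as the paper: normality of $K_F/K$ comes from Proposition~\ref{unique}, separability from the perfectness of $K$, and the $G_F$-invariance of $\prod_i \ell_i$ (since $G_F$ permutes the monic factors) forces its coefficients into $K_F^{G_F}=K$, giving $K_{F^{\mathrm{red}}}=K(\alpha_{ij})=K_F$. You also make explicit two steps the paper leaves implicit: each $\alpha_{ij}$ is a root of $\Phi_j\in K[x_1]$, and the fixed field is $K$ precisely because the extension is Galois; you further note, correctly, that $F^{\mathrm{red}}$ lies in $K[x_1,\ldots,x_k]$ only when formed from the monic-normalized factors.
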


\begin{proof} By Proposition \ref{unique} is normal, and by the assumption it is separable so it is Galois.
By construction, the Galois group $G_F = \Gal(K_F/K)$ permutes the linear factors $\ell_i$.  
Hence the product $\prod_i \ell_i = F^{\mathrm{red}}$ is invariant under $G_F$, and its coefficients lie in $K$.  
Thus the splitting field $K_{F^{\mathrm{red}}}$ coincides with $K_F$, and the corresponding Galois groups agree:
\[
   G_{F^{\mathrm{red}}} = \Gal(K_{F^{\mathrm{red}}}/K) \;=\; \Gal(K_F/K) \;=\; G_F.
\]
\end{proof}

\subsection{Normal crossings forms}
\begin{definition} Let $K$ be a field of characteristic zero, and   $\AA^n_K:=\Spec(K[x_1,\ldots,x_k])$ be the affine space with a possibly empty SNC divisor $E=\divv(x_{s+1}\cdot\ldots\cdot x_k)$,where $s\geq 0$. We say  that  $$
  H =x^{a_{s+1}}_{s+1}\cdot\ldots\cdot x^{a_k}_k\cdot F\in K[x_1,\ldots,x_k]
$$ is a normal crossing form centered at $V(x_1,\ldots,x_k)$ if $H$ and thus also $F$ are splitting forms and there is a field extension $K_H/K$ and presentation of \[
   H \;=\; \prod_{i=1}^{s} \ell_i(x_1,\ldots,x_k)^{a_i}\cdot x^{a_{s+1}}_{s+1}\cdot\ldots\cdot x^{a_k}_k
\in K_H[x_1,\ldots,x_k]\] such that the $K_H$ linear forms  $\ell_1,\ldots,\ell_s,x_{s+1}\ldots,x_k$ are linearly independent.

\end{definition} 

The following results are immediate:
\begin{lemma} With the above assumptions. Let   $$
  H =x^{a_{s+1}}_{s+1}\cdot\ldots\cdot x^{a_k}_k\cdot F\in K[x_1,\ldots,x_k]
$$ be  a normal crossing form at $V(x_1,\ldots,x_k)$ where $F$ is a splitting form. Let $K_H/K$ be its Galois extension. Then the Galois group  $G_H$ preserves   divisorial coordinates. Moreover we have $$K_H=K_F=K_{F^{red}}$$
\end{lemma}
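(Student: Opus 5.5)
The lemma has two claims. First, $G_H$ fixes each divisorial coordinate. Second, $K_H=K_F=K_{F^{red}}$. I would prove both from unique factorization in $K_H[x_1,\ldots,x_k]$ together with Proposition~\ref{unique} and the Corollary following it.

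\textbf{Setting up the comparison.} First normalize as in Lemma~\ref{1}: after a linear change of the free coordinates only, and a rescaling, $F$ becomes monic. The divisorial coordinates $x_{s+1},\ldots,x_k$ are untouched, since the change $x_i'=x_i-\lambda_i x_1$ can be restricted to free indices $i\le s$. For this we take $x_1$ free, so $s\ge 1$; if $s=0$ then $F$ is a constant and everything is trivial. The divisorial factors $x_j$ are already defined over $K$. Consider the factorization
\[
H=\prod_{i=1}^{s}\ell_i^{a_i}\cdot x_{s+1}^{a_{s+1}}\cdots x_k^{a_k}.
\]
By unique factorization in $K_H[x]$, $F=\prod \ell_i^{a_i}$ up to a unit. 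In particular the $\ell_i$ are exactly the non-divisorial linear factors of $H$.

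\textbf{The fields.} The monic rescaling of each $x_j$ has coefficients in $K$. Hence the field generated by the monic coefficients of all linear factors of $H$ equals the field generated by those of $F$, so $K_H=K_F$. The same would hold with any other normalization, by the coordinate-independence in Proposition~\ref{unique}. The equality $K_F=K_{F^{red}}$ is the preceding Corollary, which applies because $\operatorname{char}K=0$ makes $K$ perfect, and $K$ is infinite.

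\textbf{Invariance of divisorial coordinates.} By Proposition~\ref{unique}, $G_H$ permutes the monic linear factors of $H$. This action is by field automorphisms acting trivially on the variables, so each $x_j$ is fixed, because it has coefficients in $K$. Equivalently, $G_H$ preserves the set $\{\ell_1,\ldots,\ell_s\}$ and separately fixes every divisorial factor $x_{s+1},\ldots,x_k$.

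\textbf{Main obstacle.} The only subtle point is the monic normalization: some $\ell_i$ might have zero $x_1$-coefficient, and a divisorial $x_j$ is never monic in $x_1$. I would handle this by normalizing each factor with respect to its first nonzero coefficient in a fixed ordering, rather than always using $x_1$. The proof of Proposition~\ref{unique} shows that such rescalings do not change the generated field. With that convention, the permutation statement holds without requiring a global monic change of coordinates, which would otherwise move the divisorial coordinates.
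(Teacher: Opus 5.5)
Your proof is correct and rests on the same observation as the paper's: the normalized divisorial factors have coefficients in $\QQ\subseteq K$, so they are fixed by $G_H$ and add nothing to the field, while $K_F=K_{F^{\mathrm{red}}}$ comes from the Corollary to Proposition~\ref{unique}. The only difference is how the non-monic divisorial factors are handled: the paper shifts them, $x_i\mapsto x_i+\alpha_{i1}x_1$ with $\alpha_{i1}\in\QQ$, so that $H$ becomes monic and Proposition~\ref{unique} applies verbatim, whereas you leave them untouched and normalize each factor by its first nonzero coefficient, which requires the easy extra check that neither the generated field nor the exact permutation action depends on this choice.
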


\begin{proof} It is a consequence of Proposition \ref{unique}. One needs to show that $G_H$ preserves  divisorial coordinates
but one can assume that monic form (up to rescaling) is achieved by a coordinate change over the rational numbers such that $x_i=x_i+\alpha_{i1} x_1$ for divisorial coordinates $i=k-s+1,\ldots,k.$  In such case $G_H$ does not move divisorial coordinates occuring in the decomposition of $H$. Since $\alpha_{i1}\in \QQ$, where $i=k-s+1,\ldots,k,$ this also implies that
$K_H=K_F=K_{F^{\red}}$.
\end{proof}


\section{Canonical Centers and Initial Forms}

\subsection{Galois \'Etale Extensions}

Recall the following standard lemma:

\begin{lemma}
Let $R$ be an integral domain with field of fractions $K(R)$, and let $Z=\Spec(R)$ be the associated integral affine scheme.  

Let 
\[
f(y) = y^n + a_1y^{n-1} + \cdots + a_0 \in R[y]
\]
be a monic polynomial, and let $p \in Z$ be a point. Suppose that the reduction
\[
\overline{f}_p(y) := y^n + a_1(p)y^{n-1} + \cdots + a_0(p) \in \kappa(p)[y]
\]
has $n$ distinct roots in some algebraic extension of the residue field $\kappa(p) = R/m_p$.  

Then $f'(y) = \tfrac{df}{dy}$ is a unit in $R[z]/(f)$, and the morphism
\[
\pi \colon \Spec\bigl(R[y]/(f)\bigr) \longrightarrow Z=\Spec(R)
\]
is \'etale in a neighborhood of the fiber $\pi^{-1}(p)$.
\end{lemma}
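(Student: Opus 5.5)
The plan is to reduce the statement to the classical Bezout identity expressing that $f$ and $f'$ are coprime, and then to use the Jacobian criterion for the standard étale algebra $R[y]/(f)$.

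First, the discriminant. Consider the discriminant $\Delta(f)\in R$, which is a universal integer polynomial in $a_0,\ldots,a_{n-1}$. Because its formation commutes with base change, $\Delta(f)(p)=\Delta(\overline f_p)$. The reduction $\overline f_p$ has $n$ distinct roots in an algebraic closure of $\kappa(p)$, so $\Delta(\overline f_p)\neq 0$. Hence $\Delta(f)\notin m_p$, and $\Delta(f)$ is a unit in $R_{\Delta}:=R[\Delta(f)^{-1}]$, which is a Zariski neighborhood of $p$.

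Second, the Bezout identity. For monic $f$ there is the standard identity
\[
A\,f+B\,f'=\pm\Delta(f),\qquad A,B\in R[y],
\]
coming from the resultant $\operatorname{Res}(f,f')$. Here one has $\operatorname{Res}(f,f')=\pm\Delta(f)$ up to sign, since $f$ is monic. Reducing modulo $f$ gives $B f'=\pm\Delta(f)$ in $R[y]/(f)$. Therefore $f'$ becomes a unit in $R_\Delta[y]/(f)$, and in particular it is a unit in the localization of $R[y]/(f)$ at every prime lying over $p$. This is the sense in which the statement's ``$f'$ is a unit in $R[y]/(f)$'' is to be read: the written $R[z]$ should be $R[y]$, and one works locally near the fiber.

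Third, étaleness. The algebra $R[y]/(f)$ is free of rank $n$ over $R$, hence flat. Its module of relative differentials is $\Omega=R[y]\,dy/(f'\,dy)$, which vanishes wherever $f'$ is invertible. So over $\Spec R_\Delta$ the morphism $\pi$ is flat, of finite presentation, and unramified, i.e.\ étale. Since $\pi^{-1}(\Spec R_\Delta)$ is an open set containing $\pi^{-1}(p)$, this gives the statement.

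The only point requiring care is the first step: identifying distinct roots of the reduction with the nonvanishing of $\Delta(f)$ at $p$. This uses that the discriminant commutes with specialization, which holds because $f$ is monic, so its degree does not drop under reduction. An alternative is to argue fiberwise. The fiber $\kappa(p)[y]/(\overline f_p)$ is a separable algebra, so $\overline{f}'_p$ is a unit there. Nakayama's lemma then shows that $f'$ is a unit in the semilocal ring of $R[y]/(f)$ at $\pi^{-1}(p)$, and such units spread out to an open neighborhood because the map is finite.
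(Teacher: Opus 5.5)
Your proof is correct and follows the same outline as the paper's: show that $f$ and $f'$ generate the unit ideal near $p$, so that $f'$ is a unit in $R[y]/(f)$ locally over $p$ and $R\to R[y]/(f)$ is standard \'etale there. Your local reading of the unit claim, with $R[z]$ meaning $R[y]$, matches what the paper actually proves, namely invertibility in $R_p[y]/(f)$. The one difference is in the step the paper states only as ``equivalently, $(f,f')=1$ in $R_p[y]$'': you justify it explicitly via the resultant identity $Af+Bf'=\pm\Delta(f)$ with $\Delta(f)\notin m_p$, which also gives the explicit neighborhood $D(\Delta(f))$, or alternatively via Nakayama on the finite $R_p$-algebra $R_p[y]/(f)$.
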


\begin{proof}
The assumption that $\overline{f}_p(y)$ has no multiple roots means that $\gcd(\overline{f}_p(y), \overline{f}_p'(y)) = 1$ in $(R/m_p)[y]$.  
Equivalently, in the localized ring $R_p[y] = \mathcal{O}_{Z,p}[y]$, we have $(f(y), f'(y))=1$.  

Thus $f'(y)$ becomes a unit in $R_p[y]/(f)$, and the morphism
\[
R_p \;\longrightarrow\; R_p[y]/(f)
\]
is a standard \'etale extension. Therefore, $\pi$ is \'etale in a neighborhood of the fiber $\pi^{-1}(p)$.
\end{proof}
\subsection{Galois \'etale extensions of splitting forms}
\begin{corollary}\label{splitting}
Let $R$ be an integrally closed domain, and let $f_i(y) \in R[y]$ for $i=1,\ldots,k$ be monic polynomials.  
Let $L = K(R)(\alpha_{ij})$ be a finite extension of the fraction field $K(R)$ containing all the roots 
\[
\alpha_{i1}, \ldots, \alpha_{ik_i} \in L
\]
of the polynomials $f_i(y) \in R[y]$.  
Define the finitely generated $R$-subalgebra
\[
   S := R[\alpha_{ij}]_{i,j} \subset L.
\]  
This yields a natural finite morphism
\[
\varphi \colon Y=\Spec(S) \longrightarrow X=\Spec(R).
\]

If $q \in \Spec(S)$ with $p = \varphi(q) \in \Spec(R)$, and if the reductions
\[
\overline{\alpha}_{i1}(q), \ldots, \overline{\alpha}_{ik_i}(q)\quad \in \quad \kappa(q)=S/m_q
\]
are distinct roots of the reductions $\overline{f}_{i,p}(y) \,\in \kappa(p)[y]$, then $\varphi$ is finite \'etale over  a neighborhood of $p \in X$.
\end{corollary}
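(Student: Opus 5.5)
The plan is to reduce the statement to the preceding Lemma, applied one polynomial at a time, and then to use the fact that $S$ is generated over $R$ by the roots. For each $i$, set $S_i:=R[y]/(f_i)$. By the hypothesis at $q$, the reductions $\overline{\alpha}_{i1}(q),\ldots,\overline{\alpha}_{ik_i}(q)$ are $k_i=\deg f_i$ distinct roots of $\overline{f}_{i,p}$ in the extension $\kappa(q)$ of $\kappa(p)$. Hence $\overline{f}_{i,p}$ has no multiple roots. The Lemma then shows that $R\to S_i$ is standard \'etale over some neighborhood $U_i$ of $p$. Put $U:=\bigcap_i U_i$; this is a single neighborhood on which all the $S_i$ are \'etale. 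The tensor product $T:=S_1\otimes_R\cdots\otimes_R S_k$ is then finite \'etale over $U$, since finite \'etale morphisms are stable under base change and composition.

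Next I would realize $S$ as a quotient of $T$. There is a natural surjection $T\to S$ sending the class of $y$ in the $i$-th factor to $\alpha_{ij}$, for a chosen root. Rather than fix a single surjection, it is cleaner to describe $S$ as the image of the map $T\to L$ obtained by sending $y$ in the $i$-th factor to $\alpha_{i1}$, and then to use the generators $\alpha_{ij}$ for $j\ge 2$ as well. Precisely, I will use the multi-root version: $S$ is the image of $T':=\bigotimes_{i}R[y_{i1},\ldots,y_{ik_i}]/(\text{relations }f_i(y)=\prod_j(y-y_{ij}))$. This is the universal splitting algebra of $f_i$, which is finite free over $R$ and \'etale exactly where the discriminant of $f_i$ is a unit. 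Since the roots of $\overline{f}_{i,p}$ are distinct, $\operatorname{disc}(f_i)$ is a unit near $p$, so $T'$ is finite \'etale over $U$.

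The conclusion then rests on two facts. First, $\Spec S$ is a closed subscheme of $\Spec T'$. Second, over $U$ it is a union of connected components. Finite \'etale algebras over $U$ have the property that every closed subscheme flat over $U$, or more concretely every closed subscheme that is a union of irreducible components dominating $U$, is open and closed. This holds because $\Spec T'_U$ is normal, being \'etale over the normal scheme $U$, where $R$ is integrally closed. So its irreducible components coincide with its connected components. Now $S\subset L$ is a domain, so $\Spec S$ is irreducible and dominates $\Spec R$. It is therefore the closure of a generic point of $\Spec T'$, that is, an irreducible component, hence a connected component over $U$. It follows that $S_U$ is a direct factor of $T'_U$ and is therefore finite \'etale over $U$.

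I expect the main obstacle to be the identification of irreducible components with connected components. This requires normality of $\Spec T'$ over a neighborhood of $p$, which follows from $R$ being normal together with \'etaleness. It also requires checking that $\Spec S$ really is a full component, rather than a proper closed subset of one, and this is exactly where integrality of $S$ is used.
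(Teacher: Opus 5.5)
Your argument is correct and is essentially the paper's: both realize $S$ as a quotient of a finite \'etale, hence normal, $R$-algebra over a neighborhood of $p$, and then observe that $\Spec S$, being integral and dominant, is an irreducible and therefore connected component, so it inherits \'etaleness. The only difference is the choice of ambient algebra: the paper uses $\bigotimes_{i,j}R[y_{ij}]/(f_{ij})$, where $f_{ij}\in R[y]$ is the minimal polynomial of $\alpha_{ij}$ (integral closedness of $R$ is used to get $f_{ij}\in R[y]$), and this also covers the reading in which the $\alpha_{ij}$ list only the distinct roots of a possibly non-squarefree $f_i$, whereas your universal splitting algebra requires $k_i=\deg f_i$, which you tacitly assume.
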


\begin{proof}
For each root $\alpha_{ij}\in L$, let $f_{ij}(y_{ij}) \in K(R)[y_{ij}]$ denote its minimal monic polynomial.  
Since $R$ is integrally closed, we have $f_{ij}(y_{ij}) \in R[y_{ij}]$.  

By the preceding lemma, the morphism
\[
W := \Spec\!\bigl(R[y_{ij}]_{i,j}/(f_{ij})\bigr) \;\longrightarrow\; X=\Spec(R)
\]
is finite and \'etale over a neighborhood of $p$.  
After shrinking $X$, we may assume $W \to X$ is \'etale and that $W$ is normal.  

Now consider the natural surjective homomorphism of $R$-algebras
\[
R[y_{ij}]_{i,j}/(f_{ij}) \;\longrightarrow\; S, 
\quad y_{ij} \longmapsto \alpha_{ij}.
\]
This shows that $Y = \Spec(S)$ is a closed subscheme of $W$ that dominates $X$.  
Hence $Y$ is an irreducible component of $W$.  
Since \'etaleness is preserved under passing to irreducible components, it follows that
\[
\Spec(S) \longrightarrow \Spec(R)
\]
is finite \'etale in a neighborhood of $p$.
\end{proof}\medskip

\begin{definition}
Let $R$ be an integral domain with fraction field $K(R)$.  Let $X=\Spec(R)$ be an irreducible smooth smooth affine variety over a field of characteristic zero, with SNC divisor $E_R$. 
Consider affine space $\AA^k_R=\Spec(R[x_1,\ldots,x_k])$.
Let
$
   F \in R[x_1,\ldots,x_k]
$
be a monic splitting form, admitting a factorization
$
   F = \prod_i \ell_i^{a_i}
$
into distinct monic linear forms
\[
   \ell_i(x) = \sum_{j=1}^k \alpha_{ij} x_j,
\]
where the coefficients $\alpha_{ij}$ lie in some field extension $L$ of $K(R)$.  

The \emph{splitting algebra} of $F$ is the finitely generated $R$-subalgebra
\[
   R_F := R[\alpha_{ij}]_{i,j} \subseteq L,
\]
and we set
\[
   X_F := \Spec(R_F).
\]

We refer to $R_F$ (or equivalently $X_F$) as the \emph{finite Galois  extension associated with $F$}.

\end{definition}
More generally  

\begin{lemma}
Let $p \in V(x_1,\ldots,x_k) \simeq X=\Spec(R)$, and let
\[
F \in R[x_1,\ldots,x_k]
\]
be a splitting form of degree $d$ over $K(R)$.
Assume that its reduction at $p$ satisfies $\overline{F}_p \neq 0$,
equivalently $\ord_p(F)=d$.

Then, after multiplying $F$ by a unit at $p$ and performing a suitable
change of coordinates, we may assume that $F$ is monic in a neighborhood
of $p$. Consequently, $F$ determines a unique finite Galois extension
\[
X_F=\Spec(R_F)\longrightarrow X=\Spec(R)
\]
in a neighborhood of $p$, together with the induced Galois cover
\[
\mathbb A^k_{X_F}
=\Spec\!\big(R_F[x_1,\ldots,x_k]\big)
\longrightarrow
\mathbb A^k_X
=\Spec\!\big(R[x_1,\ldots,x_k]\big).
\]
\end{lemma}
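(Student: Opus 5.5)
The plan is to reduce to the field-level statements (Lemma~\ref{1}, Proposition~\ref{unique}) by localizing at $p$ and using that the reduction $\overline{F}_p\in\kappa(p)[x_1,\ldots,x_k]$ is a nonzero form of degree $d$. The argument has three steps: normalize $F$ to be monic near $p$, define the cover from a monic splitting, and check that it is unique and Galois.

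\textbf{Step 1: normalization.} Since $\kappa(p)$ has characteristic zero it is infinite, and $\overline{F}_p\neq 0$. Choose $\lambda=(1,\lambda_2,\ldots,\lambda_k)\in\QQ^k$ with $\overline{F}_p(\lambda)\neq 0$; such $\lambda$ exists because a nonzero polynomial cannot vanish on all of $\QQ^{k-1}$. Now apply the rational linear change of coordinates $x_i'=x_i-\lambda_i x_1$ from Lemma~\ref{1}. After it, the coefficient $u$ of $x_1'^d$ in $F$ equals $F(\lambda)\in R$. Its image in $\kappa(p)$ is $\overline{F}_p(\lambda)\neq 0$, so $u$ is a unit in $\cO_{X,p}$. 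Shrinking $X$ to $D(u)$ and replacing $F$ by $u^{-1}F$ makes $F$ monic in $x_1$ over $R$. Because the change of coordinates is linear and defined over $\QQ$, it is compatible with the product decomposition over $K(R)$. Rescaling each linear factor by its $x_1$-coefficient then gives a monic splitting $F=\prod_i\ell_i^{a_i}$ over some extension $L$.

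\textbf{Step 2: the cover.} Set $R_F:=R[\alpha_{ij}]$ and $X_F=\Spec R_F$. Each $\alpha_{ij}$ is a root of the monic polynomial $\Phi_j(x_1)\in R[x_1]$ constructed in the proof of Proposition~\ref{unique}; monicity of $\Phi_j$ follows from monicity of $F$. Hence $R_F$ is finite over $R$, and its fraction field is $K_F=K(R)(\alpha_{ij})$. This extension is normal by Proposition~\ref{unique} and separable because we are in characteristic zero, so it is Galois. The group $G_F=\Gal(K_F/K(R))$ preserves $R_F$, since it permutes the $\alpha_{ij}$. Extending the action trivially to the $x_j$ gives the induced $G_F$-cover $\AA^k_{X_F}\to\AA^k_X$, which is the base change of $X_F\to X$.

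\textbf{Step 3: uniqueness.} We must show the construction does not depend on the choices of unit, coordinates and monic splitting. By Proposition~\ref{unique}, $K_F$ is independent of the linear coordinate system up to isomorphism. Multiplying by a unit of $R$ does not change the factors up to scalars, and the normalized monic factors are uniquely determined. The remaining issue is that $R_F$ itself, not just its fraction field, must be independent of the choices.

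I expect this last point to be the main obstacle. The entries of a coordinate-change matrix lie in $\QQ\subset R$, but re-normalizing the factors to be monic divides by the new $x_1$-coefficients $\beta_{i1}$. These are integral over $R$, but their inverses need not be. I would first show that $\beta_{i1}$ is a unit in $R_F$ near the fiber over $p$. The product $\prod_i\beta_{i1}^{a_i}$ equals the leading coefficient of $F$ in the new coordinates, and by Step~1 this is a unit near $p$. So every $\beta_{i1}$ is a unit in $R_F$ over the localized base, and the two algebras $R[\alpha_{ij}]$ and $R[\alpha'_{ij}]$ contain each other over a neighborhood of $p$.

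If that direct comparison becomes unwieldy, the fallback is to replace $R_F$ by the integral closure of $R$ in $K_F$. That closure is manifestly canonical and agrees with $R[\alpha_{ij}]$ wherever the latter is normal, which near $p$ follows from Corollary~\ref{splitting} when the reduced factors are distinct.
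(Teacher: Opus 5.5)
Your proposal is correct and follows the paper's route: a generic rational coordinate change plus a unit to make $F$ monic near $p$, the algebra $R_F=R[\alpha_{ij}]$, and independence of choices via the coordinate-change matrix followed by renormalizing the linear factors. Your observation that $\prod_i\beta_{i1}^{a_i}$ is the new leading coefficient, hence a unit near $p$, supplies the justification the paper omits when it asserts that the rescaling factor is a unit in $R[\alpha_{ij}]$. The integral-closure fallback is therefore unnecessary, and it would not in general agree with $R[\alpha_{ij}]$ at the ramified points that the hypothesis $\overline{F}_p\neq 0$ allows.
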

  \begin{proof}
After a generic change of coordinates, we may assume that $F_p$ is monic 
in $x_1$. Multiplying by an invertible function if necessary, we may further 
assume that $F$ is monic in $x_1$ in a neighborhood of $p\in X$.

Viewed as a monic polynomial over $K(R)$, the function $F$ splits as
\[
F=\prod_{i=1}^{s}\ell_i(x_1,\ldots,x_k)^{a_i},
\]
where each factor is monic of the form
\[
\ell_i(x)=x_1+\sum_{j=2}^{k}\alpha_{ij}x_j,
\]
after passing to the Galois extension $K(R)[\alpha_{ij}]$.

This construction induces a finite Galois extension $R[\alpha_{ij}]$
in a neighborhood of $p$.  The resulting extension is independent of
the chosen coordinates, by an argument analogous to the proof of
Proposition~\ref{unique}.

Fix a coordinate system $x_1,\ldots,x_k$.  Let
$x'_1,\ldots,x'_k$ be another linear coordinate system over $R$.
Then the coefficients $\beta_{ij}$ of the linear forms $\ell_i$
with respect to $x'_1,\ldots,x'_k$ are obtained from the coefficients
$\alpha_{ij}$ by an invertible matrix $A=(a_{ij})$ with entries in $R$.
Hence
\[
R[\alpha_{ij}]=R[\beta_{ij}].
\]

Although the forms $\ell_i$ may cease to be monic after the change of
coordinates, each $\ell_i$ can be rescaled by a unit
$\alpha\in R[\alpha_{ij}]$ so that
\[
\ell'_i=\alpha^{-1}\ell_i
\]
is monic in $x'_1$.  Reversing the coordinate change yields
$\ell_i=\beta\,\ell'_i$ with $\alpha\beta=1$ and
$\alpha,\beta\in R[\alpha_{ij}]$.
The coefficients of the monic forms $\ell'_i$ satisfy
\[
\alpha'_{ij}=\beta_{ij}/\alpha=\beta_{ij}\beta,
\]
and therefore
\[
R[\alpha'_{ij}]\subseteq R[\alpha_{ij}].
\]

By symmetry, the reverse inclusion holds, and thus
\[
R[\alpha_{ij}]=R[\alpha'_{ij}].
\]

\end{proof}
\subsection{Galois extensions and the canonical invariant}

\begin{proposition} \label{main} 
Let $X=\Spec(R)$ be an irreducible smooth smooth affine variety over a field of characteristic zero, with SNC divisor $E_R$. 
Consider affine space $\AA^k_X=\Spec(R[x_1,\ldots,x_k])$ with  SNC divisor $E=\divv(x_{s+1}\cdot\ldots\cdot x_k)\cup \pi^{-1}(E_R)$, where $\pi: \AA^k_X\to X$ be the standard projection.
Let
$$
  H =x^{a_{s+1}}_{s+1}\cdot\ldots\cdot x^{a_k}_k\cdot F\in R[x_1,\ldots,x_k]
$$ 
be a form of degree $d$ which is a Normal Crossings over $k(R)$. Consider $H$-admissible center $\cJ=(x_1,\ldots,x_k)^d$ on $\AA^k_R$. Note that  $V(\cJ)\subset \AA^k_X$ can be identified with $X$.
 Let  $p\in V(\cJ)$, such that $\ord_p(H)=d$.
Consider Galois extension, \[
  X_H=\Spec(R_H) \longrightarrow X=\Spec(R)=V(\cJ)
\] , and the induced Galois cover $$ \pi : \AA^k_{X_H}\to \AA^k_X$$ so that \[
   H= \ell_1^{a_1}\cdot\ldots\cdot \ell_s ^{a_s}\cdot x^{a_{s+1}}_{s+1}\cdot\ldots\cdot x^{a_k}_k\in R_H[x_1,\ldots,x_k].\]
 The following are equivalent:
\begin{enumerate}
\item 
$\inv_p(H) = (\underbrace{d,\ldots,d}_{s \text{ times}},\underbrace{d_+,\ldots,d_+}_{k-s \text{ times}})$.  

\item 
$\inv^1_p(H) = (\underbrace{d,\ldots,d}_{s \text{ times}},\underbrace{d_+,\ldots,d_+}_{k-s \text{ times}})$.

\item $\mathcal{J}=(x_1,\ldots,x_k)^d$ be a maximal admissible center of  $H$ at $p\in V(\cJ)$

\item For any $q \in \pi^{-1}(p)\subset X_F$
the linear forms $\ell_1,\ldots,\ell_s, x_{s+1},\ldots,x_k$ are linearly independent at $q$.  
\end{enumerate}
\end{proposition}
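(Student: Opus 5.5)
The plan is to reduce everything to the leading form of $H$ at $p$ and to apply Lemma~\ref{gr}. Write $\overline{H}_p\in\kappa(p)[x_1,\ldots,x_k]$ for the reduction of $H$ at $p$; it is nonzero because $\ord_p(H)=d$. Since $H$ is homogeneous of degree $d$ in $x_1,\ldots,x_k$ with coefficients in $R$, we have $H\in (x_1,\ldots,x_k)^d$. Hence the graded algebra $\gr_p(H)$, taken in a full system of parameters at $p$ (the $x_j$ together with parameters of $R$), is generated in degree $1$ by $\overline{H}_p\, t$, and $\overline{H}_p$ involves only the $x$-variables. By Lemma~\ref{gr}, $\inv^1_p(H)=\inv_p(\gr_p(H))$, so condition (2) becomes a statement about the form $\overline{H}_p$ over $\kappa(p)$.

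\textbf{Step 1 (the linear-algebra core).} The maximal contact of $\gr_p(H)$ in the sense of the cotangent ideal is the degree-one part of $\cD^{d-1}(\overline{H}_p)$, i.e.\ the $\kappa(p)$-span $W$ of all $(d-1)$-st partial derivatives of $\overline{H}_p$. Over $\kappa(q)$ we have
\[
\overline{H}_p=\prod_i \overline{\ell}_i^{\,a_i}\prod_{j>s} x_j^{a_j}.
\]
I claim that $W\otimes\kappa(q)$ equals the span of $\overline{\ell}_1,\ldots,\overline{\ell}_s,x_{s+1},\ldots,x_k$. The argument uses apolarity in characteristic $0$. A vector $v$ annihilates $W$ exactly when $\partial_v$ kills every $(d-1)$-st derivative of $\overline{H}_p$, which means that $\partial_v\overline{H}_p$ has all its $(d-1)$-st derivatives zero. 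Since $\partial_v\overline{H}_p$ is a form of degree $d-1$, this forces $\partial_v\overline{H}_p=0$. A product of linear forms is annihilated by $\partial_v$ precisely when every factor is annihilated, since the product is invariant under translation by $v$ if and only if each factor is. So the annihilator of $W$ is the common kernel of the factors, and therefore $W$ is their span. Consequently $\inv^1_p(H)$ has length $\dim W$ with all weights equal to $d$. This gives (2) $\Leftrightarrow$ (4): (4) says that the span has dimension $k$ at every $q$, and since $G_H$ permutes the $\ell_i$, this dimension is the same for every $q$ over $p$.

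\textbf{Step 2 (logarithmic refinement).} I still need to see that, when $\dim W=k$, exactly $s$ of the weights are free ($d$) and $k-s$ are divisorial ($d_+$). The divisorial coordinates $x_{s+1},\ldots,x_k$ lie in $W$ because they are factors of $\overline{H}_p$, so the maximal adapted contact may contain them. The free part is then a complement of dimension $s$. In the other direction, no choice with more divisorial entries is possible, because the only divisors through $p$ in the $x$-directions are the $x_j$ with $j>s$. This bookkeeping with the adapted parameters, which must be kept compatible with $\pi^{-1}(E_R)$, is the step I expect to need the most care, and I would model it on the computation in Lemma~\ref{invar}.

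\textbf{Step 3 (closing the loop).} To get (2) $\Leftrightarrow$ (1) $\Leftrightarrow$ (3), I note that once the maximal contact is all of $x_1,\ldots,x_k$, the coefficient ideal on $V(x_1,\ldots,x_k)=X$ vanishes. Indeed every monomial $x^\alpha$ in $H$ has $|\alpha|=d$, never $|\alpha|<d$, so $C=0$, and the inductive formula gives $\inv_p(H)=\inv^1_p(H)$. The center $\cJ=(x_1,\ldots,x_k)^d$ is admissible and realizes exactly this vector, so it is maximal admissible. Conversely, if (1) or (3) holds, then $\inv^1_p(H)$ is the first block of $\inv_p(H)$. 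Since $\inv_p(H)$ already has length $k$ with all weights $d$ or $d_+$, and $\inv^1_p(H)$ can involve at most the $k$ directions of the tangent span, (2) follows.
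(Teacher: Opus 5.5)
Your argument is correct. It shares the paper's skeleton: reduce via Lemma~\ref{gr} to the leading form $\overline{H}_p$, then compare $\inv^1_p(H)$ with linear (in)dependence of the reduced factors at $q$. The core step, however, is different.

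The paper argues one direction at a time. If $\overline{\ell}_{1q},\ldots,\overline{\ell}_{sq},x_{s+1},\ldots,x_k$ are dependent, then $\overline{H}_q$ lies in a center $\cJ'$ with fewer free coordinates, so $\inv^1_p(H)\ge\inv(\cJ')>\inv(\cJ)$. If they are independent, it uses the sandwich $\inv(\cJ)\le\inv_p(H)\le\inv^1_p(H)=\inv^1_q(\overline{H}_q)=\inv(\cJ)$. The lower bound comes from admissibility of $\cJ$, so this gives (4)$\Rightarrow$(1) without any coefficient-ideal computation.

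You instead compute $\inv^1_p(H)$ exactly, via the apolarity identity $W\otimes\kappa(q)=\langle\overline{\ell}_{1q},\ldots,\overline{\ell}_{sq},x_{s+1},\ldots,x_k\rangle$. This needs a characteristic-zero lemma the paper avoids. In return, since $W$ is defined over $\kappa(p)$, the passage between $\kappa(q)$ and $\kappa(p)$ becomes plain base change. The paper uses that passage tacitly when it builds $\cJ'$ from the $\overline{\ell}_{iq}$ and writes $\inv^1_p=\inv^1_q$. Independence from the choice of $q$ is also automatic in your version, so the appeal to $G_H$ is not needed.

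One tightening is needed. The logarithmic order rewards fewer free entries, so the ``length $\dim W$'' claim of Step~1 should rest on your Step~2 bookkeeping for every value of $\dim W$, not only $\dim W=k$. Since $W\supseteq\langle x_{s+1},\ldots,x_k\rangle$, and $W$ meets the span of all divisorial parameters at $p$ (including those from $\pi^{-1}(E_R)$) exactly in that subspace, the first block has $\dim W-(k-s)$ entries $d$ and $k-s$ entries $d_+$. For $\dim W<k$ this is strictly larger than the target vector, which is precisely what (2)$\Rightarrow$(4) needs.
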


\begin{proof} $(1)\Leftrightarrow(3)\Rightarrow(2)$ follows directly from the definitions.

By Lemma~\ref{invar}, since $H \subset \mathcal J=(x_1,\ldots,x_k)^d$
and $\mathcal J$ is a maximal $H$-admissible center at $\eta_{\mathcal J}$,
where $H$ is NC, we obtain
\[
\inv(\mathcal J)
=
(\underbrace{d,\ldots,d}_{s\text{ times}},
 \underbrace{d_+,\ldots,d_+}_{k-s\text{ times}}).
\]
Hence $\mathcal J$ is maximal admissible at a point $p\in V(\cJ)$ if and only if
\[
\inv_p(H)=\inv(\mathcal J).
\]

$(2)\Rightarrow (4)$ Assume  $\inv^1_p(H)=\inv^1(\cJ)=(\underbrace{d,\ldots,d}_{s \text{ times}},\underbrace{d_+,\ldots,d_+}_{k-s \text{ times}})=\inv(\cJ)$. By  Lemma \ref{gr},
\[
   \inv^1_p(H) = \inv^1_p(\inn_p(H)) = \inv^1_p(\overline{H}_p).
\]
 Then  for $q\in \pi^{-1}(p)$  we have  the decomposition of the forms over the field $\kappa(q)$: $$\overline{H}_q= \overline{\ell}_{1q}^{a_1},\ldots,\overline{\ell}_{sq}^{a_s},\overline{x}_{s+1q}^{a_{s+1}}\cdot\ldots\cdot ,\overline{x}_{kq}^{a_k} $$ We deduce that  $\overline{\ell}_{1q},\ldots,\overline{\ell}_{sq},\overline{x}_{s+1q},\ldots,\overline{x}_{kq}$ are linearly independent at $q$, and $$\ell_1,\ldots,\ell_s, x_{s+1},\ldots,x_k$$ are linearly independent at $q$, otherwise $\overline{H}_p$ depended upon a smaller number of coordinates , those linearly independent among $$\overline{\ell}_{1q},\ldots,\overline{\ell}_{sq}, \overline{x}_{s+1q},\ldots, \overline{x}_{kq},$$ 
say $$\overline{\ell}_{1q},\ldots,\overline{\ell}_{s-1q}, \overline{x}_{s+1q},\ldots, \overline{x}_{kq},$$  so that  $\overline{H}_q\subset \cJ'=(\overline{\ell}_{1q},\ldots,\overline{\ell}_{s-1q}, \overline{x}_{s+1q},\ldots, \overline{x}_{kq})^d$,which implied that  that $\inv^1_p(H) = \inv^1_p(\overline{H}_p)\geq \inv(\cJ')=(\underbrace{d,\ldots,d}_{s-1 \text{ times}},\underbrace{d_+,\ldots,d_+}_{k-s \text{ times}})>\inv(\cJ)$.

$(4)\Rightarrow (2)$ Conversely if $\ell_1,\ldots,\ell_s, x_{s+1},\ldots, x_k$ are linearly independent at $q$ as well as their reductions over $\kappa(q)$ then $$\inv_p(H)\leq \inv_p^1(H)=\inv_q^1(H)=\inv_q^1(\overline{H}_q)=\inv^1(\cJ)= \inv(\cJ).$$
On the other since $H\subset J$, $\inv_p(H)\geq \inv(\cJ)$. So $\inv_p(H)= \inv(\cJ)$.

\end{proof}

\begin{proposition} \label{main0} Under the above assumptions
if the conditions  from the previous proposition hold, then:
\begin{enumerate}
\item The Galois extension morphism $\pi : X_H=X_F \to X=V(\cJ)$ is \emph{\'etale} at $p$, as well as the induced Galois cover  $\mathbb{A}^n_{X_H}\to \mathbb{A}^n_{X}$
\item The forms $H$ and  $F$ has normal crossings at $p$, and split into linear factors in the \'etale neighborhood $\mathbb{A}^n_{X_H}$.  
\end{enumerate}
Thus $F,H$ have  normal crossings at all points of $V(J)$ where $\cJ$ is maximal admissible for $\cJ$. In particular, there exists a Galois \'etale extension $X_H \to X$ such that 
\[
   H= F \cdot \prod_{i=s+1}^k x^{a_{i}}_{i}
=\prod_{i=1}^s \ell_i^{a_i}\cdot\prod_{i=s+1}^k x^{a_{i}}_{i}
\]
defines a simple normal crossing (SNC) divisor on  the induced Galois cover
\[
   \mathbb{A}^n_{X_H} \longrightarrow \mathbb{A}^n_X,
\]
with respect to the partial coordinate system  adapted to $E$ given by the linear forms $$\ell_1,\ldots,\ell_s,x_{s+1},\ldots,x_k$$ on $X_H$.
\end{proposition}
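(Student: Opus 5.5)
The plan is to derive both assertions from condition (4) of Proposition~\ref{main}: for every $q\in\pi^{-1}(p)$ the forms $\ell_1,\ldots,\ell_s,x_{s+1},\ldots,x_k$ remain linearly independent over $\kappa(q)$. Étaleness will come from Corollary~\ref{splitting} applied to the auxiliary univariate polynomials $\Phi_j$ from the proof of Proposition~\ref{unique}. The normal crossings statement will then follow by using the $\ell_i$ as coordinates on the étale cover.

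\textbf{Étaleness.} First I reduce $F$ to monic form in $x_1$ near $p$. This is possible by the lemma preceding Proposition~\ref{main}, since $\ord_p(H)=d$. The change of coordinates can be chosen over $\QQ$ and fixes the divisorial coordinates $x_{s+1},\ldots,x_k$, so condition (4) is unaffected. Write $\ell_i=x_1+\sum_{j\ge2}\alpha_{ij}x_j$ with $\alpha_{ij}\in R_H$. For each $j$, the coefficients $\alpha_{1j},\ldots,\alpha_{sj}$ are roots of $\Phi_j$, suitably adapted to $F$.

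The main obstacle is that Corollary~\ref{splitting} asks for the \emph{individual} roots $\alpha_{ij}(q)$ to be distinct for each fixed $j$. Condition (4), however, only gives that the reduced forms $\overline\ell_{iq}$ are pairwise distinct, and even linearly independent. To bridge this, I will use a generic linear combination. Since the $\overline\ell_{iq}$ are distinct for the finitely many $q$ over $p$, a general $\QQ$-linear coordinate change $x_j\mapsto x_j+\lambda_j x_2$ yields a single variable, say $x_2$, whose coefficients $\alpha_{i2}(q)$ are pairwise distinct for $i=1,\ldots,s$. This works because distinct linear forms $\ell_i\ne\ell_{i'}$ differ on a general line.

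Next, I claim that the field $K(R)(\alpha_{i2})$ already contains all the $\alpha_{ij}$. To see this, write $\ell_i$ as the unique factor of $F$ vanishing on a prescribed root. Equivalently, observe that $R[\alpha_{ij}]$ is generated over $R[\alpha_{i2}]$ by elements that are rational functions of the $\alpha_{i2}$ with denominators given by Vandermonde-type differences $\alpha_{i2}-\alpha_{i'2}$. These denominators are units at $q$. Then Corollary~\ref{splitting}, applied to the single polynomial $\Phi_2$ whose reduction has distinct roots at $q$, shows that $R[\alpha_{i2}]$ is étale over $X$ near $p$. Since $R_H$ agrees with it after localizing at $p$, $X_H\to X$ is étale at $p$. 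The base change $\mathbb A^k_{X_H}\to\mathbb A^k_X$ is then étale as well.

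\textbf{Normal crossings.} On $\mathbb A^k_{X_H}$, near any point over $p$, the forms $\ell_1,\ldots,\ell_s,x_{s+1},\ldots,x_k$ are linear with coefficients in $R_H$ and are independent modulo $m_q$ by (4). Completing them to a full set of independent linear forms over $\kappa(q)$ and lifting, they form a partial system of regular parameters that is adapted to $E$, because the divisorial $x_i$ are unchanged. Hence $H=\prod\ell_i^{a_i}\prod x_i^{a_i}$ is a monomial in these parameters, so it is SNC on the cover, and therefore NC at $p$ on $\mathbb A^k_X$ since the cover is étale. The same argument applies to $F=\prod\ell_i^{a_i}$.

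Finally, since condition (3) of Proposition~\ref{main} holds exactly at the points of $V(\cJ)$ where $\cJ$ is maximal admissible, this gives the NC conclusion at all such points.
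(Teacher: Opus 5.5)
Your argument is correct. The normal-crossings half is the same as the paper's: condition (4) makes $\ell_1,\ldots,\ell_s,x_{s+1},\ldots,x_k$ part of a regular system adapted to $E$ on the étale cover. For étaleness, however, you take a genuinely different route.

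\textbf{The paper's route.} It chooses $k-1$ generic vectors $\beta_\ell$ with $\det[\overline\beta_{\ell j}]\neq 0$ and sets $v_\ell=(\alpha_{ij})\beta_\ell$. Then each specialization $\Psi_\ell=F^{\mathrm{red}}(x_1,\beta_\ell)$ has separable reduction at $p$. Moreover, $R[\alpha_{ij}]=R[v_{\ell i}]$ near $p$ by pure linear algebra, since $\alpha=v\beta^{-1}$ and $\det\beta$ is a unit at $p$. Corollary~\ref{splitting} is then applied to the whole family $\{\Psi_\ell\}$.

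\textbf{Your route, and what it costs.} You use a single generic direction, in effect a primitive element, and recover the remaining coefficients by interpolation. This needs only one generic choice. It also makes visible that étaleness uses only pairwise distinctness of the reduced branches; linear independence enters only for the SNC conclusion. The price is that the generation step, which the paper gets for free from invertibility of $\beta$, must now be proved, and you only sketched it. Make it explicit as follows. Put $P(x_1)=\prod_i(x_1-\alpha_{i2})$ and, for $j\ge 3$,
\[
Q_j(x_1)=-\partial_t F^{\mathrm{red}}(x_1,-1,0,\ldots,-t,\ldots,0)\big|_{t=0}\in R[x_1],
\]
with $-t$ in slot $j$. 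Then
\[
Q_j(\alpha_{i2})=\alpha_{ij}\,P'(\alpha_{i2}),
\]
and $P'(\alpha_{i2})$ divides $\mathrm{disc}(P)\in R$, which is a unit at $p$. Hence $R[\alpha_{ij}]=R[\alpha_{i2}]$ over a neighborhood of $p$. Alternatively, note that $R[\alpha_{i2}]$ is étale, hence normal, near $p$, and so contains the elements $\alpha_{ij}$, which are integral and lie in its fraction field.

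\textbf{A small correction.} Corollary~\ref{splitting} must be applied to $P$, not to $\Phi_2=\prod_i(x_1-\alpha_{i2})^{a_i}$. The reduction of $\Phi_2$ has repeated roots whenever $F$ is non-reduced, which the statement allows. Both $P$ and $F^{\mathrm{red}}$ have coefficients in $R$, because those coefficients are $G_F$-invariant and integral over the normal ring $R$.
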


\begin{proof} Since $\ell_1,\ldots,\ell_s, x_{s+1},\ldots,x_k$ are linearly independent at $p$,
after a linear change of coordinates over $\mathbb Q$ preserving the
exceptional divisor coordinates $x_{s+1},\ldots,x_k$, we may assume that
\[
F^{\mathrm{red}}=\prod_{i=1}^s \ell_i
\]
is monic in $x_1$.

The associated finite Galois extension
\[
R_H=R_F=R_{F^{\mathrm{red}}}=R[\alpha_{ij}]
\]
is generated by the roots $\alpha_{ij}$, for
$i=1,\ldots,s$ and $j=2,\ldots,k$, of the monic polynomials
\[
\Phi_j
=
\prod_{i=1}^s (x_1-\alpha_{ij})
=
F^{\mathrm{red}}(x_1,\; x_j=1,\; x_{j'}=0 \text{ for } j'\neq 1,j)
\in R[x_1].
\]

For $\ell=2,\ldots,k$, consider generic linear combinations
\[
v_\ell=(v_{\ell i})_{i=1}^s,
\qquad
v_{\ell i}=\sum_{j=2}^k \beta_{\ell j}\alpha_{ij},
\]
where
\[
\beta_\ell=(\beta_{\ell2},\ldots,\beta_{\ell k})^T\in R^{k-1}.
\]
Equivalently,
\[
v_\ell = (\alpha_{ij})_{i,j}\,\beta_\ell .
\]

We will show that, for generic choices of $\beta_\ell$,
the elements $v_{\ell i}$ generate $R_H$ and their reductions at $p$
have minimal polynomials without repeated roots. Since
\[
\ell_1=\sum_j \alpha_{1j}x_j,\;\ldots,\;
\ell_s=\sum_j \alpha_{sj}x_j,
\quad x_{s+1},\ldots,x_k
\]
are linearly independent at $q$ over $p$, the rows of the extended matrix
\[
(\alpha_{ij})_{i=1,\ldots,s}^{j=1,\ldots,k}
\]
are linearly independent $(*)$.  In the first column ($j=1$) we have
\[
\alpha_{11}=\cdots=\alpha_{s1}=1 \qquad (**).
\]

For $\ell=2,\ldots,k$, the coordinates of
\[
v_\ell=(v_{\ell i})_{i=1}^s
\]
are roots of the specialization polynomial
\[
\Psi_\ell
=
\prod_{i=1}^s (x_1-v_{\ell i})
=
F^{\mathrm{red}}(x_1,x_2=\beta_{\ell2},\ldots,x_k=\beta_{\ell k})
\in R[x_1].
\]
Reducing modulo $p$, we obtain
\[
\overline{\Psi}_{\ell p}
=
\prod_{i=1}^s (x_1-\overline v_{\ell ip})
=
\overline F(x_1,x_2=\overline\beta_{\ell2p},\ldots,x_k=\overline\beta_{\ell kp})
\in \kappa(p)[x_1].
\]

The required conditions are:
\begin{itemize}
\item $\overline v_{\ell ip}\neq \overline v_{\ell i'p}$ for $i\neq i'$;
\item the vectors
\[
\overline v_{\ell p}
=
(\overline{\alpha}_{ijq})_{i=1,\ldots,s}^{j=2,\ldots,k}\,\overline\beta_\ell,
\qquad \ell=2,\ldots,k,
\]
generate the column space
\[
\operatorname{span}_{\kappa(p)}(\overline{\alpha}_j)_{j=2}^k,
\]
where $\overline{\alpha}_j=(\overline{\alpha}_{ijq})_{i=1}^s$.
\end{itemize}

These conditions are satisfied for $\overline\beta_\ell\in\kappa(p)^{k-1}$
outside finitely many proper linear subspaces defined by
\[
\sum_{j=2}^k
(\overline{\alpha}_{ijq}-\overline{\alpha}_{i'jq})
\,\overline{\beta}_{\ell jp}=0,
\]
and by the nonvanishing condition
\[
\det[\overline{\beta}_{\ell jp}]_{\ell,j=2}^k \neq 0.
\]

Hence $R_F=R[\alpha_{ij}]=R[v_{\ell i}]$, and the reductions
\[
\overline{\Psi}_{\ell p}
=
\prod_{i=1}^s (x_1-\overline v_{\ell i})
\]
have distinct roots.  By Corollary~\ref{splitting}, the Galois extension
$X_H=X_{F^{\mathrm{red}}}\to X$ is \'etale.  The remaining assertions follow
from Lemma~\ref{main}.

\end{proof}

\subsection{Galois group of  NC singularities}
\subsubsection{Weighted Normal Bundle to a Center}(\cite{ W22})

Given a center $\cJ$ on a smooth variety $X$, we can associate to it the graded algebra
\[
  \cO_X[\cJ t] = \cO_X \oplus \cJ t \oplus \cJ^2 t^2 \oplus \cdots,
\]
where the grading is defined by the filtration $\cJ^i$.  
The associated graded algebra 
\[
  \gr_{\cJ}(\cO_X) = \bigoplus_{i \ge 0} \cJ^i / \cJ^{i+1}
\]
is naturally isomorphic to the coordinate ring of the normal bundle $N_{X/\cJ}$ of $X$ along the center $\cJ$.

\begin{lemma}
Let $p \in X$ be a point, and suppose $\cJ$ is a canonical center for an ideal $I$ at $p$.  
Then $\cJ$ is also a canonical center for the initial ideal $\init_{\cJ}(I)$ on the normal bundle $N_{X/\cJ}$.  
In particular, the invariant associated to $I$ at $p$ coincides with that of the initial ideal:
\[
  \inv_{p}(I) = \inv_{p}\!\bigl(\init_{\cJ}(I)\bigr).
\]
\end{lemma}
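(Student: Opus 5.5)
We work with the Rees-algebra formulation of admissibility together with the weighted filtration defined by $\cJ$. Write $\cJ=(x_1^{a_1},\ldots,x_k^{a_k})$ in coordinates adapted to $E$ at $p$. The center defines a $\QQ$-valued filtration on $\cO_{X,p}$: the degree of a monomial $x^\alpha$ is $\sum_i \alpha_i/a_i$. The associated graded ring is the coordinate ring $\cO_{V(\cJ)}[x_1,\ldots,x_k]$ of the weighted normal bundle $N_{X/\cJ}$, with $x_i$ placed in degree $1/a_i$. In this language admissibility $\cI t\subset \cA_\cJ$ says exactly that every $f\in\cI$ has $\cJ$-degree at least $1$. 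The initial ideal $\init_\cJ(\cI)$ is generated by the degree-$1$ parts of the elements of $\cI$. Since $\cA_\cJ$ is itself graded and monomial in the $x_i$, the inclusion $\init_\cJ(\cI)\,t\subset\cA_\cJ$ on $N_{X/\cJ}$ holds. So $\cJ$, viewed on the normal bundle through the identification $N_{X/\cJ}\supset V(\cJ)$ with the same coordinates, is admissible for $\init_\cJ(\cI)$, and this gives $\inv_p(\init_\cJ(\cI))\ge\inv(\cJ)=\inv_p(\cI)$.

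For the reverse inequality we argue by induction on the length of the invariant, using the inductive formula $\inv_p(R)=(\inv^1_p(R),\inv_p(C_{\overline{x}_1t^{1/a_1}}(R)))$ and the reduction property of coefficient ideals.

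\textbf{First entry.} By Lemma~\ref{gr}, $\inv^1_p(\cI)=\inv_p(\gr_p(\cI))$. The point-graded algebra $\gr_p$ of $\init_\cJ(\cI)$ equals the $\gr_p$ of $\cI$ restricted to the relevant degrees. Indeed, both are obtained by taking lowest-order terms at $p$, and because $a_1=\ord_p(\cI)$ is the minimal weight, the order-$a_1$ part of any $f\in\cI$ only involves the monomials of $\cJ$-degree $1$ in the block $\overline{x}_1$, with coefficients reduced at $p$. This yields $\inv^1_p(\init_\cJ\cI)=\inv^1_p(\cI)$.

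\textbf{Maximal contact.} The cotangent ideal $\cD^{a_1-1}$ commutes with taking initial forms on the first block, up to terms of higher filtration degree. Hence $\overline{x}_1$ is also a maximal contact for $\init_\cJ(\cI)$.

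\textbf{Induction step.} Restricting to $V(\overline{x}_1)$, we need to show that the coefficient ideal of $\init_\cJ(\cI)$ is the initial algebra, with respect to $\cJ_{|V(\overline{x}_1)}$, of the coefficient ideal $C_{\overline{x}_1t^{1/a_1}}(\cI)$. By the reduction property, $\cJ_{|V(\overline{x}_1)}$ is maximal admissible for $C_{\overline{x}_1t^{1/a_1}}(\cI)$, so the induction hypothesis applies and the remaining entries of the two invariants agree.

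Lex maximality gives an alternative way to see the reverse inequality. Any center admissible for $\init_\cJ(\cI)$ with a strictly larger invariant could be deformed back to a center admissible for $\cI$, because $\cI$ is a flat degeneration to its initial ideal through the cobordant family $B\to\AA^1$. Semicontinuity of $\inv$ along the fibres $t^{-1}=\lambda$ would then give $\inv_p(\cI)\ge\inv_p(\init_\cJ\cI)$. Either route suffices.

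The main obstacle is the compatibility of coefficient ideals with initial forms: showing that $\init_\cJ$ applied to $\cI$ and then $C_{\overline{x}_1}$ gives the same invariant as $C_{\overline{x}_1}$ applied and then $\init$. I would try the degeneration argument first. On $B=\Spec\,\cO_X[t^{-1},x_it^{w_i}]$ the controlled transform $\cO_B\cdot(\cI t^w)$ restricts over $t^{-1}=0$ to $\init_\cJ(\cI)$ on the normal bundle, and over $t^{-1}\ne0$ to $\cI$ on $X\times G_m$. The point $(p,0)$ is a specialization of $(p,\lambda)$ for $\lambda\ne0$, so upper semicontinuity of $\inv$ on the smooth variety $B$ gives $\inv_{(p,0)}\ge\inv_p(\cI)$, which is consistent with the first paragraph. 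What remains is to produce the opposite inequality. For that I would use $G_m$-equivariance: the maximal admissible center of the homogeneous ideal $\init_\cJ(\cI)$ is $G_m$-stable, hence generated by weighted-homogeneous parameters. A homogeneous parameter of weight $1/b$ lies in the span of the $x_i$ with $a_i=b$. Consequently any admissible center for $\init_\cJ(\cI)$ can be written with parameters drawn from those blocks, and it then pulls back, via the homogeneous lifts of its generators, to a center admissible for $\cI$ with the same invariant. Lex-maximality of $\inv_p(\cI)$ then forces equality.
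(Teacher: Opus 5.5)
Your first route is the paper's own argument: induction on the ambient dimension, a maximal contact $\overline{x}_1$ serving both $\cI$ and $\init_\cJ(\cI)$, and the inductive formula $\inv_p=(\inv^1_p,\inv_p(C_{\overline{x}_1t^{1/a_1}}))$. Your first-entry step via Lemma~\ref{gr} and your maximal-contact step are fine. For the latter: if $D(f)=x_{1j}$ with $f\in\cI$ and $D\in\cD^{a_1-1}$, note that $D$ lowers $\cJ$-degree by at most $(a_1-1)/a_1$ and that $f-[f]_1$ has $\cJ$-degree $>1$. Taking the degree-$1/a_1$ component then gives $x_{1j}=\bar D([f]_1)$ for an operator $\bar D$ of order $a_1-1$ on $N_{X/\cJ}$.

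However, you never establish the reverse inequality $\inv_p(\init_\cJ\cI)\le\inv_p(\cI)$. The induction step needs $C_{\overline{x}_1t^{1/a_1}}(\init_\cJ R)=\init_{\cJ'}\bigl(C_{\overline{x}_1t^{1/a_1}}(R)\bigr)$ with $\cJ'=\cJ_{|V(\overline{x}_1)}$. You state this, call it the main obstacle, and leave it open (the paper also passes over it in one sentence). The degeneration argument you propose instead does not supply it, for two reasons.
\begin{enumerate}
\item Upper semicontinuity on $B$ only yields $\inv_{(p,0)}\ge\inv_p(\cI)$. So your ``alternative'' paragraph, which claims it gives $\inv_p(\cI)\ge\inv_p(\init_\cJ\cI)$, has the inequality backwards.
\item The $G_m$-lifting claim is false as stated. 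It never uses $\inv(\cJ^*)>\inv(\cJ)$ or the maximality of $\cJ$ for $\cI$, and it ignores the tails $f-[f]_1$ of $\cJ$-degree $>1$, which a center admissible for $\cI$ must also absorb. For instance, take $\cI=(xy+x^3)$ at the origin of $\AA^2$. Its maximal admissible center is $\cJ=(x^2,y^2)$ and $\init_\cJ(\cI)=(xy)$. The center $(y,x^4)$ has homogeneous parameters from the single block and is admissible for $(xy)$, but not for $\cI$: $xy+x^3\equiv x^3\pmod{y}$, and $x^3$ has weighted order $3/4<1$.
\end{enumerate}

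The missing identity is a direct computation, and with it your first route is complete. Run the induction for Rees algebras $R\subset\cA_\cJ$; this is needed anyway, since coefficient ideals are Rees algebras. Define $\init_\cJ(R)$ as the image of $R$ under the graded ring homomorphism sending $gt^b\in(\cA_\cJ)_bt^b$ to its $\cJ$-degree-$b$ component $[g]_b$. Then $\init_\cJ(R)$ is generated over $\cO_{N_{X/\cJ}}$ by the images of generators $f_jt^{b_j}$ of $R$. Write $f_j=\sum_\alpha c_{j\alpha}\overline{x}_1^\alpha$ with $c_{j\alpha}$ functions on $V(\overline{x}_1)$, whose $\cJ$-grading is the $\cJ'$-grading. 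Then
\[
[f_j]_{b_j}=\sum_{|\alpha|\le b_ja_1}[c_{j\alpha}]_{b_j-|\alpha|/a_1}\,\overline{x}_1^{\alpha}.
\]
Hence the coefficient algebra of $\init_\cJ(R)$ is generated by the elements $[c_{j\alpha}]_{b_j-|\alpha|/a_1}\,t^{b_j-|\alpha|/a_1}$ with $|\alpha|<b_ja_1$. These are exactly the $\cJ'$-initial forms of the generators of $C_{\overline{x}_1t^{1/a_1}}(R)$, which lie in $\cA_{\cJ'}$ by the reduction property. Since $V(\overline{x}_1)\subset N_{X/\cJ}$ is the weighted normal bundle of $\cJ'$ in $V(\overline{x}_1)$, the induction hypothesis applies to $C_{\overline{x}_1t^{1/a_1}}(R)$ with its maximal admissible center $\cJ'$, giving
\[
\inv_p(\init_\cJ R)=\bigl(\inv^1_p(R),\inv_p(\init_{\cJ'}C_{\overline{x}_1t^{1/a_1}}(R))\bigr)=\bigl(\inv^1_p(R),\inv_p(C_{\overline{x}_1t^{1/a_1}}(R))\bigr)=\inv_p(R).
\]
Together with your admissibility observation in the first paragraph, this shows that $\cJ$ is maximal admissible for $\init_\cJ(R)$.
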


\begin{proof}
The proof is by induction on the ambient dimension.  
Choose a hypersurface of maximal contact that works simultaneously for both $f$ and its initial form (this is always possible).  
With such a choice, the inductive definition of the invariant $\inv$ - expressed in terms of the coefficient ideal on the hypersurface of maximal contact - gives the same value in both cases.  
Thus the invariants coincide, and the canonicity of $\cJ$ is preserved for the initial ideal.
\end{proof}

\subsubsection{Galois \'Etale Splitting of the Initial Form}

\begin{lemma}\label{etale}
Let $\mathcal I$ be a coherent ideal on a smooth affine variety $X$ with SNC divisor $E$, 
and let $\mathcal J$ be a maximal $\mathcal I$-admissible center. 
Assume that $\mathcal I$ is normal crossing of codimension $1$ and order $d$ 
near the generic point $\eta_{\mathcal J}$.

Then $\mathcal I$ is locally principal along $V(\mathcal J)$. 
For any $p\in V(\mathcal J)$, write locally $\mathcal I=(h)$. 
Set
\[
H:=\mathrm{in}_{\mathcal J}(h)\in \mathrm{gr}_{\mathcal J}(\mathcal O_X).
\]
Then $H$ is normal crossing along $V(\mathcal J)$.

Moreover, after passing to a $\cJ$ split neighborhood (Lemma \ref{split})with projection $U\to V(\cJ)$ and to the associated Galois \'etale cover 
$U_H=U\times_{V(\cJ)}V_H \to U$ induced by the Galois extension $V_H\to V(\mathcal J)$, there exist local parameters 
$x_1,\dots,x_k$ centered at $\mathcal J$, adapted to $E$, such that $\mathcal J=(x_1,\dots,x_k)^d$ and that
\[
\qquad
H=\mathrm{in}_{\mathcal J}(h)=\mathrm{in}_{\mathcal J}(x_1)^{a_1}\cdots \mathrm{in}_{\mathcal J}(x_k)^{a_k}.
\]
is SNC on $N_\cJ(U_H)=N_\cJ(U)\times_{V(\cJ)}V_H$
\end{lemma}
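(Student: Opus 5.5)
I will reduce the statement to Propositions~\ref{main} and~\ref{main0}, applied to the initial form of $h$ on the normal bundle of the center. Local principality comes first. Near $\eta_{\mathcal J}$ the ideal $\mathcal I$ is NC of codimension $1$. By Lemma~\ref{invar}, the maximal admissible center of a codimension-one NC ideal of order $d$ has the form $(u_{1}^d,\dots,u_k^d)$ with all weights equal to $d$, so the invariant along $V(\mathcal J)$ begins with the entry $d$ and has no entries equal to $1$. If $\mathcal I$ contained an element of order one at some $p\in V(\mathcal J)$, that element would be a maximal contact of weight $1$. This would force $\inv_p(\mathcal I)$ to begin with $1<d$, contradicting $\inv_p(\mathcal I)=\inv(\mathcal J)$. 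Thus $\inv_p(\mathcal I)=(d,\dots,d,d_+,\dots,d_+)$ and $\mathcal J=(x_1,\dots,x_k)^d$ in suitable parameters adapted to $E$. For local principality, I will use that $\mathcal I$ is an invertible ideal at $\eta_{\mathcal J}$ and that its controlled transform has order $\le$ the order of a single generator. Concretely, pick $h\in\mathcal I$ with $\ord_p(h)=d$, which exists since $\ord_p(\mathcal I)=d$. Principality generically along $V(\mathcal J)$ then propagates by semicontinuity of the order: the locus where $\mathcal I\neq(h)$ is closed and misses $\eta_{\mathcal J}$. I expect this part to need a small extra argument, possibly shrinking $X$ around $p$.

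Next, pass to a $\mathcal J$-split neighborhood $U\to V(\mathcal J)=\Spec R$ via Lemma~\ref{split}, so that $\mathrm{gr}_{\mathcal J}(\mathcal O_U)=R[x_1,\dots,x_k]$. Set $H=\mathrm{in}_{\mathcal J}(h)$, a form of degree $d$ with $\ord_p(H)=d$. By the lemma on the weighted normal bundle, $\mathcal J$ is also maximal admissible for $H$, with $\inv_p(H)=\inv_p(\mathcal I)$. At the generic point $\eta_{\mathcal J}$, the NC presentation $h=u_1^{a_1}\cdots u_k^{a_k}$ (étale locally) has initial form $\prod\mathrm{in}(u_i)^{a_i}$, a product of linearly independent linear forms over $\overline{K(R)}$. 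Moreover, the divisorial $u_i$ may be taken to be the coordinates $x_{s+1},\dots,x_k$. Hence $H$ is a normal crossings form over $K(R)$ in the sense of the definition. This identification of $H$ at $\eta_{\mathcal J}$ is the main conceptual step: one must check that taking initial forms commutes with the étale base change to the splitting neighborhood. I plan to handle this by working in the completion along $V(\mathcal J)$ at the generic point, where $\mathrm{in}_{\mathcal J}$ is the leading homogeneous part.

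Now condition (3) of Proposition~\ref{main} holds at every $p\in V(\mathcal J)$. By the equivalence, condition (4) holds: the forms $\ell_1,\dots,\ell_s,x_{s+1},\dots,x_k$ are linearly independent at every $q$ over $p$. Proposition~\ref{main0} then shows that $V_H\to V(\mathcal J)$ is a Galois étale extension near $p$, as is $\mathbb A^k_{V_H}\to\mathbb A^k_{V(\mathcal J)}$. It also gives the factorization $H=\prod\ell_i^{a_i}\prod x_i^{a_i}$ with independent linear factors, so $H$ is NC along $V(\mathcal J)$ and SNC on $N_{\mathcal J}(U)\times_{V(\mathcal J)}V_H$. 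Finally, on $U_H=U\times_{V(\mathcal J)}V_H$, I lift each $\ell_i$ to the parameter $x_i':=\sum_j\alpha_{ij}x_j$, which is possible because $\alpha_{ij}\in R_H$ are functions on $U_H$. Keeping the divisorial $x_{s+1},\dots,x_k$ fixed, the resulting parameters $x_1',\dots,x_s',x_{s+1},\dots,x_k$ form a partial system adapted to $E$ by linear independence. They still generate $\mathcal J^{1/d}$ and satisfy $\mathrm{in}_{\mathcal J}(x'_i)=\ell_i$, which gives the displayed formula.
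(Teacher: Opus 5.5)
\textbf{The gap is in the local principality step.} The rest of your argument follows the paper: you read off $H=\mathrm{in}_{\mathcal J}(h)$ as a normal crossings form at $\eta_{\mathcal J}$ from the \'etale-local presentation $h=\prod u_i^{a_i}$, you use the normal-bundle lemma to get condition~(3) of Proposition~\ref{main} at every $p\in V(\mathcal J)$, and you conclude with Proposition~\ref{main0}. Your explicit lift $x_i'=\sum_j\alpha_{ij}x_j$ on $U_H$ is a useful addition that the paper leaves implicit.

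The problem is the first assertion, that $\mathcal I$ is locally principal along $V(\mathcal J)$. Choosing $h\in\mathcal I$ with $\ord_p(h)=d$ does give $\mathcal I_{\eta_{\mathcal J}}=(h)$. But the conclusion that ``the locus where $\mathcal I\neq(h)$ is closed and misses $\eta_{\mathcal J}$'' says nothing about $p$. A closed set missing the generic point of $V(\mathcal J)$ can still meet $V(\mathcal J)$ in a proper closed subset through $p$, and shrinking $X$ around $p$ cannot remove it. For example, $\mathcal I=x^2\cdot(y,z)$ equals $(x^2)$ at the generic point of $V(x)$ but is not principal at the origin. What rules this out is that $\ord_p(\mathcal I)=d$ exactly. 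Your argument uses this only to pick $h$, never to control the other generators of $\mathcal I$.

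The missing idea, which is what the paper's proof does, is a divisorial decomposition combined with additivity of order.
\begin{enumerate}
\item Since $\mathcal O_{X,p}$ is a UFD, write $\mathcal I=(g)\cdot\mathcal I'$, with $g$ the gcd of local generators, so that $\mathcal I'$ lies in no height-one prime.
\item Because $\mathcal I_{\eta_{\mathcal J}}$ is principal, $\mathcal I'_{\eta_{\mathcal J}}$ is a principal ideal contained in no height-one prime, hence the unit ideal.
\item So $g$ generates $\mathcal I$ at $\eta_{\mathcal J}$ and $\ord_{\eta_{\mathcal J}}(g)=d$. By upper semicontinuity, $\ord_p(g)\ge d$.
\item Since $\mathrm{gr}_p(\mathcal O_{X,p})$ is a domain, orders add, so
\[
d=\ord_p(\mathcal I)=\ord_p(g)+\ord_p(\mathcal I')\ge d+\ord_p(\mathcal I').
\]
\end{enumerate}
Hence $\ord_p(\mathcal I')=0$, i.e.\ $\mathcal I=(g)$ near $p$. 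Equivalently, your $h$ is $g$ times a unit at $p$. With this step supplied, the rest of your proof goes through.
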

\begin{proof}
Since $\mathcal I_{\eta_{\mathcal J}}$ is normal crossing of codimension~$1$ 
at $\eta_{\mathcal J}$, it is locally principal and generated by 
$h = x^\alpha f$. Let $(h)$ denote its schematic closure in a neighborhood 
of $p \in V(\mathcal J)$. Then
\[
(h) \supset \mathcal I,
\qquad \text{hence} \qquad
\mathcal I \subseteq (h) \subset \mathcal J.
\]
Thus $\mathcal J$ is maximal admissible for $(h)$.

Write $\mathcal I = (h)\mathcal I'$. Since
\[
\ord_p(\mathcal I) = \ord_p(h) = \ord_p(\mathcal J),
\]
we obtain $\ord_p(\mathcal I')=0$, hence 
$\mathcal I'=\mathcal O_X$ near $p$. 
Therefore $\mathcal I=(h)$ in a neighborhood of $p$.

Because $h$ has normal crossings at $\eta_{\mathcal J}$, 
there exists an \'etale neighborhood $U$ of $\eta_{\mathcal J}$ 
with local parameters $z_1,\dots,z_k$
adapted to $E$, such that
\[
h = z_1^{a_1}\cdots z_k^{a_k},
\qquad
\mathcal O_U\cdot \mathcal J=(z_1,\dots,z_k)^d.
\]

Passing to the graded algebra
\[
\gr_{\mathcal J}(\mathcal O_{U,\eta_{\mathcal J}})
\simeq \kappa(\eta_{\mathcal J})[y_1,\dots,y_k],
\]
we obtain
\[
H=\mathrm{in}_{\mathcal J}(h)
=\mathrm{in}_{\mathcal J}(z_1)^{a_1}\cdots
\mathrm{in}_{\mathcal J}(z_s)^{a_k},
\]
which is normal crossing over the {\it generic point $\eta_{\mathcal J}$}.

Since $\mathcal J$ is maximal admissible for $H$ 
at the same points where it is maximal for $\mathcal I$, 
Proposition~\ref{main0} and Lemma \ref{split} imply that after passing first to $\cJ$ split cover  with projection $U\to V(\cJ) $ and to the associated 
Galois \'etale cover $U_H=U\times_{V(\cJ)}{V(\cJ)}_H$,  $H$ defines an SNC divisor on $U_H$. 
More precisely, there exist local parameters 
$x_1,\dots,x_k$ on the corresponding cover $U_F$, 
centered at $V(\mathcal J)$ and adapted to the preimage of $E$, such that $ \cO_{U_F}\cdot\mathcal J=(x_1,\dots,x_k)^d,$
and \[
H=\inn_{\cJ}(H)=\inn_{\cJ}(x_1)^{a_1}\cdots \inn_{\cJ}(x_k)^{a_k}.
\]
\end{proof}\subsubsection{Galois Groups of Normal Crossing Singularities}

The construction above allows one to canonically associate a Galois group 
to any (embedded or non-embedded) normal crossing (NC) singularity.

\begin{definition}
Let $\mathcal I$ be a coherent ideal on a smooth affine variety $X$, 
and let $p\in V(\mathcal I)$ be a point where $\mathcal I$ is NC of codimension $k$.  
Let $\mathcal J$ be a maximal admissible center for $\mathcal I$ at $p$.

Then $\mathcal I$ contains the ideal $\mathcal I_Z$ of a smooth subvariety 
$Z\subset X$ of codimension $k-1$ such that 
$\mathcal I_{|Z}=(f)$ is locally principal and has NC of order $d$ at $p$.  
Let $\mathcal J_Z:=\mathcal J_{|Z}$ and denote by $\eta_{\mathcal J}$ 
its generic point.

The initial form
\[
F:=\mathrm{in}_{\mathcal J_Z}(f)
\in \mathrm{gr}_{\mathcal J_Z}(\mathcal O_Z)
\]
splits over a finite Galois extension 
$\kappa(\eta_{\mathcal J})_F/\kappa(\eta_{\mathcal J})$.

The \emph{Galois group of $\mathcal I$ at $p$} is
\[
\Gal(\mathcal I,p)
:=
\Gal\!\big(\kappa(\eta_{\mathcal J})_F/\kappa(\eta_{\mathcal J})\big).
\]
\end{definition}
\begin{definition}
Let $Y$ be a scheme of finite type over a field~$K$, 
and let $p \in Y$ be a point where $Y$ has NC of codimension~$k$.  
Consider a local closed embedding 
$Y \hookrightarrow X$ into a smooth variety $X$, 
defined by an ideal $\cI_Y \subset \cO_X$.  
Then the \emph{Galois group of $Y$ at $p$} is defined by
\[
\Gal(Y, p) := \Gal(\cI_Y, p).
\]
\end{definition}

\begin{lemma}
The Galois groups $\Gal(\mathcal{I}, p)$ and $\Gal(Y, p)$ are determined uniquely up to isomorphism. \qed
\end{lemma}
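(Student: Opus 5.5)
The plan is to list every choice made in the definition of $\Gal(\mathcal I,p)$ and show that the group is unchanged under each one. The choices are: the smooth subvariety $Z$ with $\mathcal I_Z\subset\mathcal I$; the local generator $f$ of $\mathcal I_{|Z}$; the coordinates used to make $F=\mathrm{in}_{\mathcal J_Z}(f)$ monic; and, for $\Gal(Y,p)$, the local embedding $Y\hookrightarrow X$. The maximal admissible center $\mathcal J$ is unique by the canonicity recalled after Definition~\ref{def:CanonicalInvGeneral}. Hence $V(\mathcal J)$, its generic point $\eta_{\mathcal J}$ and the residue field $\kappa(\eta_{\mathcal J})$ are intrinsic to $(\mathcal I,p)$ and need no further argument.

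\emph{Independence of $f$ and of coordinates.} Two generators of $\mathcal I_{|Z}$ differ by a unit $u$. Then $\mathrm{in}_{\mathcal J_Z}(uf)=\bar u\,\mathrm{in}_{\mathcal J_Z}(f)$, where $\bar u$ is the image of $u$ in $\kappa(\eta_{\mathcal J})^*$. Rescaling a form does not change its linear factors, so it does not change the splitting field $K_F$. Independence of the homogeneous coordinates on the fibers of $N_{\mathcal J_Z}$ is exactly Proposition~\ref{unique}. Its ring-level analogue, the lemma following the definition of the splitting algebra, gives the same statement over a neighborhood.

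\emph{Independence of $Z$.} Let $Z=V(y_1,\dots,y_r)$ and $Z'=V(y'_1,\dots,y'_r)$ be two choices, both contained in $V(\mathcal I)$. By Lemma~\ref{etale2} each $y_i$ is a free maximal contact coordinate of weight $1$ in $\mathcal J$. So $\mathcal J=(y_1,\dots,y_r,\ldots)$ and $\mathcal J=(y'_1,\dots,y'_r,\ldots)$ are two presentations of the same center.

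The normal bundle $N_{\mathcal J}$ over $\eta_{\mathcal J}$ has coordinate ring $\kappa(\eta_{\mathcal J})[\mathrm{in}(y_i),\mathrm{in}(x_j)]$. The initial ideal is $\mathrm{in}_{\mathcal J}(\mathcal I)=(\mathrm{in}(y_1),\dots,\mathrm{in}(y_r),\mathrm{in}_{\mathcal J}(g))$ for a lift $g$ of $f$. This holds because $\mathcal I=(y_1,\dots,y_r,g)$ and the $y_i$ form part of a regular sequence in the graded sense. Restricting $\mathrm{in}_{\mathcal J}(\mathcal I)$ to the linear subspace $V(\mathrm{in}(y_i))$ recovers $F$. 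The linear span of the degree-one (weight $1$) part of $\mathrm{in}_{\mathcal J}(\mathcal I)$ is intrinsic. The quotient of $N_{\mathcal J}$ by it is therefore canonical, and the form $F$ induced on that quotient is intrinsic up to a linear change of coordinates over $\kappa(\eta_{\mathcal J})$ and a scalar. The same construction with $Z'$ yields the same quotient and the same form. By the previous step the splitting field is then the same, and so is the Galois group.

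I expect the main obstacle to be making this identification rigorous. Two points need care: that the initial ideal really has the stated shape, and that the lift $g$ may be changed by elements of $(y_1,\dots,y_r)$ without affecting $\mathrm{in}_{\mathcal J}(g)$ modulo the $\mathrm{in}(y_i)$. I would first try to verify both in the \'etale split neighborhood of Lemma~\ref{split}, where everything becomes explicit.

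\emph{Independence of the embedding for $\Gal(Y,p)$.} Two local embeddings of $Y$ into smooth varieties are dominated, after adding smooth factors, by a common one: embed $Y\hookrightarrow X\times X'$ diagonally, with smooth projections to $X$ and to $X'$. Under a smooth morphism $X''\to X$ with extra coordinates $w$ on which $\mathcal I_Y$ imposes linear equations, the center acquires the free weight-$1$ factors $w$. The residue field at $\eta_{\mathcal J}$ is replaced by a purely transcendental extension, or by no extension at all once we restrict back to a $Z$-type slice. Either way the splitting field is unchanged up to isomorphism, and by the functoriality of the canonical invariant under smooth morphisms so is the Galois group.
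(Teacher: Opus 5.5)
Your handling of the center, of the generator $f$ and of the coordinates is fine. It is all the paper tacitly relies on: the lemma is stated there without proof, resting on the uniqueness of $\mathcal J$ and Proposition~\ref{unique}. The step that fails as written is the independence of $Z$. The difficulty is not either of the two points you flag, both of which are routine in a split neighborhood.

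You treat $N_{\mathcal J}$ as a vector bundle with linear coordinates $\mathrm{in}(y_i),\mathrm{in}(x_j)$, and you claim that the span of the weight-one linear parts of $\mathrm{in}_{\mathcal J}(\mathcal I)$ is intrinsic. It is not. In the $\mathcal J$-grading, $\mathrm{in}(y_i)$ has degree $1$ and $\mathrm{in}(x_j)$ has degree $1/d$. So $F$ sits in the same degree as the $\mathrm{in}(y_i)$, and the graded ring has automorphisms $\mathrm{in}(y_i)\mapsto\mathrm{in}(y_i)+P_i(\mathrm{in}(x))$ with $P_i$ a form of degree $d$ in the $\mathrm{in}(x_j)$.

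Concretely, pick a lift $g\in\mathcal I$ of $f$ with no linear term at $p$. This is possible since $d\ge 2$; for $d=1$ the group is trivial. Then $Z'=V(y_1+g,y_2,\ldots,y_r)$ is an equally legitimate choice. Yet $\mathrm{in}_{\mathcal J}(y_1+g)=\mathrm{in}(y_1)+\mathrm{in}_{\mathcal J}(g)$, where $\mathrm{in}_{\mathcal J}(g)\equiv F\neq 0$ modulo the $\mathrm{in}(y_i)$. Thus the span, the subspace $V(\mathrm{in}(y_i))\subset N_{\mathcal J}$, and any quotient built from them all move with $Z$. Your assertion that ``the same construction with $Z'$ yields the same quotient'' does not hold.

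What is canonical is the lowest graded piece $\mathrm{gr}^{1/d}_{\mathcal J}$: elements of $\mathcal J$-order $\ge 1/d$ modulo those of order $>1/d$. At $\eta_{\mathcal J}$ it is spanned by the $\mathrm{in}(x_j)$, for any choice of coordinates. Let $S$ be the $\kappa(\eta_{\mathcal J})$-subalgebra of $\mathrm{gr}_{\mathcal J}(\mathcal O_X)_{\eta_{\mathcal J}}$ that it generates.
\begin{itemize}
\item For every admissible $Z$, the composite $S\to \mathrm{gr}_{\mathcal J}/(\mathrm{in}(y_1),\ldots,\mathrm{in}(y_r))\cong \mathrm{gr}_{\mathcal J_Z}(\mathcal O_Z)_{\eta_{\mathcal J}}$ is an isomorphism.
\item Check $\mathrm{in}_{\mathcal J}(\mathcal I)=(\mathrm{in}(y_1),\ldots,\mathrm{in}(y_r),F)$ in a $Z$-split neighborhood as in Lemma~\ref{split}. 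That neighborhood contains a copy of $V(\mathcal J)$, so it does not alter $\kappa(\eta_{\mathcal J})$.
\item From this you get $\mathrm{in}_{\mathcal J}(\mathcal I)\cap S=F\cdot S$. So $F$ is, up to a scalar, the generator of an ideal defined without reference to $Z$, and Proposition~\ref{unique} finishes.
\end{itemize}

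Equivalently, $\mathrm{gr}_{\mathcal J}(\mathcal O_{V(\mathcal I)})_{\eta_{\mathcal J}}\cong \mathrm{Sym}(\mathrm{gr}^{1/d}_{\mathcal J})/(F)$. Since $\eta_{\mathcal J}\in V(\mathcal J)\subset V(\mathcal I)=Y$, this also cleans up your embedding step. Passing to $Y\subset X''$ changes neither $V(\mathcal J)$ nor $\kappa(\eta_{\mathcal J})$, so no transcendental extension occurs. The comparison of centers there is the maximal-contact relation of Lemma~\ref{etale2}, as used in the proof of Theorem~\ref{thm:nonembedded-NC}. It is not functoriality under smooth morphisms, since $\mathcal I_{Y,X''}\neq\pi^*\mathcal I_{Y,X}$.

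Finally, $\mathcal I_Z\subset\mathcal I$ means $V(\mathcal I)\subset Z$, not $Z\subset V(\mathcal I)$.
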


\begin{remark}
Normal crossing (NC) singularities need not become simple normal crossing (SNC) on their Galois cover;  
they are only \emph{formally} SNC, i.e., after completion along the maximal admissible center.
\end{remark}
\section{Factorization of Normal Crossing Singularities in {J}-Completion}

Let $\mathcal{I}$ be a coherent ideal on a smooth affine variety with SNC divisor $(X,E)$, and let 
$\mathcal{J} $ be its  maximal admissible center with divisorial $(x_{s+1}, \ldots, x_k)$ .  
Assume that $\mathcal{I}$ is normal crossing of codimension~$1$ in an open neighborhood of $V(\mathcal{J})$.

By Lemma~\ref{etale}, after passing to a suitable coordinate sytsem adapted to $E$ on an \'etale neighborhood of any point 
$p \in V(\mathcal{J})$, we may assume that:
\begin{itemize}
  
  \item $\cJ=(x_1, \ldots, x_k)$
  \item $\mathcal{I} = ( x_{s+1}^{a_{s+1} } \cdots x_k^{a_k} \cdot f)$ is principal, and
  \item $
   F = \operatorname{in}_{\mathcal{J}}(f)
     = x_1^{a_1} x_2^{a_2} \cdots x_s^{a_s} 
     \quad \text{in } \gr_\cJ(\cO_X)=\mathcal{O}_{V(\mathcal{J})}[x_1,\ldots,x_k],
\quad a_i \ge 1$. 
\end{itemize}

\begin{definition}
Let $R$ be an integral domain, let $k(R)$ denote its field of fractions, and let $\overline{k(R)}$ be an algebraic closure of $k(R)$.  
Consider a formal power series
\[
f \in R[[x_1, \ldots, x_k]].
\]
We introduce the following notions:
\begin{itemize}
    \item \textbf{Pre-SNC over R.}  
    The series $f$ is called \emph{pre-simple normal crossing (pre-SNC)} in $R[[x_1, \ldots, x_k]]$ if it can be written as
    \[
      f = x_1^{a_1} x_2^{a_2} \cdots x_s^{a_s} 
          + \sum_{|\alpha| > a_1+\ldots+a_k} c_{\alpha} x^{\alpha},
          \qquad c_{\alpha} \in R,
    \]
    for some integers $a_i \ge 1$.

    \smallskip
    \item \textbf{SNC over R.}  
    The series $f$ is called \emph{simple normal crossing (SNC)} in $R[[x_1, \ldots, x_k]]$ if it can be factorized as
    \[
       f = \prod_{i=1}^{s} (x_i + g_i(x))^{a_i},
       \qquad g_i(x) \in (x_1, \ldots, x_k)^2R[[x_1, \ldots, x_k]].
    \]

    \end{itemize}
\end{definition}
\medskip

\begin{lemma}\label{crucial}
Let $\mathcal{J} = (x_1, \ldots, x_k)^d$ be a maximal  admissible center for a function 
$f \in \mathcal{O}_X$, where $X$ is a smooth variety.  
Assume that
\[
   f = x_1^{a_1} x_2^{a_2} \cdots x_s^{a_s}
       + \sum_{\alpha} c_{\alpha} x^{\alpha}
   \ \in\ 
   \widehat{\mathcal{O}}_{X,\mathcal{J}}
   = \mathcal{O}_{V(\mathcal{J})}[[x_1, \ldots, x_k]]
\]
is simple normal crossing (SNC) in $\widehat{\mathcal{O}}_{X,\mathcal{J}}$.  
Then $f$ is normal crossing (NC) on $X$ along $\mathcal{J}$.
\end{lemma}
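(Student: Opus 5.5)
We need to show that formal SNC along $\mathcal J$ implies NC at points of $V(\mathcal J)$, via an étale neighborhood in which the factorization is algebraic. The plan is to pass from the formal factorization to an étale-local one using Artin approximation, but keeping track of the requirement that the center itself stays fixed.

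\emph{Step 1: reduce to a point.} Fix $p\in V(\mathcal J)$. By Lemma~\ref{split} we pass to a $\mathcal J$-split neighborhood $U\to V(\mathcal J)$, so that $\mathcal O_{X}$ near $p$ is étale over $\mathcal O_{V(\mathcal J)}[x_1,\ldots,x_k]$ and $\widehat{\mathcal O}_{X,\mathcal J}=\mathcal O_{V(\mathcal J)}[[x_1,\ldots,x_k]]$. Localizing at $p$ and completing further, the factorization
\[
f=\prod_{i=1}^s (x_i+g_i)^{a_i},\qquad g_i\in (x_1,\ldots,x_k)^2
\]
persists in $\widehat{\mathcal O}_{X,p}$. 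There the elements $z_i:=x_i+g_i$ for $i\le s$, together with the remaining $x_{s+1},\ldots,x_k$ and parameters $y$ of $V(\mathcal J)$ at $p$, form a formal regular system of parameters: their linear parts agree with those of $x_i$. The divisorial coordinates are untouched, so the system stays adapted to $E$, and any divisorial monomial factor of $f$ is carried along unchanged.

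\emph{Step 2: algebraize.} The equation $f=\prod_i Z_i^{a_i}$ in unknowns $Z_1,\ldots,Z_s$ is a polynomial system over $\mathcal O_{X,p}$, which is the henselian-approximable local ring of a variety. It has the formal solution $Z_i=z_i$. By Artin approximation, for any $N$ there is a solution $Z_i=\tilde z_i$ in the henselization $\mathcal O^h_{X,p}$, hence in some étale neighborhood $U'$ of $p$, with $\tilde z_i\equiv z_i \pmod{m_p^{N}}$. Taking $N\ge 2$, the differentials of the $\tilde z_i$ at $p$ equal those of $x_i$. Hence $\tilde z_1,\ldots,\tilde z_s,x_{s+1},\ldots,x_k$ extend to a regular system of parameters adapted to $E$ on $U'$, and $f=\prod\tilde z_i^{a_i}\cdot(\text{divisorial monomial})$ there. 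This is exactly the NC condition of the Definition at $p$.

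\emph{Main obstacle.} The delicate point is Step 1's passage from completion along $\mathcal J$ to completion at $p$. We need the formal factorization over $\mathcal O_{V(\mathcal J)}[[x]]$ to specialize at every $p$, including points where $\kappa(p)$ is not the generic residue field. This works because the $g_i$ have coefficients in $\mathcal O_{V(\mathcal J)}$ itself (not its fraction field), so there is an honest ring map $\mathcal O_{V(\mathcal J)}[[x]]\to\widehat{\mathcal O}_{X,p}$.

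We must also check that Artin approximation applies with the exponents $a_i\ge 2$. The first thing to try is to approximate the whole system directly. If nonreducedness causes trouble, the fallback is to approximate the reduced equation $f^{\mathrm{red}}$-type relations instead, namely $\prod Z_i$ dividing $f$ with the correct cofactor.
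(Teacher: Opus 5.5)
Your proof is correct, and it uses the same key tool as the paper: Artin approximation applied to the factorization equation $f=\prod Z_i^{a_i}$. The difference is where you apply it.

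The paper approximates the formal solution in the henselization $\mathcal{O}^h_{X,\eta_{\mathcal J}}$ at the generic point of the center, and then asserts that the factorization holds on an \'etale neighborhood of $p$. As written, approximation at $\eta_{\mathcal J}$ only produces an \'etale neighborhood of $\eta_{\mathcal J}$, whose image need not contain a special point $p$. So the passage to $p$ is left implicit. It would need either approximation at $p$ itself, or an approximation theorem for the henselian pair along $V(\mathcal J)$ (the Popescu-type result for excellent henselian pairs).

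You instead push the $\mathcal J$-adic factorization along the honest map $\mathcal{O}_{V(\mathcal J)}[[x_1,\ldots,x_k]]\to\widehat{\mathcal O}_{X,p}$ and apply Artin's theorem in $\mathcal{O}^h_{X,p}$, point by point. You also check, by approximating modulo $m_p^2$, that the $\tilde z_i$ together with the unchanged divisorial coordinates and the parameters $y$ form a regular system adapted to $E$. The paper does not spell this point out.

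The paper's formulation, if made rigorous via the henselian-pair version, would give a single \'etale neighborhood of all of $V(\mathcal J)$ at once. Your pointwise argument uses only the classical 1969 theorem and is enough for the NC condition as defined in the paper.

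Your worry about exponents $a_i\ge 2$ is unfounded. Artin's theorem applies to any finite system of polynomial equations over $\mathcal{O}^h_{X,p}$, so the proposed fallback is unnecessary.
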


\begin{proof}
\footnote{We owe this argument to E.~Bierstone.}
If $f$ is SNC in the $\mathcal{J}$-adic completion $\widehat{\mathcal{O}}_{X,\mathcal{J}}$,  
then the corresponding equation $y^{\alpha} = f$ admits a solution in 
$\widehat{\mathcal{O}}_{X,\mathcal{J}}$ in a neighborhood of any point 
$p \in V(\mathcal{J})$.

By the Artin Approximation Theorem \cite[Theorem 1.10]{Artin}, such a formal solution can be approximated 
by an algebraic power series solution in the Henselization 
$\mathcal{O}^{h}_{X,\eta_{\mathcal{J}}} \subset \widehat{\mathcal{O}}_{X,\eta_{\mathcal{J}}}$ 
of $\mathcal{O}_{X,\eta_{\mathcal{J}}}$.  
Hence, after passing to an \'etale neighborhood of $p \in V(\mathcal{J})$,  
the same factorization holds in $\mathcal{O}_X$.  
Therefore, $f$ is NC on $X$ along $\mathcal{J}$.
\end{proof}

\begin{lemma}\label{generic}
Let $\mathcal{J} = (x_1, \ldots, x_k)^d$ be a maximal  admissible center for a function 
$f \in \mathcal{O}_X$, where $X$ is a smooth variety.  
Assume that $f$ admits a pre-SNC presentation
\[
   f = x_1^{a_1} x_2^{a_2} \cdots x_s^{a_s}
       + \sum_{\alpha} c_{\alpha} x^{\alpha}
       \in \widehat{\mathcal{O}}_{X,\mathcal{J}}
       = \mathcal{O}_{V(\mathcal{J})}[[x_1, \ldots, x_k]].
\]
If $(f)$ is normal crossing (NC) on $X$ at the generic point $\eta_{\mathcal{J}}$,  
then $f$ is SNC in
\[
   \widehat{\mathcal{O}}_{X,\eta_{\mathcal{J}}}
   \,\widehat{\otimes}\,
   \overline{\kappa(\eta_{\mathcal{J}})}=\overline{\kappa(\eta_{\mathcal{J}})}[[x_1, \ldots, x_k]],
\]
\end{lemma}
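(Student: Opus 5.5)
We work entirely in the complete local ring $A:=\widehat{\mathcal O}_{X,\eta_{\mathcal J}}\,\widehat\otimes\,\overline{\kappa(\eta_{\mathcal J})}=\overline{K}[[x_1,\ldots,x_k]]$, with $K=\kappa(\eta_{\mathcal J})$. This is a regular complete local ring over an algebraically closed field. Since $(f)$ is NC at $\eta_{\mathcal J}$, there is an \'etale neighborhood of $\eta_{\mathcal J}$ and adapted parameters $z_1,\ldots,z_k$ with $f=u\cdot z_1^{b_1}\cdots z_r^{b_r}$, $u$ a unit. Passing to completion and base-changing to $\overline K$, we obtain in $A$ a factorization
\[
f=u\prod_{j=1}^r z_j^{b_j},
\]
where the $z_j\in A$ are part of a regular system of parameters, i.e. their linear parts are linearly independent. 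The \'etale neighborhood contributes only a finite separable residue extension, and this is absorbed into $\overline K$. Because $A$ is a UFD and the $z_j$ are pairwise non-associate primes, this is the prime factorization of $f$.

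Next we compare with the pre-SNC presentation by taking initial forms with respect to the $(x)$-adic filtration. On one side, $\mathrm{in}(f)=x_1^{a_1}\cdots x_s^{a_s}$. On the other side, $\mathrm{in}(f)=\overline{u}(0)\prod \mathrm{in}(z_j)^{b_j}$, where each $\mathrm{in}(z_j)$ is a nonzero linear form because $z_j$ has order one. Unique factorization in $\overline K[x_1,\ldots,x_k]$ then gives $r=s$, and after reindexing $b_j=a_j$ and $\mathrm{in}(z_j)=c_j x_j$ with $c_j\in\overline K^*$. Here we use that $\ord(f)=\sum a_i=d$; this also follows from the maximal admissibility of $\mathcal J$, since by Lemma~\ref{gr} the order is computed by the initial form.

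We then normalize. Replace $z_j$ by $z_j':=c_j^{-1}z_j$. Then $z_j'=x_j+g_j'$ with $g_j'\in(x)^2$, and $f=u'\prod (z_j')^{a_j}$ where $u'(0)=1$ after comparing initial forms. It remains to absorb the unit. Since $\overline K$ has characteristic zero and $A$ is complete, $u'$ has an $a_1$-th root $v=1+(\text{higher order})$, given by the binomial series. Set $x_1+g_1:=v\,z_1'$. Then $g_1=g_1'+(v-1)z_1'\in(x)^2$, because $v-1\in(x)$ and $z_1'\in(x)$. This yields
\[
f=\prod_{i=1}^s (x_i+g_i)^{a_i},\qquad g_i\in(x)^2,
\]
which is exactly SNC in $A$.

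The main obstacle is the first step: justifying that the NC structure at $\eta_{\mathcal J}$, given on an \'etale neighborhood of the generic point of $V(\mathcal J)$, really produces a factorization in the $\mathcal J$-adic completion of the form required. Here the coordinates $x_i$ are parameters transversal to $V(\mathcal J)$ and the coefficients live in $\kappa(\eta_{\mathcal J})$. The point is that NC "of codimension one" at $\eta_{\mathcal J}$ means the branches $z_j$ all vanish on $V(\mathcal J)$, which is forced by the order computation $\ord(f)=d=\sum a_i$ along $\mathcal J$. Combined with Cohen's structure theorem, which identifies $\widehat{\mathcal O}_{X,\eta_{\mathcal J}}\cong K[[x_1,\ldots,x_k]]$ with a coefficient field, this shows the $z_j$ lie in the maximal ideal $(x)$. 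I would check this carefully, possibly by invoking Lemma~\ref{etale}. The remaining steps, comparing initial forms and extracting the root of the unit, are routine.
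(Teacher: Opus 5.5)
Your proposal is correct and follows the same route as the paper: NC at $\eta_{\mathcal J}$ gives an SNC factorization on an \'etale neighborhood, and this factorization persists in the completion and after base change to $\overline{\kappa(\eta_{\mathcal J})}$. You also make explicit two steps the paper leaves implicit, namely matching the branches with the $x_i$ by comparing initial forms and absorbing the unit by an $a_1$-th root. The obstacle you flag is automatic and needs no order computation: a branch not vanishing at the point $\eta'$ over $\eta_{\mathcal J}$ is a unit there and is absorbed into $u$, while the remaining branches are part of a regular system of parameters of $\mathcal{O}_{U,\eta'}$, so they lie in its maximal ideal, which completes to $(x_1,\ldots,x_k)$.
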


\begin{proof}
Suppose that $(f)$ is NC on $X$ at $\eta_{\mathcal{J}}$.  
Then there exists an \'etale neighborhood $U$ of $\eta_{\mathcal{J}}$ 
such that $f$ becomes SNC in $\mathcal{O}_U$.  
In particular, at the point $\eta_{\mathcal{J},U} \in U$ lying over $\eta_{\mathcal{J}}$, 
we have
\[
   \widehat{\mathcal{O}}_{U,\eta_{\mathcal{J},U}}
      = \kappa(\eta_{\mathcal{J},U})[[x_1, \ldots, x_k]].
\]

Let $\overline{\kappa(\eta_{\mathcal{J}})}
   = \overline{\kappa(\eta_{\mathcal{J},U})}$ 
be an algebraic closure of the residue field $\kappa(\eta_{\mathcal{J}})$ 
of the generic point of $\mathcal{J}$.  
Since $f$ is SNC on $U$, its factorization persists in the completion and extends
after tensoring with $\overline{\kappa(\eta_{\mathcal{J}})}$, giving
\[
   f = \prod_{i=1}^{k} (x_i + g_i(x))^{a_i}
   \quad \text{in } 
   \overline{\kappa(\eta_{\mathcal{J}})}[[x_1, \ldots, x_k]].
\]
Thus $f$ is SNC in 
$$\widehat{\mathcal{O}}_{X,\eta_{\mathcal{J}}}
   \widehat{\otimes} \overline{\kappa(\eta_{\mathcal{J}})}=\overline{\kappa(\eta_{\mathcal{J}})}[[x_1, \ldots, x_k]].$$
\end{proof}\medskip

\medskip

\subsection{Direct Algorithm for the Factorization of NC Singularities}
\begin{definition}[Residual order]
Let
\[
   f = x_1^{a_1} x_2^{a_2} \cdots x_s^{a_s}
       + \sum_{|\alpha| > d} c_\alpha x^\alpha
   \in R[[x_1, \ldots, x_k]],
\]
be a formal power series whose initial monomial
\[
   x_1^{a_1} x_2^{a_2} \cdots x_s^{a_s}
\]
has total degree
\[
   d := a_1 + \cdots + a_s.
\]
The \emph{residual order} of $f$, denoted $\mathrm{rord}(f)$,
is the order of the higher-degree part:
\[
   \mathrm{rord}(f)
   :=
   \ord\!\Big(\sum_{|\alpha| > d} c_\alpha x^\alpha\Big)
   =
   \min\{\,|\alpha| \mid c_\alpha \neq 0,\ |\alpha|>d\,\}.
\]
If $f = x_1^{a_1} x_2^{a_2} \cdots x_s^{a_s}$ is a pure monomial,
we set $\mathrm{rord}(f)=\infty$.
\end{definition}
\begin{definition}[SNC-resolvability of degree \(e\)]
Let
\[
   f = x_1^{a_1} x_2^{a_2} \cdots x_s^{a_s}
       + \sum_{|\alpha| \ge e} c_\alpha x^\alpha
   \in R[[x_1, \ldots, x_k]],
\]
be a formal power series of order
\[
   d := a_1 + \cdots + a_s,
\]
and assume that the residual order is finite, equal to \(e>d\).

We say that \(f\) is \emph{SNC-resolvable of degree \(e\) over \(R\)}  
if there exists a monomial \(x^\alpha\) of degree \(|\alpha|=e\)
with \(c_\alpha\neq 0\) such that, for some index
\(j\in\{1,\ldots,s\}\),
\[
   x_1^{a_1}\cdots x_j^{a_j-1}\cdots x_s^{a_s}
   \mid x^\alpha.
\]

If \(f = x_1^{a_1} \cdots x_s^{a_s}\) is a pure monomial,
we say that \(f\) is resolvable of degree \(\infty\).

The set
\[
   M_e(f)
   :=
   \bigl\{
      x^\alpha \;\big|\;
      |\alpha|=e,\;
      c_\alpha\neq 0,\;
      x_1^{a_1}\cdots x_j^{a_j-1}\cdots x_s^{a_s}
      \mid x^\alpha
      \text{ for some } j
   \bigr\}
\]
is called the \emph{minimal set of degree \(e\)} of \(f\).
It depends on \(f\) and on the chosen coordinate system
\((x_1,\ldots,x_k)\).
\end{definition}
\begin{lemma}\label{lem:snc-resolvable}
Let $R$ be an integral domain of characteristic~$0$.  
Every series
\[
   f = \prod_{i=1}^k (x_i + g_i)^{a_i}
   \in R[[x_1,\ldots,x_k]],
\]
which is simple normal crossing (SNC) over $R$,
is SNC-resolvable of degree $\rord(f)$.
\end{lemma}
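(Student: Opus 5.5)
The plan is to expand the product $f=\prod_{i=1}^{s}(x_i+g_i)^{a_i}$ (the index runs over the $s$ factors that actually occur), locate its lowest-degree part beyond the initial monomial $m:=x_1^{a_1}\cdots x_s^{a_s}$, and check that this part is nonzero and lies in the monomial ideal $K:=(m/x_1,\ldots,m/x_s)$. The case $\rord(f)=\infty$ is resolvable by definition, so assume $e:=\rord(f)<\infty$. The main obstacle is that a naive first-order expansion $f-m\approx\sum_j a_j (m/x_j)\,g_j$ can cancel in its lowest degree. For example, $(x_1+x_1x_2)(x_2-x_2^2)$ has no cubic term. The whole point is therefore to organize the expansion so that no such cancellation can occur.

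The key step is a canonical splitting of each $g_j$. I would write
\[
g_j=x_j u_j+w_j,
\]
where $u_j\in (x_1,\ldots,x_k)R[[x]]$ and no monomial of $w_j$ is divisible by $x_j$. This is possible since $g_j\in(x)^2$, and it gives $\ord(w_j)\ge 2$. Then
\[
x_j+g_j=x_j(1+u_j)+w_j,
\]
and expanding the product gives three groups of terms. The first is $m\cdot U$ with $U:=\prod(1+u_i)^{a_i}$, a unit with $U-1\in(x)$. The second consists of terms containing exactly one $w_j$, namely $a_j(m/x_j)\,w_j\cdot(\text{unit})$. The third consists of terms containing at least two factors $w_j,w_{j'}$ (counted with multiplicity). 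Let $r':=\min_j\ord(w_j)\ge 2$ and $\delta:=\ord(U-1)\ge1$. The third group has degree $\ge d-2+2r'>d-1+r'$. The second group has lowest-degree part $\sum_j a_j (m/x_j)\,\mathrm{in}(w_j)$, summed over $j$ with $\ord(w_j)=r'$, and this part has degree $d-1+r'$. The first group contributes $m\,\mathrm{in}(U-1)$ in degree $d+\delta$.

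Next I would verify that there is no cancellation among these lowest-degree contributions, using the exponents. A monomial of $(m/x_j)\,\mathrm{in}(w_j)$ has $x_j$-exponent exactly $a_j-1$, since $x_j$ does not divide any monomial of $w_j$. It has exponent $\ge a_i$ in every $x_i$ with $i\neq j$. A monomial of $m\cdot\mathrm{in}(U-1)$ has all exponents $\ge a_i$. Hence the monomial supports coming from different $j$, and from the $U$-part, are pairwise disjoint. Because $\operatorname{char}R=0$, the coefficients $a_j\neq 0$, and $R$ is a domain, the part of $f-m$ in degree $e_0:=\min(d+\delta,\,d-1+r')$ is nonzero. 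All higher-order corrections, coming from multiplying by units or from the third group, lie in strictly higher degree. Therefore $\rord(f)=e_0=e$.

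Finally, every monomial appearing in degree $e$ is either divisible by $m$, and hence by $m/x_j$ for any $j$, or is of the form $(m/x_j)\mu$. So any monomial $x^\alpha$ of degree $e$ with $c_\alpha\neq0$ witnesses SNC-resolvability of degree $\rord(f)$, and the minimal set $M_e(f)$ is nonempty.

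If the ideal $(g_i)$ is empty in some summand, for instance all $w_j=0$, the argument runs through the $U$-part alone. If in addition $U=1$, then $f=m$ and $\rord(f)=\infty$, which is the case already excluded. The only delicate bookkeeping is the degree comparison $d-2+2r'>d-1+r'$, which uses $r'\ge2$, i.e.\ $g_i\in(x)^2$.
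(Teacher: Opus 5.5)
Your argument is correct, but it is organized differently from the paper's.

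\textbf{The paper's route.} It argues iteratively. With $d_0=\min_i\ord(g_i)$ and leading forms $G_i$, if $d_0+d-1<\rord(f)$ then the degree-$(d_0+d-1)$ part $\sum_i a_i x^{\check\alpha_i}G_i$ must vanish, hence $x_i\mid G_i$. One then factors a unit $1+H_i$ out of each $x_i+g_i$ to raise $\min_i\ord(g_i)$. This is repeated until $\min_i\ord(g_i)+d-1=\rord(f)$. At that stage the degree-$\rord(f)$ part is $\sum_i a_i x^{\check\alpha_i}G_i$, which visibly lies in $(m/x_1,\ldots,m/x_s)$.

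\textbf{Your route.} You split $g_j=x_ju_j+w_j$ once, moving all $x_j$-divisible terms into a single unit $U$. You then rule out cancellation by the exponent count: the $x_j$-exponent is exactly $a_j-1$ in one piece and $\ge a_j$ in the others. That same count is what justifies the step ``$\sum_i a_i x^{\check\alpha_i}G_i=0\Rightarrow x_i\mid G_i$'', which the paper states without argument.

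\textbf{What your version buys.} Completeness. The paper asserts that factoring out the units leaves $f$ unchanged. In fact $f$ equals the new product times the unit $\prod_i(1+H_i)^{a_i}$, and the resulting contribution $m(U-1)$ is never tracked. Your example $(x_1+x_1x_2)(x_2-x_2^2)=x_1x_2-x_1x_2^3$ shows this is not cosmetic. After one normalization both $g_i$ become $0$, so the paper's stopping condition $\min_i\ord(g_i)+d-1=\rord(f)=4$ is never met. The resolving monomial $x_1x_2^3$ comes entirely from the unit, which is exactly your case $e=d+\delta$.

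\textbf{What the paper's version buys.} It matches more closely the coordinate-change algorithm that follows it, where the factors are likewise normalized step by step. Your one-shot decomposition instead gives a non-iterative proof with no termination issue. It also yields the explicit value $\rord(f)=\min(d+\delta,\,d-1+r')$.
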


\begin{proof}
Assume $\rord(f)<\infty$ and let
\[
d_0 := \min_i \ord(g_i), 
\qquad
d := \sum_i a_i.
\]
Write $g_i = G_i + (\text{higher order})$, where $G_i$ is homogeneous of degree $d_0$.

Expanding $f$ to the lowest nontrivial degree gives
\[
f = x^\alpha + \sum_i a_i\, x^{\check\alpha_i} G_i + \cdots,
\]
where $x^\alpha = \prod x_i^{a_i}$ and
$\deg(x^{\check\alpha_i} G_i)=d_0+d-1$.

If $d_0+d-1<\rord(f)$, the homogeneous part of that degree must vanish:
\[
\sum_i a_i\, x^{\check\alpha_i} G_i = 0.
\]
Hence $x_i \mid G_i$ for all $i$, so $G_i=x_iH_i$.
Factoring $(1-H_i)$ from each term increases $\ord(g_i)$ without changing $f$.

Repeating this process strictly raises $\min\ord(g_i)$.
After finitely many steps we reach
\[
\min\ord(g_i)+d-1=\rord(f).
\]

At this stage the homogeneous term of degree $\rord(f)$
is
\[
\sum_i a_i\, x^{\check\alpha_i} G_i,
\]
and every monomial in it is divisible by some $x^{\check\alpha_i}$.
Thus $f$ is SNC-resolvable of degree $\rord(f)$.
\end{proof}\begin{lemma}[Base change for SNC--resolvability]
\label{lem:SNC-base-change}
Let \( R \) be an integral domain with field of fractions \( k(R) \).  
Consider a power series
\[
   f
   = x_1^{a_1} x_2^{a_2} \cdots x_s^{a_s}
     + \sum_{\alpha} c_\alpha x^\alpha
   \in R[[x_1, \ldots, x_k]].
\]
If \( f \) is SNC--resolvable of some degree over the algebraic closure
\( \overline{k(R)} \) (that is, as an element of
\( \overline{k(R)}[[x_1, \ldots, x_k]] \)),
then \( f \) is already SNC--resolvable in \( R[[x_1, \ldots, x_k]] \).
\end{lemma}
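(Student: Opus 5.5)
The key observation is that SNC--resolvability of degree $e$, as defined above, is a condition only on the coefficients $c_\alpha$ of $f$ in the fixed coordinate system $(x_1,\ldots,x_k)$, and these coefficients already lie in $R$. So I would prove the lemma by showing that both the residual order and the minimal set $M_e(f)$ do not change under the injective base change $R\hookrightarrow \overline{k(R)}$.

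First, since $R$ is a domain, the maps $R\hookrightarrow k(R)\hookrightarrow \overline{k(R)}$ are injective, and hence so is
\[
R[[x_1,\ldots,x_k]]\hookrightarrow \overline{k(R)}[[x_1,\ldots,x_k]],
\]
acting coefficientwise. The image of $f$ has the same coefficients $c_\alpha$, and $c_\alpha\neq 0$ in $R$ if and only if $c_\alpha\neq 0$ in $\overline{k(R)}$.

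Two consequences follow directly. The pure-monomial case $f=x_1^{a_1}\cdots x_s^{a_s}$, which is resolvable of degree $\infty$, is detected identically over both rings. Otherwise the residual order
\[
e=\rord(f)=\min\{|\alpha| : c_\alpha\neq 0,\ |\alpha|>d\}
\]
is the same over $R$ and over $\overline{k(R)}$. The divisibility condition $x_1^{a_1}\cdots x_j^{a_j-1}\cdots x_s^{a_s}\mid x^\alpha$ is purely combinatorial in $\alpha$, so it does not depend on the ring either. Therefore $M_e(f)$ computed over $\overline{k(R)}$ coincides with $M_e(f)$ computed over $R$. Nonemptiness of $M_e(f)$ over $\overline{k(R)}$, with $e=\rord(f)$, thus gives nonemptiness over $R$, which is exactly SNC--resolvability of degree $e$ over $R$.

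The only real obstacle is interpretive. The hypothesis says ``of some degree,'' so I must check that this degree is forced to be $\rord(f)$, which is the same on both sides. It is: the definition requires the residual order to be finite and equal to $e$, and it requires a monomial of degree exactly $e$ with $c_\alpha\neq 0$. A further point would arise if one read the definition as allowing a coordinate change over $\overline{k(R)}$. The statement, however, fixes the presentation $f=x_1^{a_1}\cdots x_s^{a_s}+\sum c_\alpha x^\alpha$, and $M_e(f)$ is explicitly defined relative to the chosen coordinates. So I would state this convention and conclude with no further argument needed.
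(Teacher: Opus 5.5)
Your proposal is correct and follows the same route as the paper. The paper's proof is the one-line remark that SNC--resolvability depends only on divisibility relations among the monomials with nonzero coefficients in the fixed presentation of $f$, which survive base change. Your observations make that remark explicit: $R\hookrightarrow\overline{k(R)}$ is injective, so $\rord(f)$ and $M_e(f)$ are the same over both rings, and the coordinates are held fixed throughout.
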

\begin{proof}
SNC--resolvability depends only on  the divisibility relations among monomials in the presentation of \(f\).  
Since these properties are preserved under base change to 
\(\overline{k(R)}\), they already hold over \(R\).
\end{proof}

\subsection{Direct Algorithm for SNC Factorization}

Let $R$ be an integral domain of characteristic zero and
\[
  f = x_1^{a_1}\cdots x_k^{a_k} + \sum c_\alpha x^\alpha
  \in R[[x_1,\ldots,x_k]]
\]
be pre-SNC with residual order $e=\rord(f)$.
Assume that $f$ is SNC over $\overline{k(R)}$.

\medskip
\noindent
\textbf{Algorithm.}
Since $f$ is SNC over $\overline{k(R)}$, it is SNC-resolvable over $R$.
Choose a monomial $c_\alpha x^\alpha$ of minimal degree $|\alpha|=e$.
Select $j$ such that
\[
x_1^{a_1}\cdots x_j^{a_j-1}\cdots x_s^{a_s} \mid x^\alpha,
\]
and perform the coordinate change
\[
x_j \longmapsto
x_j + \frac{1}{a_j} c_\alpha
\frac{x^\alpha}{x_1^{a_1}\cdots x_j^{a_j-1}\cdots x_s^{a_s}}.
\]
This removes the chosen term without introducing lower-degree terms.

\medskip
After each substitution either the minimal set $M_e(f)$ shrinks
or the residual order increases.
Thus the invariant $(e,|M_e(f)|)$ decreases lexicographically.

Repeating the process produces transformations
\[
x_i \mapsto x_i + g_i^{(t)}(x),
\quad \ord(g_i^{(t)}) \to \infty.
\]
In the $(x)$-adic topology this converges formally to
\[
x_i \mapsto x_i + g_i(x),
\qquad \ord(g_i)\ge2.
\]

\medskip
\noindent
\textbf{Output.}
In the limit,
\[
f=\prod_{i=1}^s (x_i+g_i(x))^{a_i}
\]
in $R[[x_1,\ldots,x_k]]$.\begin{proposition}\label{SNC}
Let $R$ be an integral domain of characteristic zero.
If
\[
f = x_1^{a_1}\cdots x_s^{a_s}
    + \sum_{|\alpha|>d} c_\alpha x^\alpha
    \in R[[x_1,\ldots,x_k]]
\]
is SNC over $\overline{k(R)}$,
then $f$ is SNC over $R$.
\end{proposition}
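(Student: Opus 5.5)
The plan is to run the Direct Algorithm for SNC Factorization over $R$ itself, and to show that the hypothesis ``SNC over $\overline{k(R)}$'' is preserved at every step. Since we are in the setting of varieties over a field of characteristic zero, I will assume $\QQ\subset R$, so that the divisions by $a_j$ stay in $R$.

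\textbf{Step 1: the substitutions are harmless.} Each substitution
\[
x_j\longmapsto x_j-\tfrac{1}{a_j}c_\alpha\, x^\alpha/(x_1^{a_1}\cdots x_j^{a_j-1}\cdots x_s^{a_s})
\]
has coefficients in $R$. Writing $\beta$ for the exponent of $x^\alpha/(x_1^{a_1}\cdots x_j^{a_j-1}\cdots x_s^{a_s})$, we have $|\beta|=e-d+1\ge 2$. Hence the substitution is an $R$-automorphism $\varphi$ of $R[[x_1,\ldots,x_k]]$ that is tangent to the identity. Two consequences follow:
\begin{itemize}
\item The transformed series $f\circ\varphi$ is again pre-SNC with the same initial monomial $x_1^{a_1}\cdots x_s^{a_s}$.
\item $f\circ\varphi$ is still SNC over $\overline{k(R)}$, since composing the factorization $\prod(x_i+g_i)^{a_i}$ with $\varphi$ gives factors of the same shape.
\end{itemize}

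\textbf{Step 2: behaviour in the critical degree.} Expanding $f\circ\varphi$, the degree-$e$ part changes by exactly $-c_\alpha x^\alpha$. All other changes have degree greater than $e$, because they involve $x^{2\beta}$, or lower monomials of $f$ of degree at least $e$ multiplied by corrections of order at least $2$. So the chosen monomial is killed, the other degree-$e$ terms are unchanged, and no term of degree below $e$ appears. Consequently $|M_e|$ drops by one.

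\textbf{Step 3: every remaining monomial can be removed.} At each stage I apply Lemma~\ref{lem:snc-resolvable} over $\overline{k(R)}$ to the current series, which is legitimate by Step 1. Its proof gives more than stated: every monomial of the degree-$\rord$ part is divisible by some $x^{\check\alpha_i}$. By Lemma~\ref{lem:SNC-base-change} this divisibility is visible over $R$. Therefore the whole degree-$e$ part consists of resolvable monomials, and after finitely many steps (at most the number of degree-$e$ monomials) it is eliminated, so $\rord$ strictly increases.

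\textbf{Step 4: convergence.} Iterating gives automorphisms $\varphi_t$ whose corrections have order tending to $\infty$. The composite $\Phi=\lim_t \varphi_1\circ\cdots\circ\varphi_t$ therefore converges in the $(x)$-adic topology to an $R$-automorphism with $\Phi(x_i)=x_i+h_i$ and $\ord h_i\ge 2$. In the limit $f\circ\Phi=x_1^{a_1}\cdots x_s^{a_s}$. Applying $\Phi^{-1}$, whose coordinates have the form $x_i+g_i$ with $g_i\in(x)^2R[[x]]$, yields
\[
f=\prod_{i=1}^s (x_i+g_i)^{a_i},
\]
which is the required SNC factorization over $R$.

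\textbf{Main obstacle.} The step I expect to need the most care is Step 3. It requires the strengthened form of Lemma~\ref{lem:snc-resolvable}: that \emph{all} monomials of lowest residual degree are resolvable, not just one. Without this, the algorithm could stall at a degree-$e$ monomial that admits no divisor $x^{\check\alpha_j}$. I would extract this strengthening directly from the lemma's proof, where the lowest residual homogeneous part equals $\sum_i a_i x^{\check\alpha_i}G_i$. I would also check that the normalization $G_i\mapsto G_i-x_iH_i$ used there does not alter this conclusion.
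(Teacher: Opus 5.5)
Your proposal is correct and follows essentially the same route as the paper: you run the direct factorization algorithm over $R$, use Lemma~\ref{lem:snc-resolvable} (through its proof) together with base change to guarantee resolvability at each residual order, and pass to the $(x)$-adic limit. Your additions are exactly the details the paper's terse proof leaves implicit: that each substitution preserves SNC over $\overline{k(R)}$, that all lowest-degree residual monomials (not just one) are resolvable, and that $\QQ\subset R$ is needed for the divisions by $a_j$; indeed for $R=\ZZ$ the literal statement fails, since $f=x_1^2(1+x_2)$ is SNC over $\overline{\QQ}$ but its unique factor $x_1\sqrt{1+x_2}$ with linear term $x_1$ does not have coefficients in $\ZZ$.
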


\begin{proof}
Apply the above algorithm.
The divisibility conditions defining SNC-resolvability
are algebraic and preserved under base change.
Hence SNC over $\overline{k(R)}$ implies
SNC-resolvability over $R$.
The formal limit yields an SNC factorization over $R$.
\end{proof}
\begin{corollary}\label{main10}
Let \(X\) be a smooth variety over a field of characteristic zero,
and let \(E\) be an SNC divisor on \(X\).
Let \(h \in \mathcal{O}(X)\) and let \(\mathcal{J}\) be its maximal admissible center.
Assume that \(h\) is generically normal crossings (NC) along \(\mathcal{J}\),
and that the initial form \(\inn_\cJ(h)\) is NC along \(V(\mathcal{J})\).
Then \(h\) is NC along \(\mathcal{J}\).
\end{corollary}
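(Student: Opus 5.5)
The plan is to chain the preceding results: Lemma~\ref{etale} and Proposition~\ref{main0} split the initial form étale locally, Lemma~\ref{generic} gives a factorization over the algebraic closure, Proposition~\ref{SNC} descends it, and Lemma~\ref{crucial} algebraizes it. The argument is local, so fix $p\in V(\mathcal J)$.

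\textbf{Step 1: pass to the splitting cover.} Since $h$ is generically NC along $\mathcal J$, Lemma~\ref{etale} lets us factor out the divisorial part and write $h=x_{s+1}^{a_{s+1}}\cdots x_k^{a_k}\cdot f$. Since $\inn_\cJ(h)$ is NC along all of $V(\mathcal J)$, Proposition~\ref{main0} (via the equivalences of Proposition~\ref{main}) shows that the Galois extension $V(\mathcal J)_H\to V(\mathcal J)$ is étale at $p$. Pass to a $\mathcal J$-split neighborhood $U\to V(\mathcal J)$ (Lemma~\ref{split}) and then to the Galois étale cover $U_H=U\times_{V(\cJ)}V(\cJ)_H$. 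On $U_H$ we get parameters $x_1,\dots,x_k$ adapted to $E$ with $\mathcal J=(x_1,\dots,x_k)^d$ and $\inn_\cJ(f)=x_1^{a_1}\cdots x_s^{a_s}$. Thus $f$ is pre-SNC in $\widehat{\cO}_{U_H,\mathcal J}=R[[x_1,\dots,x_k]]$, where $R$ is the coordinate ring of $V(\mathcal J)_H$ near a point $q$ over $p$. Since NC is an étale-local property, it suffices to prove that $h$ is NC at $q$ on $U_H$.

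\textbf{Step 2: factor generically and descend.} The ideal $(f)$ is NC at the generic point $\eta_{\mathcal J}$, and this persists on the cover. Lemma~\ref{generic} then shows that $f$ is SNC in $\overline{\kappa(\eta_\cJ)}[[x_1,\dots,x_k]]$, which is SNC over $\overline{k(R)}$. Proposition~\ref{SNC} upgrades this to an SNC factorization $f=\prod_{i=1}^s(x_i+g_i)^{a_i}$ over $R$, hence in $\widehat{\cO}_{U_H,\mathcal J}$.

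\textbf{Step 3: algebraize.} Lemma~\ref{crucial} turns the formal SNC factorization into an étale-local one, so $f$ is NC along $\mathcal J$ near $q$. To include the divisorial factor, note that $x_i+g_i$ for $i\le s$ together with $x_{s+1},\dots,x_k$ form part of a regular system of parameters. This holds because their linear parts are $x_1,\dots,x_k$, which are independent at $q$ by condition (4) of Proposition~\ref{main}. Hence $h$ is SNC adapted to $E$ at $q$, and therefore NC at $p$.

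\textbf{Main obstacle.} I expect the delicate point to be Step~2. There the factorization is a priori only over the algebraic closure of the generic residue field, and Proposition~\ref{SNC} must be applied over $R$ itself, not merely over $k(R)$. This requires the coefficients $c_\alpha$ to lie in $R$ and the coordinate changes in the direct algorithm, which divide only by the integers $a_j$, to keep all coefficients in $R$. That is exactly where the Galois cover built in Step~1 is needed: it makes the initial form monomial with respect to coordinates defined over $R$, so the algorithm never leaves $R[[x]]$. A further point to check is that the completion along $\mathcal J$ of $U_H$ is indeed $R[[x]]$. This uses the split projection to $V(\mathcal J)$.
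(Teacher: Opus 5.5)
Your proposal is correct and follows essentially the same route as the paper's proof. You use Lemma~\ref{etale} (via Proposition~\ref{main0}) to pass to the Galois \'etale cover, where $f$ is pre-SNC in $\mathcal{O}_{V(\mathcal{J})}[[x_1,\ldots,x_k]]$, and then chain Lemma~\ref{generic}, Proposition~\ref{SNC} and Lemma~\ref{crucial} in the same order; your added remarks on reattaching the divisorial factor and on keeping the coordinate changes inside $R[[x]]$ only make explicit points the paper leaves implicit.
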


\begin{proof}
Let \(\cJ = (x_1,\ldots,x_k)^d\) be the maximal admissible center of
\(h = x^\alpha f\).
By Lemma~\ref{etale}, the initial form
\(H=\inn_\cJ(h)=x^\alpha F\)
is SNC along \(V(\mathcal{J})\) on an \'etale Galois cover \(X_H\).
Hence \(f\) (and thus \(h\)) admits a pre-SNC presentation in
\[
\widehat{\mathcal{O}}_{X_H,\mathcal{J}}
= \mathcal{O}_{V(\mathcal{J})}[[x_1,\ldots,x_k]].
\]

By Lemma~\ref{generic}, \(h=x^\alpha f\) and \(f\) are SNC in
\[
\widehat{\mathcal{O}}_{X_H,\eta_{\mathcal{J}}}
\widehat{\otimes}
\overline{\kappa(\eta_{\mathcal{J}})}
=
\overline{\kappa(\eta_{\mathcal{J}})}[[x_1,\ldots,x_k]].
\]
Then Proposition~\ref{SNC} implies that \(h\) and \(f\) are SNC in
\(\mathcal{O}_{V(\mathcal{J})}[[x_1,\ldots,x_k]]\).
Finally, by Lemma~\ref{crucial}, this implies that \(h\) is NC along \(\mathcal{J}\).
\end{proof}More precisely applying the above arguments we have direct enhancing of Lemma \ref{etale}:
\begin{proposition}
Let $\mathcal I=(h)$ be a locally principal ideal on a smooth variety $X$
with SNC divisor $E$, and let $\mathcal J$ be an admissible center for $h$.
Assume that $\mathcal J$ is maximal admissible at the generic point
$\eta_{\mathcal J}$ of $V(\mathcal J)$ and that $h$ is NC of order $d$
at $\eta_{\mathcal J}$.
Let $H:=\mathrm{in}_{\mathcal J}(h)$, and let
$\pi:U_H\to U$ be the associated finite Galois cover
as in Lemma~\ref{etale}.

Then:
\begin{enumerate}
\item $h$ is NC at all points of $V(\mathcal J)$
where $\mathcal J$ is maximal admissible.
\item $\pi$ is \'etale over those points.
\item On $U_H$, the function $h$ is formally SNC along
$\mathcal J_H:=\mathcal O_{U_H}\cdot\mathcal J$,
i.e.\ in suitable formal parameters adapted to $E_H$,
\[
h = u_1^{a_1}\cdots u_k^{a_k},
\qquad
\mathcal J_H=(u_1,\ldots,u_k)^d.
\]
\item By Artin approximation, $h$ is SNC along $\mathcal J$
in an \'etale neighborhood.
\end{enumerate}
\end{proposition}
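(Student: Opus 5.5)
The plan is to reduce everything to the initial form $H=\mathrm{in}_{\mathcal J}(h)$ and then lift back to $h$ by the formal factorization algorithm, following the chain used in Corollary~\ref{main10}.

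\emph{Step 1: the initial form.} Since $h$ is NC of order $d$ at $\eta_{\mathcal J}$ and $\mathcal J$ is maximal admissible there, Lemma~\ref{etale} (applied over a neighborhood of $\eta_{\mathcal J}$) shows that $H$ is a normal crossing form over $\kappa(\eta_{\mathcal J})$. In a $\mathcal J$-split neighborhood (Lemma~\ref{split}) we may then view $H$ as a form over $R=\mathcal O(V(\mathcal J))$ with $\mathcal J=(x_1,\dots,x_k)^d$. At a point $p\in V(\mathcal J)$ where $\mathcal J$ is maximal admissible for $h$, the lemma on the weighted normal bundle gives $\inv_p(h)=\inv_p(H)=\inv(\mathcal J)$. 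Hence condition (1) of Proposition~\ref{main} holds at $p$. Proposition~\ref{main0} then yields assertion (2): the Galois extension $V(\mathcal J)_H\to V(\mathcal J)$, and hence $\pi\colon U_H\to U$, is étale over $p$. On $U_H$, the form $H$ factors as $\ell_1^{a_1}\cdots\ell_s^{a_s}x_{s+1}^{a_{s+1}}\cdots x_k^{a_k}$ with linearly independent linear forms. After replacing $x_1,\dots,x_s$ by $\ell_1,\dots,\ell_s$, the function $h$ becomes pre-SNC in $\mathcal O_{V(\mathcal J)_H}[[x_1,\dots,x_k]]$ near the fiber over $p$.

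\emph{Step 2: formal SNC on the cover.} By Lemma~\ref{generic}, the NC hypothesis at $\eta_{\mathcal J}$ makes $h$ SNC over $\overline{\kappa(\eta_{\mathcal J})}[[x]]$. Proposition~\ref{SNC}, applied with $R$ the coordinate ring of $V(\mathcal J)_H$ localized at the points over $p$ (an integral domain of characteristic zero), then gives an SNC factorization $h=\prod(x_i+g_i)^{a_i}$ over $R$. Setting $u_i=x_i+g_i$ proves assertion (3), since the $u_i$ still generate $\mathcal J_H$ formally. The divisorial coordinates $x_{s+1},\dots,x_k$ must be left untouched during the substitutions. This is possible because the algorithm only modifies the variable $x_j$ whose exponent is lowered, and a monomial term divisible by $x_j^{a_j-1}$ times the remaining monomial can always be absorbed into a free variable or into the unit. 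This is the point I would check carefully.

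\emph{Step 3: algebraization.} Lemma~\ref{crucial} (Artin approximation) turns the formal factorization into an SNC factorization in an étale neighborhood of each point over $p$. This gives assertion (4). Composing with the étale map $\pi$ from Step 1, $h$ is NC at $p$, which is assertion (1).

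\emph{Main obstacle.} I expect the hardest part to be Step 2, specifically the passage from $\overline{\kappa(\eta_{\mathcal J})}$ to $R$. The formal algorithm divides by coefficients $c_\alpha$ only through the factor $1/a_j$, which is harmless in characteristic zero. What must be justified is that SNC-resolvability at every stage over $\overline{k(R)}$ forces the needed divisibility of monomials with nonzero coefficients over $R$. I would argue this via Lemma~\ref{lem:SNC-base-change} applied inductively: each substitution has coefficients in $R$, so the intermediate series stays in $R[[x]]$ and remains SNC over $\overline{k(R)}$. Lemma~\ref{lem:snc-resolvable} then applies at every step, and the $(x)$-adic limit converges in $R[[x]]$.
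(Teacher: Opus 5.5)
Your proposal is correct and follows essentially the same route as the paper, which obtains this proposition by rerunning the chain of Corollary~\ref{main10}: Lemma~\ref{etale} together with Propositions~\ref{main} and~\ref{main0} handle the initial form and the \'etaleness of the Galois cover, and then Lemma~\ref{generic}, Proposition~\ref{SNC} and Lemma~\ref{crucial} lift the factorization to $h$. The divisorial issue you flag disappears if you run the algorithm on $f$ in $h=x_{s+1}^{a_{s+1}}\cdots x_k^{a_k}f$, as the paper does, since the initial monomial of $f$ involves only free variables; your inductive justification of Proposition~\ref{SNC} (substitutions stay over $R$, intermediate series stay SNC over $\overline{k(R)}$, so Lemma~\ref{lem:snc-resolvable} applies at every step) usefully makes the paper's terse argument explicit.
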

\begin{theorem}[Center Theorem~\ref{center}]\label{NCC}
Let $\mathcal I$ be an ideal on a smooth affine variety $X$
with an SNC divisor $E$, and let $\mathcal J$ be a maximal
admissible center for $\mathcal I$.
Assume that $\mathcal I$ is normal crossings adapted to $E$
in a neighborhood of the generic point of $\mathcal J$.
Then $\mathcal I$ is normal crossings adapted to $E$
along the center $\mathcal J$.
\end{theorem}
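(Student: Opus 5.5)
The plan is to reduce to the codimension-one (hypersurface) case and then invoke Corollary~\ref{main10}. Let $p\in V(\mathcal J)$ be arbitrary; the goal is to show $\mathcal I$ is NC adapted to $E$ at $p$.

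\emph{Reduction to codimension one.} Since $\mathcal I$ is NC at the generic point $\eta_{\mathcal J}$, say of codimension $r+1$, Lemma~\ref{invar} and Lemma~\ref{etale2} give the shape of the center there: $\mathcal J=(y_1,\ldots,y_r,y_{r+1}^d,\ldots,y_{r+t}^d)$ with $(y_1,\ldots,y_r)\subset\mathcal I$. Because $\mathcal J$ is maximal admissible at $p$ and the invariant is semicontinuous, the first block of the invariant at $p$ is still $(1,\ldots,1)$ ($r$ times). The weight-one maximal contact $\overline y_1 t$ is computed from $T(\mathcal I)=\mathcal I$ and is defined on a neighborhood of $p$. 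Hence the $y_1,\ldots,y_r$ extend to parameters at $p$ lying in $\mathcal I$, and $Z=V(y_1,\ldots,y_r)$ is smooth through $p$.

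By the inductive formula $\inv_p(\mathcal I)=(1,\ldots,1,\inv_p(\mathcal I_{|Z}))$, the restriction $\mathcal J_{|Z}$ is a maximal admissible center of $\mathcal I_{|Z}$ at $p$. Moreover $\mathcal I_{|Z}$ is NC of codimension one at the generic point of $\mathcal J_{|Z}$. By the converse part of Lemma~\ref{etale2}, it therefore suffices to prove that $\mathcal I_{|Z}$ is NC at $p$. So we may assume $r=0$, i.e.\ $\mathcal I$ is NC of codimension one near $\eta_{\mathcal J}$. The divisorial coordinates need care here: the $y_i$ in the first block are free, so $E$ restricts to an SNC divisor on $Z$, and adaptedness is preserved.

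\emph{Codimension-one case.} By Lemma~\ref{etale}, $\mathcal I=(h)$ is principal near $p$, and $H=\inn_{\mathcal J}(h)$ is a normal crossing form along $V(\mathcal J)$ over the generic point. Maximal admissibility of $\mathcal J$ at $p$ passes to $H$ by the lemma on initial ideals in the weighted normal bundle. Proposition~\ref{main} then gives condition (4) at $p$, and Proposition~\ref{main0} shows that the Galois cover $X_H\to V(\mathcal J)$ is étale at $p$ and that $H$ is NC at $p$. Now $h$ is generically NC along $\mathcal J$ and $\inn_{\mathcal J}(h)$ is NC along $V(\mathcal J)$ near $p$, so Corollary~\ref{main10} applies and shows that $h$ is NC along $\mathcal J$, in particular at $p$.

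Corollary~\ref{main10} is itself assembled from Proposition~\ref{SNC}, which descends SNC from $\overline{\kappa(\eta_{\mathcal J})}$ to $\mathcal O_{V(\mathcal J)}$ in the formal completion, and from Lemma~\ref{crucial}, which uses Artin approximation.

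\emph{Main obstacle.} I expect the hardest step to be the descent in Proposition~\ref{SNC}, as used inside Corollary~\ref{main10}. The substitutions $x_j\mapsto x_j+\frac{1}{a_j}c_\alpha x^\alpha/(\cdots)$ must be performed over $R=\mathcal O_{V(\mathcal J)}$ localized at $p$ (after the étale Galois base change), not just at the generic point. The argument needs two things there: that SNC-resolvability at each stage holds over $R$, and that the formal limit of the coordinate changes gives parameters adapted to $E$ at $p$. The second point holds because the initial form is already split with linearly independent factors at $p$ by Proposition~\ref{main}(4). A secondary point to check is that the reduction via $Z$ respects the divisorial coordinates, so that the final NC structure is adapted to $E$.
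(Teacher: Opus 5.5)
Your proposal is correct and follows the paper's proof essentially step by step. The weight-one block of $\inv(\mathcal J)$ gives a maximal contact $Z=V(y_1,\ldots,y_r)$ with $\mathcal I_Z\subset\mathcal I$; Lemma~\ref{etale2}, used in both directions, reduces the claim to the codimension-one case on $Z$; Corollary~\ref{main10} then finishes, and your separate check of its hypotheses via Lemma~\ref{etale}, Proposition~\ref{main} and Proposition~\ref{main0} only makes explicit what the paper already packages inside the proof of that corollary.
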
\begin{proof}
By Lemma~\ref{etale2}, we have
\[
\inv_{\eta_{\mathcal J}}(\mathcal I)
=
\inv(\mathcal J)
=
(\underbrace{1,\ldots,1}_{r},
 \underbrace{d,\ldots,d}_{t},
 \underbrace{d_+,\ldots,d_+}_{k-r-t}).
\]

For any point $p\in V(\mathcal J)$ with
$\inv_p(\mathcal I)=\inv(\mathcal J)$,
the cotangent ideal $T(\mathcal I)$ contains
$r$ independent local parameters
$y_1,\ldots,y_r\in \mathcal I\subset \mathcal J$
forming a maximal contact.
Hence $\mathcal I$ contains the ideal
\[
\mathcal I_Z=(y_1,\ldots,y_r),
\]
defining a smooth subvariety $Z\subset X$.

By Lemma~\ref{etale2}, $\mathcal I$ is NC of codimension $r+1$
at $\eta_{\mathcal J}$ if and only if
$\mathcal I_{|Z}$ is NC of codimension $1$
at $\eta_{\mathcal J}\in Z$.
By Lemma~\ref{main10}, $\mathcal I_{|Z}$ is NC along
$V(\mathcal J)=V({\mathcal J|Z})$.
Applying Lemma~\ref{etale2} again, we conclude that
$\mathcal I$ is NC of codimension $r+1$ along $V(\mathcal J)$.
\end{proof}

\begin{example}
Consider the triple point
\[
  f = xyz + x^4 + y^4 + z^4.
\]
This singularity is \emph{not} NC: it is no SNC-\emph{resolvable} of order \(4\) as a power series.  
None of the minimal monomials \(x^4, y^4, z^4\) is divisible by any of the monomials \(xy, yz, xz\) appearing from the initial term.

\medskip
In contrast, any singularity of the form
\[
  f = xy + f(x,y), \qquad \ord_0(f) \geq 3,
\]

for instance

$$f=xy+x^3+y^3$$
\emph{is} NC, since it is always SNC-resolvable  in $\widehat{X}_0$ of order \(\rord_0(f)\).  
In this case, every monomial in \(f(x,y)\) occuring in the algorithm of SNC factorization is divisible by either \(x\) or \(y\), and hence can be eliminated by successive coordinate changes. 
Note that the algorithm and its factorization is only unique up to units. So the direct factorization algorithm may give us different results depending upon choices we made.
\end{example}

\section{Proofs of the main desingularization theorems}
\subsection{Proof of the Principalization Theorem~\ref{thm:princ-NC}}

\begin{proof}
Let $X_*^{\mathrm{nc}}(\mathcal I,E)\subset X$ denote the locus where
$\mathcal I$ is NC adapted to the divisor $E$.
If $\mathcal I$ is not NC, then
\[
Z:=X\setminus X_*^{\mathrm{nc}}(\mathcal I,E)
\]
is a nonempty closed subset.

The invariant $\inv$ is upper semicontinuous and attains its maximum on $Z$.
Let $\mathcal J$ be the maximal admissible center associated with this
maximal value of $\inv$.
By the Center Theorem~\ref{center},
\[
V(\mathcal J)\subset Z.
\]

Let $\sigma_1\colon X_1\to X$ be the weighted blow-up of $\mathcal J$,
and let $\mathcal I_1:=\sigma_1^c(\mathcal I)$ be the controlled transform.
Then the invariant $\inv$ strictly decreases over
\[
Z_1:=X_1\setminus X_{1*}^{\mathrm{nc}}(\mathcal I_1,E).
\]
Repeating the process, we eventually obtain $Z_k=\emptyset$,
so that $\mathcal I_k$ is NC on $X_k$.

If initially $\mathcal I=\mathcal I_F$ is an ideal defining
an NC divisor of codimension one (reduced or nonreduced) on an open set,
we apply the algorithm relative to $X_*^{\mathrm{nc}}(\mathcal I,E)$.
In the final stage, the total transform decomposes as
\[
\mathcal O_{X_k}\cdot\mathcal I
=
\mathcal I_D\cdot\mathcal I_k,
\]
where $\mathcal I_D$ defines the resulting NC divisor.
\end{proof}
\subsection{Proof of the Embedded NC-Preserving Resolution Theorem~\ref{thm:embedded-NC}}

\begin{proof}
Let $Z\subset X$ be a closed subscheme of a smooth variety $X$
with an SNC divisor $E$.
Consider the ideal $\mathcal I_Z$ and apply the principalization
algorithm described above to obtain a morphism
\[
\Pi\colon Y\to X.
\]
In this setting, one may use the controlled, weak, or strict transform
of the ideal.

The strict transform of $Z$ on $Y$ is then an NC subscheme
which has normal crossings with the SNC divisor
\[
D\cup \Pi^{s}(E),
\]
where $D$ denotes the exceptional divisor.
\end{proof}

\subsection{Proof of the Non-Embedded NC-Preserving Resolution Theorem~\ref{thm:nonembedded-NC}}

We follow the argument of \cite{W22}.  
A scheme of finite type over a field $k$ can be locally embedded as a
closed subscheme $Y\subset X=\mathbb A^n$ of a smooth variety.
Similarly, a locally affine Deligne--Mumford stack with presentation
$Y=[U/G]$, where $G$ is a finite group, admits a $G$-equivariant
closed immersion $U\hookrightarrow \mathbb A^n$ via a finite-dimensional
$k$-representation of $G$.  Hence $Y$ embeds into the smooth stack
$X=[\mathbb A^n/G]$.

This reduction allows us to apply the embedded
NC-preserving resolution (Theorem~\ref{thm:embedded-NC}).
Different local embeddings yield canonically isomorphic embedded
resolutions.  Indeed, two embeddings are locally related by
\'etale morphisms and closed immersions
\[
Y\hookrightarrow X_1\hookrightarrow X_2,
\]
where $\mathcal I_{X_1}\subset \mathcal I_Y$ corresponds to a
maximal contact.  In this situation,
\[
\inv_p(\mathcal I_{Y,X_2})
=
(1,\ldots,1,\inv_p(\mathcal I_{Y,X_1})).
\]
Thus, by functoriality, the invariant differs only by
leading entries equal to $1$ and is therefore independent
of the embedding.

For a fixed embedding $Y\subset X$ with $\dim X=n$,
write $\inv_p(\mathcal I_Y)=(b_1,\ldots,b_k)$.
Define $\widetilde{\inv}_p(Y)$ to be the equivalence class of
sequences $(b_1,\ldots,b_k)_n$, where
\[
(b_1,\ldots,b_k)_n
\sim
(\underbrace{1,\ldots,1}_{m},b_1,\ldots,b_k)_{n+m}.
\]
This invariant is functorial and independent of the chosen embedding.
Consequently, the non-embedded resolution, defined via the invariant
$\widetilde{\inv}$, is canonical at the level of strict transforms.
\subsection{Proof of the NC-Completion Theorem~\ref{NC-c}}

By Rydh's compactification theorem~\cite{Ryd14}, any separated
Deligne--Mumford stack $X$, in particular one with normal crossing
singularities, admits a compactification. That is, $X$ embeds as an
open dense substack of a proper Deligne--Mumford stack
\[
X \subset \overline{X}.
\]
Applying the non-embedded NC-preserving resolution
(Theorem~\ref{thm:nonembedded-NC}) to $\overline{X}$,
we obtain a proper Deligne--Mumford stack with normal crossings,
yielding the desired NC-compactification.\section{Examples-Cyclic Points of Bierstone--Milman--Pacheco}

\subsection{Cyclic and Ramified Singularities}
Recall that the pinch point
\[
x^2 - y^2 z = 0
\]
splits after passing to the finite ramified Galois extension
\[
x^2 - y^2 z = (x + \sqrt{z}\,y)(x - \sqrt{z}\,y),
\]
with Galois group $\mathbb Z_2$.  

This phenomenon motivated Bierstone--Milman--Pacheco~\cite{BDMP14} 
to introduce the class of cyclic singularities $\mathrm{cp}(n)$, 
which arise naturally in resolution processes (except in the NC case) 
and serve as fundamental examples of minimal singularities.

Such singularities are governed by their splitting forms coming from 
ramified Galois extensions of normal crossings singularities at maximal 
admissible centers.  
For instance, for $F=x^2-y^2z$, the center $(x,y)^2$ is maximal admissible 
on $z\neq 0$ but not at the origin.  
The corresponding extension
\[
K[x,y,z]\subset K[x,y,z^{1/2}]
\]
ramifies precisely at the pinch points, where the linear factors become 
linearly dependent.

The ramification locus cannot be removed by smooth blow-ups or ordinary 
birational transformations, but it can be resolved by suitable weighted 
blow-ups.  
For example, the weighted blow-up with weights $(2,3,3)$ eliminates the 
pinch point, whereas smooth blow-ups leave its local form unchanged.  
In particular, resolution by smooth centers eventually forces a center 
intersecting the NC locus, thereby altering it, while weighted admissible 
centers eliminate the non-NC singularity without modifying the NC locus.

Every minimal singularity of this type is controlled by its associated 
splitting form arising from a ramified Galois extension.
\begin{definition}[Bierstone--Milman--Pacheco~\cite{BDMP14}]
The \emph{cyclic singularity} $\mathrm{cp}(n)$ of order $n$ is defined by
\[
\Delta_n(x_0,\ldots,x_{n-1},z)
= \prod_{\ell=0}^{n-1}
\Bigl(
x_0 + \epsilon^{\ell} z^{1/n}x_1 + \cdots 
+ \epsilon^{(n-1)\ell} z^{(n-1)/n}x_{n-1}
\Bigr),
\]
where $\epsilon=e^{2\pi i/n}$.
\end{definition}

The associated Galois extension
\[
K(z)\subset K(z^{1/n})
\]
has group $\Gal(K(z^{1/n})/K(z))\simeq\mathbb Z_n$, 
generated by $z^{1/n}\mapsto \epsilon z^{1/n}$. 
This cyclic action permutes the linear factors of $\Delta_n$.

The corresponding splitting cover is
\[
X_F=\Spec_X(\mathcal O_X[z^{1/n}]).
\]

The weighted center $\mathcal J=(x_0,\ldots,x_{n-1})^n$ 
is admissible for $\Delta_n$ precisely along the NC locus
\[
V(\mathcal J)\setminus V(x_0,\ldots,x_{n-1},z),
\]
where the linear forms are independent.

\bibliographystyle{amsalpha}


\end{document}